\documentclass[a4paper,11pt]{article}
\usepackage{amsmath,amssymb,amsthm}
\usepackage{cite}
\usepackage{eucal}
\usepackage{graphicx}
\usepackage{microtype}
\usepackage[colorlinks=true,linkcolor=blue,citecolor=blue,urlcolor=blue]{hyperref}
\usepackage{iftex}
\ifPDFTeX
  \input{glyphtounicode}
\fi
\allowdisplaybreaks

\hypersetup{
  pdftitle={Contracting Transport Maps on Riemannian Manifolds},
  pdfauthor={Shrey Aryan, Bang-Xian Han and Zhuo-Nan Zhu},
  pdfsubject={Measure-preserving contractions on Riemannian manifolds via inverse mean curvature flow},
  pdfkeywords={measure-preserving contractions,
    inverse mean curvature flow, convex hypersurfaces,
    free-boundary hypersurfaces, Alexandrov surfaces,
    weighted spherical contractions, conformal doubling,
    spectral comparison}
}

\numberwithin{equation}{section}
\theoremstyle{plain}
\newtheorem{theorem}{Theorem}[section]
\newtheorem{corollary}[theorem]{Corollary}
\newtheorem{proposition}[theorem]{Proposition}
\newtheorem{lemma}[theorem]{Lemma}
\newtheorem*{conjecture}{Conjecture}
\theoremstyle{remark}
\newtheorem{remark}[theorem]{Remark}

\newcommand{\B}{\mathbb B}
\newcommand{\Sph}{\mathbb S}
\newcommand{\Lip}{\operatorname{Lip}}
\newcommand{\vol}{\operatorname{vol}}
\newcommand{\Hh}{\mathcal H}
\newcommand{\R}{\mathbb R}
\newcommand{\norm}[1]{\left\lVert #1\right\rVert}
\newcommand{\dist}{\mathsf d}
\renewcommand{\d}{\,\mathrm d}

\title{ \bf Contracting Transport Maps on Riemannian Manifolds}
\author{
Shrey Aryan\thanks{Department of Mathematics, Massachusetts Institute of
Technology, 77 Massachusetts Avenue, Cambridge, MA 02139-4307, USA.
Email: shrey183@mit.edu}\and
Bang-Xian Han\thanks{School of Mathematics, Shandong University,
Jinan 250100, China. Email: hanbx@sdu.edu.cn}
\and Zhuo-Nan Zhu\thanks{School of Mathematical Sciences, University of
Science and Technology of China, Hefei 230026, China. Email:
zhuonanzhu@mail.ustc.edu.cn}}
\date{}

\begin{document}

\maketitle
\begin{abstract} 
We use inverse mean curvature flow to construct bi-Lipschitz maps that preserve normalized volume and decrease distances.  These maps send a round sphere onto any smooth closed strictly convex hypersurface in a sphere and a flat disk onto any smooth strictly convex free-boundary disk in a Euclidean ball.

In dimension two, this proves a conjecture of E.~Milman for every smooth two-sphere with Gaussian curvature at least one and gives an analogous result for nonnegatively curved disks whose boundary has geodesic curvature one.  The spherical result proves the two-dimensional case of the spectral comparison conjectured by Colding and Minicozzi.  Counterexamples in dimensions $n\geq3$ show that the restriction to dimension two is sharp.

Furthermore, an equivariant extension of this construction yields, for every $n\geq2$, a contracting transport map from the uniform probability measure on a round hemisphere to the uniform probability measure on any closed geodesically convex subset of positive volume. This settles the remaining uniform-target case of a question raised by Beck and Jerison. In dimension two, we also find geometric conditions under which the uniform measure on the hemisphere can be transported by a contracting map to a broad class of nonuniform probability measures supported on domains in a hemisphere.
\end{abstract}

\textbf{Keywords}:
measure-preserving contractions, inverse
mean curvature flow, convex hypersurfaces, free-boundary hypersurfaces,
Alexandrov surfaces, weighted spherical contractions, conformal doubling,
spectral comparison

\textbf{MSC 2020}: Primary 53C21, 53E10;
Secondary 49Q22, 53C45, 58J50

\setcounter{tocdepth}{1}
\begingroup
\setlength{\parskip}{0pt}
\tableofcontents
\endgroup

\clearpage
\section{Introduction}
\subsection*{Motivation and background}
Caffarelli's contraction theorem states that the transport map minimizing the quadratic distance cost from the standard Gaussian measure to a more log-concave probability measure is $1$-Lipschitz \cite{Caffarelli}.  Consequently, many functional and geometric inequalities for the Gaussian measure transfer to more log-concave measures.  A similar principle appears in comparison geometry: functional and geometric inequalities on model spaces, such as the round sphere, extend to manifolds satisfying appropriate curvature lower bounds.  The standard Gaussian space is a model for the curvature-dimension condition $\mathrm{CD}(1,\infty)$, just as the round sphere $\Sph^n$ is a model for
Riemannian manifolds satisfying $\operatorname{Ric}\geq(n-1)g$. Motivated by this analogy, E.~Milman proposed the following conjecture \cite[Conjecture~4]{Milman}.
\begin{conjecture}[E. Milman]
Let $(M^n,g)$ be a smooth Riemannian manifold diffeomorphic to $\Sph^n$ and
suppose that $\operatorname{Ric}_g\geq(n-1)g$.  Then there exists a map
\[
 T:(\Sph^n,g_{\mathrm{can}})\longrightarrow(M,g)
\]
such that
\[
 T_\#\frac{\d\vol_{\mathrm{can}}}{|\Sph^n|}
 =\frac{\d\vol_g}{\vol_g(M)},
 \qquad \Lip(T)\leq1.
\]
\end{conjecture}
A single measure-preserving contraction implies several classical comparison
results.  To see this, write
$\sigma_n:=\d\vol_{\mathrm{can}}/|\Sph^n|$ and
$\mu_g:=\d\vol_g/\vol_g(M)$.
If $T_\#\sigma_n=\mu_g$ and $\Lip(T)\leq1$, then the normalized round sphere
dominates $(M,g,\mu_g)$ in Gromov's metric--measure Lipschitz order
\cite{Gromov}.  Moreover, surjectivity and the contraction property give
\[
 \operatorname{diam}(M,g)\leq\pi,
 \qquad
 \mu_g\bigl(B_r(Tx)\bigr)\geq\sigma_n\bigl(B_r(x)\bigr),
 \qquad x\in\Sph^n, r>0.
\]
These are the Bonnet--Myers diameter bound and the normalized Bishop volume
comparison.  For every Borel set $A\subseteq M$,
\[
 \bigl(T^{-1}(A)\bigr)_r\subseteq T^{-1}(A_r),
\]
where the two $r$-neighborhoods are taken in the source and target,
respectively.  Hence concentration inequalities and, through Minkowski
boundary measures, the L\'evy--Gromov isoperimetric comparison pass from the
round sphere to $M$.  Pullback of functions gives Poincar\'e and
log-Sobolev inequalities and, by the min--max principle, spectral estimates
such as the Lichnerowicz bound $\lambda_1(M,g)\geq n$.  Thus the map in
Milman's conjecture implies the Bonnet--Myers, normalized Bishop,
L\'evy--Gromov, and Lichnerowicz comparisons simultaneously; see
\cite{Milman,Serres}.

Previous affirmative results on E. Milman's conjecture were perturbative.
Serres proved the conjecture in dimension two for metrics sufficiently close to a
round metric of radius $\rho<1$, using Ricci flow and the Kim--Milman
construction \cite{KimMilman,Serres}; Ge and Serres extended this perturbative
result to arbitrary dimensions using the Brenier--McCann map \cite{GeSerres}.
The allowed perturbation size degenerates as $\rho\to 1$.

The spectral consequences of a contraction give an obstruction in higher
dimensions.  The first author constructed counterexamples in
dimensions $n\geq4$ \cite{Aryan}, and Lin, Wang, and Xu did so in dimension
three \cite[Section~16]{LinWangXu}.
Lin, Wang, and Xu proved the corresponding spectral comparison and rigidity
theorem for smooth metrics on $\Sph^2$ with Gaussian curvature at least one
\cite[Theorem~1.8]{LinWangXu}, while the existence of a contraction preserving
normalized volume remained open.

\subsection*{Main results} 

Among dimensions $n\geq2$, the higher-dimensional counterexamples leave
dimension two as the only one in which E. Milman's conjecture can hold in full
generality.  In this dimension, $K_g\geq1$ is precisely the Ricci lower bound
in the conjecture, and the spherical Weyl theorem makes the extrinsic flow
construction available.  This gives the following intrinsic theorem.
\begin{theorem}[E. Milman's conjecture for two-spheres]
\label{thm:milman}
Let $(M^2,g)$ be a smooth Riemannian manifold diffeomorphic to $\Sph^2$ and
assume $K_g\geq1$.  Then there is a bi-Lipschitz homeomorphism
\[
 T:(\Sph^2,g_{\mathrm{can}})\longrightarrow(M,g)
\]
such that
\[
 T_\#\frac{\d\vol_{\mathrm{can}}}{|\Sph^2|}
 =\frac{\d\vol_g}{\vol_g(M)},\qquad L:=\Lip(T)\leq1.
\]
With $a=\vol_g(M)/|\Sph^2|$,
\begin{equation}\label{eq:intro-milman-bilip}
 \frac{a}{L}\,\dist_{\mathrm{can}}(z,z')
 \leq \dist_g(Tz,Tz')
 \leq L\dist_{\mathrm{can}}(z,z'),
 \qquad \forall\,z,z'\in\Sph^2.
\end{equation}
If $K_g>1$, the map may be chosen with $L<1$.
\end{theorem}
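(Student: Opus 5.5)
The plan is to realize $(M^2,g)$ extrinsically and then flow. By the spherical Weyl embedding theorem (Pogorelov's solution of the Weyl problem in $\Sph^3$), a smooth metric on $\Sph^2$ with $K_g>1$ admits a smooth isometric embedding into $\Sph^3$ as a closed strictly convex hypersurface $\Sigma$; by Gauss, $K_g=1+\kappa_1\kappa_2$ with $\kappa_1\kappa_2>0$. For $K_g\ge1$ I first treat $K_g>1$. I then approximate $g$ by metrics $g_\varepsilon$ with $K_{g_\varepsilon}>1$, for instance by scaling $g_\varepsilon=(1+\varepsilon)^{-2}g$ so that $K_{g_\varepsilon}\ge(1+\varepsilon)^2>1$. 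At the end I pass to a limit with Arzel\`a--Ascoli, using uniform Lipschitz bounds. The lower bilipschitz bound in \eqref{eq:intro-milman-bilip} has to survive this limit, so I intend to derive it in a form stable under uniform convergence.

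\textbf{Construction for $\Sigma$ strictly convex in $\Sph^3$.} I run inverse mean curvature flow $\partial_t X=H^{-1}\nu$ in $\Sph^3$ starting from $\Sigma$. Gerhardt's theorem for IMCF in the sphere states that a strictly convex initial surface stays strictly convex, exists on a finite interval, and converges (after the natural rescaling) to an equator, i.e.\ a totally geodesic round $\Sph^2$. The area satisfies $\frac{d}{dt}|\Sigma_t|=|\Sigma_t|$, so $|\Sigma_t|=e^t|\Sigma_0|$, and the normal flow map $\Phi_t:\Sigma_0\to\Sigma_t$ has area Jacobian exactly $e^t$. The normalized area is therefore preserved pointwise, and the inverse of the limiting flow map, $T:=\Phi_\infty^{-1}:\Sph^2\to\Sigma\cong M$, pushes $\sigma_2$ to $\mu_g$. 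This is where IMCF is essential: it is the only curvature flow whose Jacobian is spatially constant.

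\textbf{Contraction estimate (main obstacle).} I need $\Lip(T)\le1$, which amounts to the flow map expanding lengths. The evolution of the induced metric is $\partial_t g_t=2H^{-1}h_t$, so $\Phi_t$ expands all directions. For a length-$1$ Lipschitz bound in the limit, however, I need the intrinsic distances on $\Sigma_t$, compared against the final round sphere, never to shrink more than they grow. Because the area only scales by $e^{t}$ overall, anisotropy of $h/H$ could make one direction grow by less than $e^{t/2}$. The normalized map must therefore be handled carefully. The quantity to control is the eigenvalue ratio: I need $h_t/H\ge$ something integrating to the correct asymptotic scale. I would instead compare with the round metric. Writing $g_\infty=\lim c(t)\,g_t$ for the round limit, I would show $g_\infty\ge g_0$ on $\Sigma_0$, i.e.\ the limit metric pulled back dominates $g$. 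A maximum-principle argument on $\lambda_{\min}(h)/H$, combined with the Gauss equation relating $\det h$ to $K-1$, should give monotonicity of $\log\lambda_{\min}(g_t/g_0)-t/2$ or a similar quantity. The strict version yields $L<1$ when $K>1$.

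\textbf{Lower bound.} The upper and lower bounds follow from the Jacobian identity: the singular values $s_1\le s_2\le L$ of $dT$ satisfy $s_1s_2=a$, so $s_1\ge a/L$. In the limit, a $2$D Lipschitz homeomorphism whose Jacobian equals $a$ a.e.\ and whose $\Lip$ is at most $L$ has an inverse with $\Lip\le L/a$ on the length spaces. This gives \eqref{eq:intro-milman-bilip}, and the estimate persists under approximation because it is a pointwise singular-value bound.
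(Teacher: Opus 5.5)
The gap is in the contraction step. You flag it as the main obstacle and leave it to an unproved maximum-principle argument for $\lambda_{\min}(h)/H$.

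The difficulty you see comes from assuming that the round limit is reached only after a normalization $g_\infty=\lim c(t)\,g_t$. In $\Sph^3$ no rescaling occurs. By Gerhardt and Makowski--Scheuer \cite{GerhardtSphere,MakowskiScheuer}, the flow from a strictly convex surface lives on a finite interval $[0,\tau)$. For $t$ near $\tau$, the surfaces $\Sigma_t$ are radial graphs $r=u_t$ over a fixed great sphere, with $u_t\to\pi/2$ in $C^{1,\beta}$. Hence the metrics $q_t:=Q_t^*g_{\Sigma_t}$ of the radial parametrizations $Q_t:\Sph^2\to\Sigma_t$ converge uniformly to $g_{\mathrm{can}}$ itself. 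Also $|\Sigma_t|=e^t|\Sigma_0|\to|\Sph^2|$ gives $e^{-\tau}=a$.

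With this, the fact you already wrote down finishes the argument on its own: $\partial_tg_t=2H^{-1}h_t>0$, hence $g_t\geq g_0$. Then $T_t:=X_t^{-1}\circ Q_t$ satisfies $T_t^*g_0\leq T_t^*g_t=q_t$. So $T_t$ is $1$-Lipschitz from $(\Sph^2,q_t)$ to $(M,g)$ and preserves normalized volume. A uniform subsequential limit as $t\uparrow\tau$ is then $1$-Lipschitz from $(\Sph^2,g_{\mathrm{can}})$.

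The anisotropy of $h/H$ plays no role, and the monotonicity of $\log\lambda_{\min}(g_t/g_0)-t/2$ you propose is neither needed nor proved. Your worry would be legitimate for Euclidean IMCF, where roundness appears only after the rescaling $e^{-t/n}$ and monotonicity of $g_t$ alone gives nothing useful. The spherical setting is used precisely because the terminal equator already has the correct size.

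Likewise, ``the strict version yields $L<1$'' needs an actual argument. For fixed $s\in(0,\tau)$, $g_s-g_0=\int_0^s2H_r^{-1}h_r\,\d r$ is positive definite on the compact surface. So $g_s\geq(1+\delta)g_0$ for some $\delta>0$, and $L\leq(1+\delta)^{-1/2}$.

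Two smaller points:

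\emph{The limiting flow map.} The map $\Phi_\infty$ is not automatically defined, since $H\to0$ and the normal speed $H^{-1}$ blows up at $\tau$. Composing with $Q_t$ and taking an Arzel\`a--Ascoli limit avoids having to show that the Lagrangian flow converges.

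\emph{The lower bound.} A pointwise singular-value bound does not survive uniform convergence. What survives is the metric inequality $a_\varepsilon\dist_{\mathrm{can}}(z,z')\leq\dist_{g_\varepsilon}(T_\varepsilon z,T_\varepsilon z')$, obtained from the Jacobian argument at each stage using $L_\varepsilon\leq1$. This makes the limit injective, and the pushforward identity makes it surjective. Only after that do you apply the Jacobian/singular-value argument to the limit map, with its own constant $L=\Lip(T)$, to obtain \eqref{eq:intro-milman-bilip}.
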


The map in Theorem~\ref{thm:milman} yields the following quantitative
refinement of the spectral comparison of Lin, Wang, and Xu \cite{LinWangXu}.  If
$0=\lambda_0(M,g)\leq\lambda_1(M,g)\leq\cdots$, then
\begin{equation}
 L^{-2}\lambda_k(\Sph^2,g_{\mathrm{can}})
 \leq \lambda_k(M,g)
 \leq \left(\frac{L}{a}\right)^2
 \lambda_k(\Sph^2,g_{\mathrm{can}}),
 \qquad k\geq0.
\end{equation}

Since $L\leq1$, the lower estimate implies the usual comparison
$\lambda_k(M,g)\geq\lambda_k(\Sph^2,g_{\mathrm{can}})$ and is stronger when
$L<1$.  We also prove rigidity for the lower estimate: equality for some
$k\geq1$ holds if and only if $a=L^2$; in this case
$T:(\Sph^2,L^2g_{\mathrm{can}})\longrightarrow(M,g)$ is an isometry.  This
rigidity statement holds for every Lipschitz map from the
round sphere to a smooth Riemannian two-sphere that preserves normalized
volume.  In particular, Theorem~\ref{thm:spectral-rigidity} recovers the smooth
spectral rigidity theorem of Lin, Wang, and Xu
\cite[Theorem~1.8]{LinWangXu}.  It also settles the two-dimensional case of
the spectral comparison conjectured by Colding and Minicozzi in \cite{CM}.
The upper estimate and its equality case for bi-Lipschitz maps are discussed
in Remark~\ref{rem:spectral-upper-rigidity}.

The intrinsic theorem above is deduced from an all-dimensional extrinsic
result by inverse mean curvature flow.  Let $X_t$
solve $\partial_tX=H_t^{-1}\nu_t$, and let $g_t$ and $\d\vol_t$ denote the
pullbacks of the induced metric and volume measure to the parameter manifold.
Strict convexity and the first-variation formulas give
\begin{equation}
 \partial_t g_t=2H_t^{-1}h_t>0,
 \qquad
 \partial_t\d\vol_t=\d\vol_t.
\end{equation}
For $t>s$, the backward Lagrangian map
\[
 \Phi_{t,s}:=X_s\circ X_t^{-1}:\Sigma_t\longrightarrow\Sigma_s
\]
is therefore $1$-Lipschitz and preserves normalized volume.  In the closed
spherical setting the terminal model is an equator, whereas in the
free-boundary Euclidean setting it is a flat disk.  Passing the backward maps
to these terminal models yields the following two contractions.

\begin{theorem}[Extrinsic IMCF contractions]
\label{thm:extrinsic-unified}
Let $n\geq2$.
\begin{enumerate}
\item If $\Sigma^n\subseteq\Sph^{n+1}$ is a smooth, closed, connected,
embedded, strictly convex hypersurface, then there exists a bi-Lipschitz homeomorphism
\[
 T:(\Sph^n,g_{\mathrm{can}})\longrightarrow(\Sigma,g_\Sigma)
\]
such that
\[
 T_\#\frac{\d\vol_{\mathrm{can}}}{|\Sph^n|}
 =\frac{\d\vol_\Sigma}{|\Sigma|},
 \qquad L:=\Lip(T)<1.
\]
With
$b=|\Sigma|/|\Sph^n|$,
\begin{equation}\label{eq:intro-closed-bilip}
 \frac{b}{L^{n-1}}\dist_{\mathrm{can}}(z,z')
 \leq \dist_\Sigma(Tz,Tz')
 \leq L\dist_{\mathrm{can}}(z,z'),
 \qquad \forall\,z,z'\in\Sph^n.
\end{equation}
\item If $\Sigma^n\subseteq\overline{\B^{n+1}}$ is a smooth, compact,
connected, embedded, strictly convex hypersurface of disk type with free
boundary on $\Sph^n=\partial\overline{\B^{n+1}}$, then there exists a bi-Lipschitz
homeomorphism
\[
 T:(\overline{\B^n},g_{\mathrm{Euc}})\longrightarrow(\Sigma,g_\Sigma)
\]
such that
\[
 T_\#\frac{\d x}{\vol(\B^n)}
 =\frac{\d\vol_\Sigma}{|\Sigma|},
 \qquad L:=\Lip(T)<1.
\]
With
$b=|\Sigma|/\omega_n$, $\omega_n=\vol(\B^n)$,
\begin{equation}
 \frac{b}{L^{n-1}}|z-z'|
 \leq \dist_\Sigma(Tz,Tz')
 \leq L|z-z'|,
 \qquad \forall\,z,z'\in\overline{\B^n}.
\end{equation}
\end{enumerate}
\end{theorem}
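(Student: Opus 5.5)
Starting from $\Sigma_0=\Sigma$, I will run inverse mean curvature flow $\partial_tX=H_t^{-1}\nu_t$ and read the map $T$ off as the limit of the backward maps $\Phi_{t,0}$ composed with the identification of $\Sigma_t$ with the terminal model.

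In case (1), by Gerhardt's theorem on IMCF of strictly convex hypersurfaces in $\Sph^{n+1}$ (and Makowski--Scheuer), the flow exists on a finite interval $[0,T^*)$. It stays smooth and strictly convex, and it converges to a totally geodesic equator. After rescaling, the induced metrics $g_t$ pulled back to $\Sph^n$ converge smoothly to $g_{\mathrm{can}}$; at the maximal time the limit has area $|\Sph^n|$. Since $\partial_t\d\vol_t=\d\vol_t$, we get $|\Sigma_t|=e^t|\Sigma|$, hence $e^{T^*}=1/b$.

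In case (2), I will use the free-boundary IMCF in the ball (Lambert--Scheuer). Strictly convex free-boundary disks stay strictly convex and converge to a flat equatorial disk $\overline{\B^n}$. The Neumann condition $\langle\nu_t,x\rangle=0$ on $\partial\Sigma_t$ keeps the boundary on $\Sph^n$, so the first-variation formulas $\partial_tg_t=2H_t^{-1}h_t$ and $\partial_t\d\vol_t=\d\vol_t$ hold with no boundary terms in the volume identity.

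Fix a parametrization $X:[0,T^*)\times N\to$ ambient, with $N=\Sph^n$ or $\overline{\B^n}$. Define $T:=X_0\circ X_{T^*}^{-1}$, where $X_{T^*}$ is the limit embedding onto the model and $g_{T^*}:=\lim g_t$ is isometric to the model metric. I will carry out three steps.

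\textbf{Volume.} The density $\d\vol_t/\d\vol_0=e^t$ is spatially constant. Hence the pushforward of normalized volume under $\Phi_{t,0}$ is normalized volume, and passing to the limit gives $T_\#$ of the normalized model measure equal to $\d\vol_\Sigma/|\Sigma|$.

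\textbf{Contraction.} Because $h_t>0$, the metrics $g_t$ increase in $t$, so $g_0\le g_{T^*}$ as quadratic forms on $N$ and $\Lip(T)\le1$. To get the strict bound $L<1$, I will integrate instead:
\[
g_{T^*}=g_0+\int_0^{T^*}2H_t^{-1}h_t\,\d t\ \ge\ (1+c)\,g_0 .
\]
This needs a uniform positive lower bound on $h_t/H_t$ relative to $g_t$ on a short time interval, which holds because $\Sigma_0$ is strictly convex and compact. A lower bound $h_t\ge\epsilon H_tg_t$ for $t\in[0,\delta]$ gives $g_\delta\ge e^{2\epsilon\delta}g_0$, and monotonicity afterwards gives $L\le e^{-\epsilon\delta}<1$. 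Distances are lengths of curves, so these metric inequalities pass to $\dist_\Sigma$ and $\dist_{\mathrm{can}}$. In case (2), geodesics of $\overline{\B^n}$ are straight segments and curves stay in $N$, so the same length argument applies.

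\textbf{Lower bi-Lipschitz bound.} Let $\lambda_1\le\dots\le\lambda_n$ be the eigenvalues of $g_0$ with respect to $g_{T^*}$ at a point. From $L<1$ we have $\lambda_i\le L^2$. The volume identity gives $\prod\lambda_i^{1/2}=\d\vol_0/\d\vol_{T^*}=b$. Therefore
\[
\lambda_1^{1/2}=\frac{b}{\prod_{i\ge2}\lambda_i^{1/2}}\ge \frac{b}{L^{n-1}} ,
\]
so $g_0\ge (b/L^{n-1})^2g_{T^*}$. Integrating along curves then yields the stated lower estimate on $\dist_\Sigma(Tz,Tz')$. Together with the upper bound this makes $T$ a bi-Lipschitz homeomorphism.

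The main obstacle is the convergence at the terminal time. The forms $g_t$ must converge (at least in $C^0$) to a metric isometric to the model, with $X_t$ converging to an embedding, so that $T$ is defined and the inequalities pass to the limit. In the spherical case this is the delicate part of Gerhardt's analysis: $H\to0$ as the flow approaches the equator, and one needs the metrics themselves, not only the shapes, to converge without rescaling. If only convergence after reparametrization by tangential diffeomorphisms is available, I would first pass to a subsequential $C^0$ limit using the uniform bound $g_0\le g_t\le b^{-2/n}$-controlled bounds, which come from the monotonicity and the volume constraint. I would then identify the limit metric as the model via the convergence of $\Sigma_t$ to the equator or flat disk.
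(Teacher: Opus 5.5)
Your overall route is the paper's. Strict convexity gives $g_t\ge g_s$, the density $\d\vol_t=e^t\d\vol_0$ is spatially constant, a fixed positive time gives $g_s\ge(1+\delta)g_0$, and the lower bound comes from singular values at most $L$ with product $b$. The gap is the terminal-time step you flag yourself, and the proposal does not close it.

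The available convergence (Makowski--Scheuer, Lambert--Scheuer) is $C^{1,\beta}$ convergence of the graph functions $u_t$ in a radial, resp.\ M\"obius, graph parametrization $Q_t$ of $\Sigma_t$. This gives $q_t:=Q_t^*g_{\Sigma_t}\to g_{\mathrm{model}}$ uniformly. It says nothing about the normal-flow parametrization $X_t$, whose speed $H_t^{-1}$ blows up as the hypersurfaces flatten. So the following are not available:
\begin{itemize}
\item a limit embedding $X_{T^*}$ onto the model;
\item a limit $g_{T^*}=\lim_t g_t$ that is a Riemannian metric isometric to the model;
\item smooth convergence of $g_t$.
\end{itemize}

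Your fallback does not repair this. Monotonicity plus $\d\vol_t=e^t\d\vol_0$ give $g_0\le g_t\le e^{2t}g_0$ (the constant is $b^{-2}$, not $b^{-2/n}$). That yields only a bounded, pointwise, merely measurable limit of $g_t$. There is no equicontinuity, hence no $C^0$ limit of the metrics. Moreover, ``identifying the limit metric with the model'' is exactly the unknown relation between $X_t$ and $Q_t$. As a result, four steps are unjustified: $\Lip(T)\le1$ from $g_0\le g_{T^*}$; the strict bound; the bijectivity of $T$; and integrating $g_0\ge(b/L^{n-1})^2g_{T^*}$ along curves.

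The missing idea is to pass to the limit in maps rather than in metrics, keeping all differential information at finite times, where everything is smooth. Put $T_t:=X_t^{-1}\circ Q_t$ from the model to $\Sigma_0$. Then $T_t^*g_0\le T_t^*g_t=q_t$, and $T_t$ has constant Jacobian $e^{-t}$ relative to $q_t$. Your singular-value computation, applied to the diffeomorphism $T_t$, therefore gives
$e^{-t}\dist_{q_t}(x,y)\le\dist_{g_0}(T_tx,T_ty)\le\dist_{q_t}(x,y)$.
For $t\ge s$ one also has $T_t^*g_0\le(1+\delta)^{-1}q_t$.

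Since $\dist_{q_t}\to\dist_{\mathrm{model}}$ uniformly, Arzel\`a--Ascoli yields a uniform subsequential limit $T$ with these properties:
\begin{itemize}
\item it is $(1+\delta)^{-1/2}$-Lipschitz;
\item it pushes normalized model volume to $\d\vol_\Sigma/|\Sigma|$;
\item it is injective by the limiting bound $b\,\dist_{\mathrm{model}}(x,y)\le\dist_{g_0}(Tx,Ty)$;
\item it is surjective because the target measure has full support.
\end{itemize}
This is Proposition~\ref{prop:endpoint-contraction}.

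The sharp constant $b/L^{n-1}$ must then be derived for a map that is only bi-Lipschitz. The area formula gives $J_T=b$ almost everywhere, and the singular-value bound gives $\norm{\d(T^{-1})}\le L^{n-1}/b$ almost everywhere. A $W^{1,\infty}$-to-Lipschitz argument (Lemma~\ref{lem:inverse-bound}) then turns this into a distance bound. A pointwise inequality between metrics that holds only almost everywhere cannot simply be integrated along curves.
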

The existence and terminal convergence of the relevant flows are due to
Makowski and Scheuer in the closed spherical case \cite{MakowskiScheuer}
and to Lambert and Scheuer in the free-boundary case
\cite{LambertScheuer}.  The endpoint
argument and the bi-Lipschitz lower bound are established in
Section~\ref{sec:mm-flow}.

The intrinsic results follow by combining
Theorem~\ref{thm:extrinsic-unified} with the Weyl embedding theorems.
Theorem~\ref{thm:milman} follows from the closed part of
Theorem~\ref{thm:extrinsic-unified} and Lu's spherical Weyl embedding theorem
\cite{Lu}.  The free-boundary part of
Theorem~\ref{thm:extrinsic-unified} has the following intrinsic counterpart,
based on Koerber's solution of the free-boundary Weyl problem \cite{Koerber}.

\begin{theorem}[Nonnegatively curved disks]\label{thm:disk}
Let $g$ be a smooth Riemannian metric on $\overline{\B^2}$.  Suppose
$K_g\geq0$ on $\overline{\B^2}$ and
$k_{\partial\overline{\B^2}}=1$,
where
$k_{\partial\overline{\B^2}}=-g(\nabla_\xi\xi,\eta)$, with $\xi$ a unit
tangent and $\eta$ the outward unit conormal.
Then there exists a bi-Lipschitz homeomorphism
\[
 T:(\overline{\B^2},g_{\mathrm{Euc}})
 \longrightarrow(\overline{\B^2},g)
\]
such that
\[
 T_\#\frac{\d x}{\omega_2}
 =\frac{\d\vol_g}{\vol_g(\B^2)},
 \qquad L:=\Lip(T)\leq1.
\]
With $a=\vol_g(\B^2)/\omega_2$,
\begin{equation}
 \frac{a}{L}|z-z'|\leq \dist_g(Tz,Tz')\leq L|z-z'|,
 \qquad \forall\,z,z'\in\overline{\B^2}.
\end{equation}
If $K_g>0$, the map may be chosen with $L<1$.
In particular,
\begin{equation}
 \lambda_k^{D/N}(\B^2,g)
 \geq L^{-2}\lambda_k^{D/N}(\B^2,g_{\mathrm{Euc}})
\end{equation}
for all $k\geq1$ in the Dirichlet case and all $k\geq0$ in the Neumann case.
\end{theorem}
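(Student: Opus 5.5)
The plan is to reduce Theorem~\ref{thm:disk} to the free-boundary part of Theorem~\ref{thm:extrinsic-unified}, using Koerber's solution of the free-boundary Weyl problem \cite{Koerber}. Koerber's theorem gives an isometric embedding of a metric on $\overline{\B^2}$ with $K_g>0$ and $k_{\partial\overline{\B^2}}=1$ into $\overline{\B^3}$. The image is a strictly convex free-boundary disk: the boundary lies on $\Sph^2$ and meets it orthogonally. The geodesic curvature condition $k=1$ is exactly what is forced for a curve on the unit sphere with orthogonal contact, since $\nu_{\Sph^2}$ is then the conormal. Gauss' equation $K=\det h$ together with $K>0$ gives strict convexity once orientation is chosen. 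Applying part~2 of Theorem~\ref{thm:extrinsic-unified} with $n=2$ then yields $T$ with $L<1$ and the bi-Lipschitz bounds with $b=a$, because $L^{n-1}=L$ and intrinsic distance on $\Sigma$ coincides with $\dist_g$ under the isometry.

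For the degenerate case $K_g\ge0$, I would approximate. First I would construct metrics $g_\varepsilon\to g$ in $C^\infty$ with $K_{g_\varepsilon}>0$ and boundary geodesic curvature $1$. One candidate is a conformal change $g_\varepsilon=e^{2u_\varepsilon}g$ with $u_\varepsilon$ small, satisfying $\partial_\eta u_\varepsilon=0$ and $u_\varepsilon=0$ on the boundary. Using $K_{g_\varepsilon}=e^{-2u}(K_g-\Delta_g u)$ and $k_\varepsilon=e^{-u}(k+\partial_\eta u)$, I would choose $-\Delta_g u_\varepsilon=\varepsilon\phi$ with $\phi>0$, adjusting by a compactly supported correction so that both boundary conditions hold. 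A cleaner option might be a Gauss–Bonnet-compatible construction, or rather than that, an interpolation with the flat disk metric via the conformal doubling mentioned in the keywords.

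Next I would obtain maps $T_\varepsilon$ with $\Lip\le L_\varepsilon<1$ and lower bounds $(a_\varepsilon/L_\varepsilon)|z-z'|$. Here $a_\varepsilon\to a>0$, and $L_\varepsilon\le1$ from the extrinsic statement. Viewing the $T_\varepsilon$ as maps into the fixed manifold $\overline{\B^2}$ with uniformly equivalent metrics, Arzelà–Ascoli gives a subsequential limit $T$. This limit is $1$-Lipschitz for $g$, pushes forward $\d x/\omega_2$ to the normalized $g$-volume (weak convergence of pushforwards and of the measures), and satisfies the lower bound $(a/L)|z-z'|$, where $L=\Lip(T)\le\liminf L_\varepsilon$. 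Since the lower bound holds in the limit, $T$ is injective, hence a homeomorphism. It is surjective because its image is compact and has full measure.

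The spectral inequality follows from the min–max principle. Pull back test functions $f\mapsto f\circ T$, which preserve the $L^2$ norm up to the constant $a$, preserve $H^1$ membership, and satisfy $|\nabla(f\circ T)|\le L|\nabla f|\circ T$. Hence Rayleigh quotients satisfy $R_{\mathrm{Euc}}(f\circ T)\le L^2R_g(f)$. The bi-Lipschitz property makes the pullback an isomorphism that preserves $H^1_0$ (since $T$ maps boundary to boundary, being a homeomorphism of the closed disk), so $\lambda_k^{\mathrm{Euc}}\le L^2\lambda_k(g)$. I expect the main obstacle to be the approximation step: perturbing to $K>0$ while keeping $k\equiv1$ exactly, and in particular checking that Koerber's hypotheses allow the perturbed metrics.
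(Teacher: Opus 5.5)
Your treatment of the case $K_g>0$ and your min--max derivation of the Dirichlet and Neumann bounds are essentially what the paper does. The paper also uses the uniqueness part of Koerber's theorem, up to $O(3)$, to upgrade the $C^{k+1,\alpha}$ embedding to a smooth one. The genuine gap is the approximation step for $K_g\geq0$: the construction you propose cannot work. If $u_\varepsilon=\partial_\eta u_\varepsilon=0$ on $\partial\B^2$, the divergence theorem gives $\int_{\B^2}\Delta_g u_\varepsilon\,\d\vol_g=\int_{\partial\B^2}\partial_\eta u_\varepsilon\,\d s=0$. So $-\Delta_g u_\varepsilon=\varepsilon\phi$ with $\phi>0$ is impossible, and a compactly supported correction cannot repair this because it does not change the boundary data. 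Geometrically, Gauss--Bonnet with $k\equiv1$ gives $\int_{\B^2}K\,\d\vol=2\pi-\ell(\partial\B^2)$. Hence any conformal perturbation with $u_\varepsilon|_{\partial\B^2}=0$ preserves the boundary length and therefore the total curvature; for the flat unit disk this already rules out $K_{g_\varepsilon}>0$. The boundary values must therefore move, and then $k_{g_\varepsilon}=1$ becomes the nonlinear Robin condition $\partial_\eta u=e^{u}-1$, for which you give no construction. Interpolating with the flat disk does not help either, since a flat endpoint supplies no strict positivity where $K_g$ vanishes.

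The missing idea is this. In global isothermal coordinates $g=E_0^2g_{\mathrm{Euc}}$, the condition $k=1$ reads $\partial_\eta(E^{-1})=E^{-1}-1$, which is affine in $E^{-1}$. Take $g_1=E_1^2g_{\mathrm{Euc}}$ with $K_{g_1}>0$ and $k=1$ (e.g.\ a strictly convex free-boundary spherical cap), and set $E_\varepsilon^{-1}=(1-\varepsilon)E_0^{-1}+\varepsilon E_1^{-1}$. This preserves $k=1$ exactly. The computation in Koerber's Lemma~2.1 then writes $-\Delta_{\mathrm{Euc}}\log E_\varepsilon$ as a positive combination of $E_0^2K_g$ and $E_1^2K_{g_1}$ plus a square, so $K_{g_\varepsilon}>0$ and $g_\varepsilon\to g$ smoothly.

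There is also a smaller logical slip in your limit step. Passing to the limit only yields $(a/L_*)\,|z-z'|\leq\dist_g(Tz,Tz')$, with $L_*$ a limit of the $L_{\varepsilon_j}$. Since $\Lip(T)\leq L_*$, the bound with $a/\Lip(T)$ is stronger than this, not a consequence of it. The sharp constant needs the Jacobian argument of Lemma~\ref{lem:inverse-bound}. First, $J_T=a$ almost everywhere by the area formula, so the smaller singular value of $\d T$ is at least $a/L$. Then the $W^{1,\infty}$-to-Lipschitz principle converts this pointwise bound into the distance bound.
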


The closed construction also yields contracting transport maps on a
hemisphere.  Beck and Jerison asked, among other things, whether the uniform
probability measure on a round hemisphere can be transported by an injective
$1$-Lipschitz map to a probability measure with log-concave density on a
geodesically convex subset \cite[Section~IV]{BeckJerison}.  Fathi, Fradelizi,
Gozlan and Zugmeyer showed that this fails in general, while observing that
their obstructions leave open the uniform-target subcase, in which the target
is the normalized uniform measure on a positive-volume geodesically convex
subset \cite[p.~159]{FathiFradeliziGozlanZugmeyer}.

Let $\Sph^n_+$ be a closed round hemisphere of
$(\Sph^n,g_{\mathrm{can}})$ and set
$\sigma_+:=\mathbf 1_{\Sph^n_+}\d\vol_{\mathrm{can}}/|\Sph^n_+|$.

A full-support target on the hemisphere must be uniform.  Indeed, suppose that
$T:\Sph^n_+\to\Sph^n_+$ is $1$-Lipschitz and
\[
 T_\#\sigma_+=\rho\,\d\vol_{\mathrm{can}},
 \qquad
 \operatorname{supp}(\rho\,\d\vol_{\mathrm{can}})=\Sph^n_+,
 \qquad \int_{\Sph^n_+}\rho\,\d\vol_{\mathrm{can}}=1.
\]
The support assumption and compactness of the image imply that $T$ is
surjective.  For every Borel set
$A\subseteq\Sph^n_+$, the area formula \cite{Kirchheim}, surjectivity, and
$J_T\leq1$ yield 
\[
 |\Sph^n_+|\int_A\rho\,\d\vol_{\mathrm{can}}
 =|T^{-1}(A)|
 \geq\int_{T^{-1}(A)}J_T\,\d\vol_{\mathrm{can}}
 \geq |A|.
\]
Hence $\rho\geq|\Sph^n_+|^{-1}$ almost everywhere, and normalization forces
equality.  Thus any nonuniform probability measure that occurs as such an image must be
supported on a proper subset of the hemisphere.

The uniform-target subcase left open in
\cite{FathiFradeliziGozlanZugmeyer} has an affirmative answer in every
dimension $n\geq2$.

\begin{theorem}[Convex subsets of a hemisphere]
	\label{thm:hemisphere-transport}
	Let $n\geq2$, and let $K\subseteq\Sph^n_+$ be closed and geodesically convex 
	with $|K|:=\int_K\d\vol_{\mathrm{can}}>0$. 
	We endow $K$ with the intrinsic length metric induced by
	$g_{\mathrm{can}}$.
	Since $K$ is geodesically convex, this metric agrees with the restriction
	of the ambient spherical distance.
	Then there exists a bi-Lipschitz homeomorphism
	\[
	T:(\Sph^n_+,g_{\mathrm{can}})
	\longrightarrow (K, g_{\mathrm{can}})
	\]
	such that
	\begin{equation*}
	 T_\#\sigma_+=\frac{\mathbf 1_K\d\vol_{\mathrm{can}}}{|K|}, 
		\qquad
		L:=\Lip(T)\leq1.
	\end{equation*}
	With $b_K:=|K|/|\Sph^n_+|$, 
	\begin{equation}\label{eq:hemisphere-contraction}
		\frac{b_K}{L^{n-1}}\dist_{\mathrm{can}}(z,z')
		\leq
		\dist_{\mathrm{can}}(Tz,Tz')
		\leq
		L\dist_{\mathrm{can}}(z,z'),
		\qquad
		\forall\,z,z'\in\Sph^n_+.
	\end{equation}
\end{theorem}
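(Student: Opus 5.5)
The plan is to obtain Theorem~\ref{thm:hemisphere-transport} from the closed part of Theorem~\ref{thm:extrinsic-unified} by a reflection-equivariant doubling and a limit. View $\Sph^n$ as the totally geodesic equator $E=\{x_{n+2}=0\}$ of $\Sph^{n+1}$, and let $R$ denote the reflection $x_{n+2}\mapsto -x_{n+2}$.

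\emph{Step 1: approximate $K$.} First reduce to the case where $K$ lies in the open hemisphere and is smooth and strictly convex in $E$. To do this, shrink $K$ slightly toward an interior point along geodesics, take a small inner parallel set, and smooth it; these approximations converge to $K$ in Hausdorff distance and in volume. For such a $K$ and small $\varepsilon>0$, build a smooth, strictly convex, $R$-invariant closed hypersurface $\Sigma_\varepsilon\subseteq\Sph^{n+1}$. It will bound a thin convex "lens" around $K$, for instance a smoothed boundary of a suitably convexified $\varepsilon$-neighbourhood of $K$ in $\Sph^{n+1}$, and it will lie in an open hemisphere. As $\varepsilon\to0$, the upper half $\Sigma_\varepsilon^+=\Sigma_\varepsilon\cap\{x_{n+2}\geq0\}$ converges to $K$ in Hausdorff distance. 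Its intrinsic distance converges to $\dist_{\mathrm{can}}|_K$, and its normalized area measure converges weakly to $\mathbf 1_K\d\vol_{\mathrm{can}}/|K|$.

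\emph{Step 2: equivariance.} The IMCF starting from $\Sigma_\varepsilon$ is unique, so it commutes with $R$. The terminal equator bounds the limiting hemisphere containing the enclosed regions. It is therefore $R$-invariant and contains $e_{n+2}$, so $R$ acts on it as a reflection across a great $(n-1)$-sphere. The map $T_\varepsilon$ from the closed part of Theorem~\ref{thm:extrinsic-unified} is a limit of the backward Lagrangian maps $\Phi_{t,s}$, so it commutes with $R$. Hence $T_\varepsilon$ sends the fixed set of $R$ to the fixed set of $R$, and each half of the terminal sphere (a round hemisphere $\Sph^n_+$) onto the corresponding half of $\Sigma_\varepsilon$.

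By symmetry $T_\varepsilon$ pushes the normalized measure on $\Sph^n_+$ to the normalized area of $\Sigma^+_\varepsilon$. Any $\Sigma_\varepsilon$-geodesic between two points of $\Sigma^+_\varepsilon$ can be folded by $R$ into $\Sigma^+_\varepsilon$ without changing its length. The intrinsic distance of $\Sigma^+_\varepsilon$ therefore equals the restriction of $\dist_{\Sigma_\varepsilon}$. Thus \eqref{eq:intro-closed-bilip} restricts to
\[
 \frac{b_\varepsilon}{L_\varepsilon^{n-1}}\dist_{\mathrm{can}}(z,z')\leq\dist_{\Sigma^+_\varepsilon}(T_\varepsilon z,T_\varepsilon z')\leq L_\varepsilon\dist_{\mathrm{can}}(z,z'),
 \qquad b_\varepsilon=\frac{|\Sigma_\varepsilon|}{|\Sph^n|}=\frac{|\Sigma_\varepsilon^+|}{|\Sph^n_+|}\to b_K .
\]

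\emph{Step 3: limit.} Compose $T_\varepsilon$ with the nearest-point projection onto $E$, or simply regard $\Sigma^+_\varepsilon$ as Gromov--Hausdorff close to $K$. Since $L_\varepsilon\leq1$, Arzelà--Ascoli gives a subsequence converging uniformly to a $1$-Lipschitz map $T:\Sph^n_+\to K$, with $L:=\Lip(T)\leq\liminf L_\varepsilon$. Weak convergence of the pushforwards gives $T_\#\sigma_+=\mathbf 1_K\d\vol_{\mathrm{can}}/|K|$. The lower bounds pass to the limit, and $b_K/L^{n-1}\geq\limsup b_\varepsilon/L_\varepsilon^{n-1}$, which yields \eqref{eq:hemisphere-contraction}. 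In particular $T$ is injective, so it is a bi-Lipschitz homeomorphism onto its image, and surjectivity follows from the support of the pushforward. A second diagonal limit over the approximations in Step 1 is handled in exactly the same way.

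\emph{Main obstacle.} I expect the hardest part to be the uniform control in Step 3, where the intrinsic metric of the thin lens must converge to the metric of $K$. In particular one must show that the upper half-surface, including its folded rim near $\partial K$, contributes vanishing area and distorts distances only by $o(1)$. I would first try an explicit lens: the boundary of the set of points of $\Sph^{n+1}$ whose projection to $E$ lies in $K$ and whose height is at most $\varepsilon f$, with $f$ a concave-type profile vanishing at $\partial K$, suitably smoothed to be strictly convex. For such a lens, the vertical projection onto $K$ is $1+o(1)$ bi-Lipschitz away from an $o(1)$-area rim. A second point to check is that the terminal equator is indeed $R$-invariant; this follows from uniqueness of the flow and of its limit.
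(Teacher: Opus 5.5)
Your overall route is the paper's: thin $\mathcal R$-invariant strictly convex hypersurfaces over $K$, the equivariant IMCF endpoint map restricted to halves, and an Arzel\`a--Ascoli limit. Your argument that the terminal equator is $\mathcal R$-invariant is a valid and simpler substitute for the paper's polar-duality argument. It runs as follows: the limiting closed hemisphere is a limit of $\mathcal R$-invariant convex regions, so its pole is fixed by $\mathcal R$.

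\textbf{The gap: the sharp constant in Step 3.} From $L=\Lip(T)\leq\liminf L_\varepsilon$ you correctly get $b_K/L^{n-1}\geq\limsup b_\varepsilon/L_\varepsilon^{n-1}$, but this inequality points the wrong way for your purpose. Passing the prelimit lower bounds to the limit only yields $c\,\dist_{\mathrm{can}}(z,z')\leq\dist_{\mathrm{can}}(Tz,Tz')$, where $c=\lim b_\varepsilon/L_\varepsilon^{n-1}$ along a subsequence. Since $c\leq b_K/L^{n-1}$, this is a weaker statement than \eqref{eq:hemisphere-contraction}. The inequality is strict whenever the Lipschitz constant drops in the limit, which uniform convergence does not prevent. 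So the sharp form with $L=\Lip(T)$ does not follow from your argument.

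\textbf{How to close it.} Treat the limit map directly, as the paper does. First, use only the crude bound $b_\varepsilon\dist_{\mathrm{can}}(z,z')\leq\dist_{\Sigma_\varepsilon}(T_\varepsilon z,T_\varepsilon z')$, which holds since $L_\varepsilon\leq1$. Passing to the limit gives $b_K\dist_{\mathrm{can}}(z,z')\leq\dist_{\mathrm{can}}(Tz,Tz')$. Hence $T$ is a bi-Lipschitz homeomorphism onto $K$ pushing $\sigma_+$ to the normalized uniform measure. Then apply the convex case of Lemma~\ref{lem:inverse-bound} to $T$ itself. It gives $J_T=b_K$ almost everywhere, so the smallest singular value of $\d T$ is at least $b_K/L^{n-1}$. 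The $W^{1,\infty}$-to-Lipschitz principle on the convex set $K$ then upgrades this pointwise bound to the sharp inverse bound.

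\textbf{On the obstacle you flag.} Only $\limsup\dist_{\Sigma_\varepsilon}(p_\varepsilon,q_\varepsilon)\leq\dist_{\mathrm{can}}(p,q)$ is needed, since the reverse inequality is automatic. The rim having small area does not control this, because a curve between points near $\partial K$ must pass through the rim. What works is the concavity you allude to. Use gnomonic coordinates, where geodesic convexity becomes Euclidean convexity, and take the height $s=\varepsilon\sqrt{-\rho}$ for a strictly convex defining function $\rho$. Along each segment this height is concave, so its total variation is at most $2\norm{s}_{C^0}$. The lifted segment therefore has length at most the great-circle length plus $O(\norm{s}_{C^0})$, uniformly in the endpoints. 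The bound $|\nabla s|\lesssim\varepsilon\,\dist_{\mathrm{Euc}}(\cdot,\partial A)^{-1/2}$, where $A$ is the gnomonic image of the smoothed convex body, gives an integrable Jacobian. This yields the weak convergence of the area measures.
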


The uniform result follows by approximating $K$ with thin reflection-symmetric
strictly convex hypersurfaces and applying the equivariant closed construction.
In dimension two, an equivariant refinement of
Corollary~\ref{cor:alexandrov}, applied to metric doubles, yields the
corresponding contraction from the hemisphere onto an Alexandrov disk.

\begin{theorem}[Contractions onto Alexandrov disks]
\label{thm:alexandrov-disk}
Let $(Y,\dist_Y)$ be a compact Alexandrov surface of curvature at least one
and homeomorphic to a closed disk.  Then there exists a bi-Lipschitz
homeomorphism
\[
 T:(\Sph^2_+,g_{\mathrm{can}})\longrightarrow(Y,\dist_Y)
\]
such that
\[
 T_\#\sigma_+=\frac{\Hh_Y^2}{\Hh^2(Y)},
 \qquad L:=\Lip(T)\leq1.
\]
With $b_Y:=\Hh^2(Y)/|\Sph^2_+|$,
\begin{equation}
	 \frac{b_Y}{L}\dist_{\mathrm{can}}(z,z')
	\leq\dist_Y(Tz,Tz')
	\leq L\dist_{\mathrm{can}}(z,z'),
	\qquad \forall\,z,z'\in\Sph^2_+. 
\end{equation}
\end{theorem}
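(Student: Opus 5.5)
The plan is to realize $Y$ as one half of a reflection-symmetric closed Alexandrov sphere. We then apply an equivariant version of the closed construction, of the type underlying Corollary~\ref{cor:alexandrov}, and restrict to a fundamental domain.

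First, form the metric double $DY=Y\cup_{\partial Y}Y$. By Perelman's doubling theorem, $DY$ is a compact Alexandrov surface of curvature at least one, homeomorphic to $\Sph^2$. It carries an isometric involution $\iota$ whose fixed set is $\partial Y$. The curve $\partial Y$ separates $DY$ into two copies of $Y$, and $\Hh^2(DY)=2\Hh^2(Y)$.

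Second, approximate $DY$ equivariantly by smooth metrics $g_j$ on $\Sph^2$ with $K_{g_j}>1-\varepsilon_j$, each invariant under a reflection. After rescaling, we may take $K_{g_j}\geq1$. One could mollify the conformal factor in isothermal coordinates adapted to $\iota$, or use Alexandrov--Pogorelov polyhedral approximation followed by symmetric smoothing. By Lu's spherical Weyl theorem, each $g_j$ embeds in $\Sph^3$ as a convex surface, uniquely up to isometry. Uniqueness forces the intrinsic reflection to extend to an ambient reflection of $\Sph^3$ across a totally geodesic $\Sph^2$. The existence part of Theorem~\ref{thm:extrinsic-unified}(1), via IMCF (Makowski--Scheuer), then preserves this symmetry, because the flow is unique and commutes with ambient isometries. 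The terminal equator is symmetric too. Hence the map $T_j:\Sph^2\to\Sigma_j$ commutes with a reflection $R$ of the round sphere. Moreover, $T_j$ sends the hemisphere $\Sph^2_+$ onto the half of $\Sigma_j$ bounded by the fixed curve, and it pushes $\sigma_+$ to the normalized area of that half. For the bi-Lipschitz bound, restricting $T_j$ to $\Sph^2_+$ gives $\Lip\leq L_j\leq1$ with respect to $\dist_{\mathrm{can}}$. Distances within a half are controlled by the distances in the whole sphere, since the reflection maps any path into the half without increasing length. The lower bound \eqref{eq:intro-milman-bilip} with $a_j=|\Sigma_j|/|\Sph^2|=\Hh^2(\text{half})/|\Sph^2_+|$ transfers verbatim.

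Third, pass to the limit. The maps $T_j$ are uniformly Lipschitz and uniformly co-Lipschitz, with constants $L_j\leq1$ and $a_j/L_j$ bounded below since $a_j\to b_Y$. The halves converge in the Gromov--Hausdorff sense to $Y$, as the restrictions of the equivariant approximations. By Arzel\`a--Ascoli, a subsequence converges to a map $T:\Sph^2_+\to Y$ that is $L$-Lipschitz with $L=\lim L_j\leq1$, satisfies the lower bound with constant $b_Y/L$, and is therefore a bi-Lipschitz homeomorphism onto its image. Surjectivity follows from the surjectivity of each $T_j$. Weak convergence of the pushforwards, together with convergence of the normalized Hausdorff measures under noncollapsed GH convergence, gives $T_\#\sigma_+=\Hh^2_Y/\Hh^2(Y)$.

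The main obstacle is the second step. We must produce smooth approximations with $K\geq1$ that are exactly reflection-symmetric and that converge to the double. Their half-areas and half-metrics must also converge to those of $Y$, as metric spaces with the intrinsic half-metrics. I would first try the route through Corollary~\ref{cor:alexandrov}'s approximation scheme, which presumably exists already, and symmetrize it by averaging the conformal factor under $\iota$. Averaging preserves a lower bound on $K$ only approximately, which is why a rescaling by $1+\varepsilon_j$ is needed.
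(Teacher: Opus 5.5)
Your architecture is the paper's: double $Y$, approximate $DY$ by $r_0$-invariant smooth metrics with $K\geq1$ through an equivariant conformal uniformization, make the involution ambient via the spherical Weyl theorem and rigidity, run the symmetric IMCF endpoint construction, and restrict to a hemisphere. The gap is in the last step, and it comes from restricting to halves \emph{before} passing to the limit.

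You set $L:=\lim_j L_j$ and claim the lower bound with constant $b_Y/L$. Uniform convergence only gives $\Lip(T)\leq\liminf_j L_j$, possibly with strict inequality. So your argument proves only
\[
 \frac{b_Y}{\lim_j L_j}\,\dist_{\mathrm{can}}(z,z')
 \leq\dist_Y(Tz,Tz')
 \leq\Lip(T)\,\dist_{\mathrm{can}}(z,z').
\]
The theorem asserts the lower bound with $b_Y/\Lip(T)$, which is stronger whenever $\Lip(T)<\lim_j L_j$. The prelimit constants $a_j/L_j$ carry no information about the Lipschitz constant of the limit map.

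The sharp constant has to be produced at the limit by the Jacobian argument. The pushforward identity gives $J_T=b_Y$ a.e., so the singular values satisfy $\sigma_1\sigma_2=b_Y$ and $\sigma_1\leq\Lip(T)$. Hence $\norm{\d T^{-1}}\leq\Lip(T)/b_Y$ a.e., and a Sobolev-to-Lipschitz principle on the target upgrades this to a global bound. Lemma~\ref{lem:inverse-bound} supplies this for smooth manifolds, for convex subsets of $\Sph^n$, and for closed Alexandrov surfaces. It does not cover the pair $(\Sph^2_+,Y)$ with $Y$ an Alexandrov disk.

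This is why the paper passes to the limit on the whole double and restricts only at the end. Your equivariant $T_j$ are defined on all of $\Sph^2$, so you can do the same.
\begin{enumerate}
\item Take the limit to get an equivariant $F:\Sph^2\to DY$; this is Proposition~\ref{prop:hemisphere-equivariant-alexandrov}.
\item Apply the Alexandrov case of Lemma~\ref{lem:inverse-bound} to $F$ with $\tilde L=\Lip(F)$.
\item Show $\tilde L=\Lip(F|_{\Sph^2_+})$. By equivariance both hemispheres have the same constant $L$. Splitting a minimizing geodesic between points of opposite hemispheres at an equator point $q$ gives $\dist_X(Fz,Fz')\leq L\bigl(\dist_{\mathrm{can}}(z,q)+\dist_{\mathrm{can}}(q,z')\bigr)=L\dist_{\mathrm{can}}(z,z')$.
\item The folding map shows that each copy of $Y$ is isometrically embedded in $DY$, so the sharp estimate restricts to $T$.
\end{enumerate}

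Two smaller points. First, Lu's theorem needs $K>1$ strictly and gives only existence. So one embeds $(1-\varepsilon)g_j$, obtains the ambient isometry from Pogorelov's rigidity in elliptic space, and only then lets $\varepsilon\downarrow0$. Second, no averaging of conformal factors is needed. Once the uniformization intertwines $r_0$ and $\iota$, the conformal factor is already $r_0$-invariant, and the heat regularization commutes with $r_0$. In any case, $K\geq1$ for $e^{2v}g_{\mathrm{can}}$ reads $1-\Delta v\geq e^{2v}$, a convex condition in $v$, so averaging would preserve it exactly.
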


The weighted result follows by applying
Theorem~\ref{thm:alexandrov-disk} to a conformal metric.  Let
$\Omega\subseteq\Sph^2$ be a compact smooth topological disk, set
$g_0=g_{\mathrm{can}}|_\Omega$, and denote its intrinsic distance and volume
measure by $\dist_{0,\Omega}$ and $\d\vol_0$.  If $\eta$ is the outward unit
normal and $\tau$ a unit boundary tangent, set $\kappa_0=\langle\nabla^{g_0}_\tau\eta,\tau\rangle_{g_0}$. 
For $V$ normalized by $\max_{\overline\Omega}V=0$, consider the conformal
metric $\hat g=e^{-V}g_0$.  The assumptions below are precisely the
conditions that $\hat g$ have curvature at least one and convex boundary.
Thus $(\Omega,\hat g)$ is an Alexandrov disk of curvature at least one \cite[Section~2.9, Example~(1)]{BuragoGromovPerelman}. 

\begin{corollary}[Weighted contractions via a canonical conformal metric]
	\label{cor:weighted-hemisphere}
	Let $V\in C^2(\overline\Omega)$ be normalized by
	$\max_{\overline\Omega}V=0$ and assume
	\begin{equation}\label{eq:intro-weighted-conditions}
		e^V\left(1+\frac12\Delta_0V\right)\geq1
		\quad\text{on }\Omega,
		\qquad
		\partial_\eta V\leq2\kappa_0
		\quad\text{on }\partial\Omega.
	\end{equation}
	Set $Z_V:=\int_\Omega e^{-V}\d\vol_0$ and
	$m_V:=\min_{\overline\Omega}V$.
	Then there exists a bi-Lipschitz homeomorphism
	\[
	T:(\Sph^2_+,g_{\mathrm{can}})
	\longrightarrow(\Omega,g_0)
	\]
	such that
		\begin{equation*}
		T_\#\sigma_+
		=
		\frac{e^{-V}}{Z_V}\d\vol_0, 
	\end{equation*}
	and 
	\begin{equation}\label{eq:intro-weighted-bilip}
		e^{\frac{m_V}{2}}\frac{Z_V}{2\pi}\dist_{\mathrm{can}}(z,z')
		\leq
		\dist_{0,\Omega}(Tz,Tz')
		\leq
		\dist_{\mathrm{can}}(z,z'),
		\qquad
		\forall\,z,z'\in\Sph^2_+.
	\end{equation}
\end{corollary}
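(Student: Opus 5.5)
The plan is to apply Theorem~\ref{thm:alexandrov-disk} to $Y=(\Omega,\hat g)$ with $\hat g=e^{-V}g_0$, and then compose with the identity map $(\Omega,\hat g)\to(\Omega,g_0)$. First I check that the hypotheses \eqref{eq:intro-weighted-conditions} are exactly the curvature conditions. In dimension two, for $\hat g=e^{2u}g_0$ with $u=-V/2$, the conformal change formula gives
\[
K_{\hat g}=e^{-2u}\bigl(K_0-\Delta_0u\bigr)=e^{V}\bigl(1+\tfrac12\Delta_0V\bigr),
\]
since $K_0=1$. Hence $K_{\hat g}\ge1$. The boundary geodesic curvature transforms as
\[
\hat\kappa=e^{-u}\bigl(\kappa_0+\partial_\eta u\bigr)=e^{V/2}\bigl(\kappa_0-\tfrac12\partial_\eta V\bigr),
\]
so the sign convention for $\kappa_0$ matters. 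With the convention stated in the corollary, $\partial_\eta V\le2\kappa_0$ gives $\hat\kappa\ge0$, that is, a locally convex boundary. A smooth disk with $K\ge1$ and locally convex boundary is then an Alexandrov surface of curvature $\ge1$ homeomorphic to a closed disk, by the cited Burago--Gromov--Perelman example. Because $V\in C^2$ only, I would either note that the gluing and convexity criterion needs just $C^2$ metrics, or approximate $V$; I expect the former to suffice.

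Theorem~\ref{thm:alexandrov-disk} then gives a bi-Lipschitz homeomorphism $T:\Sph^2_+\to(\Omega,\dist_{\hat g})$ with $T_\#\sigma_+=\Hh^2_{\hat g}/\Hh^2_{\hat g}(\Omega)$ and $\Lip(T)=L\le1$. Since $\d\vol_{\hat g}=e^{-V}\d\vol_0$, the total mass is $Z_V$, so the pushforward is $e^{-V}\d\vol_0/Z_V$, as required.

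It remains to compare distances. Since $V\le0$, we have $e^{-V}\ge1$, so $\hat g\ge g_0$ and $\dist_{0,\Omega}\le\dist_{\hat g}$. This gives the upper bound $\dist_{0,\Omega}(Tz,Tz')\le\dist_{\hat g}(Tz,Tz')\le\dist_{\mathrm{can}}(z,z')$. For the lower bound, $e^{-V}\le e^{-m_V}$ gives $\dist_{\hat g}\le e^{-m_V/2}\dist_{0,\Omega}$, because every $g_0$-curve has $\hat g$-length at most $e^{-m_V/2}$ times its $g_0$-length. Combining this with the lower bound of Theorem~\ref{thm:alexandrov-disk}, where $b_Y=Z_V/(2\pi)$ because $|\Sph^2_+|=2\pi$, and with $L\le1$, we get
\[
\dist_{0,\Omega}(Tz,Tz')\ge e^{m_V/2}\dist_{\hat g}(Tz,Tz')\ge e^{m_V/2}\frac{Z_V}{2\pi L}\dist_{\mathrm{can}}(z,z')\ge e^{m_V/2}\frac{Z_V}{2\pi}\dist_{\mathrm{can}}(z,z').
\]
This is \eqref{eq:intro-weighted-bilip}.

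The main obstacle is the first step: checking that the conformal transformation signs match the stated convention for $\kappa_0$ and $\eta$, and that a $C^2$ conformal factor with only locally convex boundary really gives an Alexandrov disk of curvature $\ge1$ with the intrinsic length metric. The remaining steps are bookkeeping.
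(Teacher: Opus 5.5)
Your proposal is correct and follows the paper's proof essentially step for step. The paper also sets $\hat g=e^{-V}g_0$, uses the conformal laws $K_{\hat g}=e^V(1+\frac12\Delta_0V)$ and $\hat\kappa=e^{V/2}(\kappa_0-\frac12\partial_\eta V)$ together with the Burago--Gromov--Perelman example, applies Theorem~\ref{thm:alexandrov-disk} with $b_Y=Z_V/(2\pi)$, and then compares $\dist_{\hat g}$ with $\dist_{0,\Omega}$ via $m_V\leq V\leq0$; your remark on the $C^2$ regularity of $V$ is a point the paper passes over by citing the same example.
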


Although the density can be absorbed into the conformal metric
$e^{-2V/n}g_0$ in every dimension, the realization step used here is
presently two-dimensional: in higher dimensions the conformal double is an
arbitrary Alexandrov sphere, and no analogue of the Weyl realization and
contraction argument is available.

\subsection*{Further remarks and overview}

\paragraph{Free-boundary geometry.}
The boundary condition in Theorem~\ref{thm:disk} is the intrinsic form of the
free boundary condition.  For an isometric free-boundary embedding into
$\overline{\B^3}$, the position vector along the boundary is the outward unit
conormal.  Differentiating its orthogonality to a unit boundary tangent gives
$k_{\partial\overline{\B^2}}=1$ with our sign convention.  Conversely,
Koerber's theorem gives such an embedding when $K_g>0$.  Other applications
of inverse mean curvature flow with free boundary include monotonicity of the
first $p$-Laplacian eigenvalue \cite{HoPyo} and comparisons of volume and
torsional rigidity with the flat disk \cite{GimenoGonzalez}.

\paragraph{Regularity extensions.}
The smoothness assumption in Theorem~\ref{thm:extrinsic-unified} can be
weakened.  For every $0<\alpha<1$, its conclusions remain valid for
$C^{2,\alpha}$ initial hypersurfaces by short-time regularization; see
\cite[Theorem~2.5.7]{GerhardtCurvatureProblems} in the closed case and
\cite[Theorem~2.12]{MarquardtThesis} in the case with free boundary.
Together with \cite[Theorem~1.1]{Koerber}, this proof extends
Theorem~\ref{thm:disk} to
$g\in C^{k,\alpha}(\overline{\B^2})$, $k\geq4$.
Theorem~\ref{thm:milman} also has a counterpart for Alexandrov two-spheres;
see Corollary~\ref{cor:alexandrov}.

\paragraph{Strict spherical caps.}
Fix $N\in\Sph^2$, set $r=\dist_{\mathrm{can}}(N,\cdot)$, and let
$K_R:=\{r\leq R\}$, where $0<R<\pi/2$.
For the outward normal $\eta=\partial_r$, one has $\kappa_0=\cot R$.  Since
$K_R$ is geodesically convex, its intrinsic distance agrees with the ambient
spherical distance.  By Corollary~\ref{cor:weighted-hemisphere}, there exists a bi-Lipschitz contraction for the ambient spherical distance whenever
$V\in C^2(\overline{K_R})$ is normalized by $\max_{K_R}V=0$ and
\[
 e^V\left(1+\frac12\Delta_0V\right)\geq1
 \quad\text{in }K_R,
 \qquad
 \partial_rV\leq2\cot R
 \quad\text{on }\partial K_R.
\]
The radial potentials
\[
 V_a(r)=a(\cos R-\cos r),
 \qquad 0<a<2\cos R,
\]
satisfy
\[
 \nabla^2V_a=a\cos r\,g_0>0,\qquad
 \Delta_0V_a=2a\cos r,\qquad
 \partial_\eta V_a=a\sin R<2\cot R.
\]
They also satisfy the curvature inequality strictly.  Indeed, with
$s=\cos r$ and $c=\cos R$, the required inequality is equivalent to
$\log(1+as)>a(s-c)$; its left-hand side minus its right-hand side is decreasing
in $s\in[c,1]$, while
$\log(1+a)>a-a^2/2>a(1-c)$ because $a<2c$.
Consequently, $V_a$ and all sufficiently small nonradial $C^2$ perturbations
of $V_a$ form a $C^2$-open class of strictly log-concave target densities to
which Corollary~\ref{cor:weighted-hemisphere} applies.  The restriction
$R<\pi/2$ is essential.  When $R=\pi/2$, the conditions imply
$\Delta_0V\geq2(e^{-V}-1)\geq0$ and $\partial_\eta V\leq0$.  The divergence
theorem gives
\[
0\leq
\int_{\Sph^2_+}\Delta_0V\,\d\vol_0
=
\int_{\partial\Sph^2_+}\partial_\eta V\,\d s_0
\leq0.
\]
Hence
$e^{-V}-1=0$ and $V\equiv0$.  This is consistent with the fact that a probability measure with full support
on the whole hemisphere can be the image of $\sigma_+$ under a $1$-Lipschitz
map only when it is uniform.

\paragraph{Relation to quadratic optimal transport.}
The maps constructed here need not be optimal for the quadratic cost; see
\cite{Villani} for background on optimal transport.  For example, let
\[
 \Sph^2_+
 :=\left\{(x_1,x_2,x_3)\in\Sph^2:x_3\geq0\right\},
 \qquad
 K:=\left\{(x_1,x_2,x_3)\in\Sph^2:x_3\geq0,\ x_1\geq0\right\}.
\]
In geodesic polar coordinates $(\theta,\varphi)$ centered at
$e_1=(1,0,0)$, a quadratic-cost optimal map from $\sigma_+$ onto the normalized
uniform measure on $K$ is
\[
 T_{\mathrm{opt}}(\theta,\varphi)=(r(\theta),\varphi),
 \qquad
 \cos r(\theta)=\frac{1+\cos\theta}{2},
\]
which is well-defined almost everywhere.  The source and target have the same angular range, and
$1-\cos r(\theta)=(1-\cos\theta)/2$; comparison of the normalized area
elements $\sin\theta\,\d\theta\,\d\varphi$ and
$\sin r\,\d r\,\d\varphi$ therefore verifies the pushforward identity.
Moreover, the radial coordinate
$x\mapsto\dist_{\mathrm{can}}(e_1,x)$ is $1$-Lipschitz, so every transport plan
has cost at least the one-dimensional quadratic transport cost between the
radial marginals.  The displayed map couples the radial marginals by monotone rearrangement,
attains this lower bound, and is therefore optimal.  For the unit angular vector
$e_\varphi=(\sin\theta)^{-1}\partial_\varphi$, however,
\[
 \norm{\d T_{\mathrm{opt}}|_{(\theta,\varphi)}}_{\mathrm{op}}
 \geq
 \left\lvert
 \d T_{\mathrm{opt}}|_{(\theta,\varphi)}(e_\varphi)
 \right\rvert_{g_{\mathrm{can}}}
 =\frac{\sin r(\theta)}{\sin\theta}
 \to\infty
\]
as $\theta\to\pi$.  Thus the Brenier transport map is not Lipschitz, whereas
Theorem~\ref{thm:hemisphere-transport} supplies a measure-preserving
contraction for the same pair of measures.

\paragraph{Related work.}
Several works generalize Caffarelli's contraction theorem or provide new
proofs.  These include results in infinite-dimensional settings
\cite{feyel2004monge,mikulincer2024brownian}, for $1/d$-concave densities
\cite{carlier2024optimal}, and in curved spaces
\cite{fathi2024transportation,lopez2025bakry,GeSerres}.  In addition to the
work of Kim and Milman \cite{KimMilman}, recent proofs use entropic optimal
transport \cite{chewi2023entropic,fathi2020proof}.

\paragraph{Summary of the constructions.}
The main contraction constructions are summarized in the following table.
\begin{center}
\small
\renewcommand{\arraystretch}{1.25}
\begin{tabular}{@{}c c p{70mm}@{}}
\hline
\textbf{Result} & \textbf{Contraction} & \textbf{Construction}\\ \hline
Theorem~\ref{thm:extrinsic-unified}~(i)
 & $\Sph^n\to\Sigma^n$
 & closed spherical IMCF; terminal equator\\
Theorem~\ref{thm:extrinsic-unified}~(ii)
 & $\overline{\B^n}\to\Sigma^n$
 & free-boundary IMCF; terminal flat disk\\
Theorem~\ref{thm:hemisphere-transport}
 & $\Sph^n_+\to K$
 & thin reflection-symmetric hypersurfaces and an equivariant endpoint limit\\
Theorem~\ref{thm:alexandrov-disk}
 & $\Sph^2_+\to Y$
 & metric doubling and an equivariant Alexandrov contraction\\
Corollary~\ref{cor:weighted-hemisphere}
 & $\Sph^2_+\to\Omega$
 & canonical conformal metric and Theorem~\ref{thm:alexandrov-disk}\\
\hline
\end{tabular}
\end{center}

\paragraph{Organization.}
Section~\ref{sec:mm-flow} establishes the contraction principle and the inverse
distance bound.  Section~\ref{sec:closed} treats closed contractions: it first
constructs the endpoint map for strictly convex spherical hypersurfaces and
then proves the smooth and Alexandrov two-sphere results, including their
metric, spectral, and rigidity consequences.  Section~\ref{sec:free-boundary}
treats hypersurfaces with free boundary and proves the intrinsic disk theorem.
The uniform, Alexandrov-disk, and weighted constructions from a hemisphere
are proved in Section~\ref{sec:hemisphere}.

\section{Flow-generated metric--measure contractions}\label{sec:mm-flow}
All distances in this section are intrinsic length distances, including when
the underlying manifold has boundary.  For a finite positive measure $\mu$,
write $\bar\mu=\mu/\mu(M)$.

For compact metric--measure spaces with normalized measures, we use
Gromov's Lipschitz order
\cite[Chapter~$3\frac12$]{Gromov} and write
\[
 (Y,\dist_Y,\bar\nu)\preceq_{\mathrm{Lip}}
 (X,\dist_X,\bar\mu)
\]
if there exists a $1$-Lipschitz map $F:X\to Y$ with
$F_\#\bar\mu=\bar\nu$.  The relation is transitive by composition.  Under the
hypotheses below, for $\alpha(t)\geq0$, the backward maps show that
\[
 (M,g_s,\bar\mu_s)\preceq_{\mathrm{Lip}}
 (M,g_t,\bar\mu_t),\qquad s<t.
\]
Hence $t\mapsto(M,g_t,\bar\mu_t)$ is monotone in the Lipschitz order.
Proposition~\ref{prop:endpoint-contraction} includes a terminal model as the
maximal element of this curve.

\begin{proposition}[Metric--measure monotone flow]\label{prop:mm-flow}
Let $M$ be a compact connected smooth manifold, possibly with boundary.  Let
$g_t$, $t\in I$, be Riemannian metrics depending $C^1$-smoothly on $t$, and
let $\mu_t$ be smooth positive measures with the same time regularity.
Suppose there exist locally integrable functions
$\alpha,c:I\to\R$ such that
\begin{equation}\label{eq:mm-flow-hypotheses}
 \partial_t g_t\geq2\alpha(t)g_t,
 \qquad
 \partial_t\mu_t=c(t)\mu_t.
\end{equation}
Then for $s<t$ the identity
\[
 I_{t,s}:(M,g_t)\longrightarrow(M,g_s)
\]
preserves normalized measure and satisfies
\begin{equation}\label{eq:mm-flow-conclusion}
 \Lip(I_{t,s})\leq
 e^{-\int_s^t \alpha(r)\,\d r}.
\end{equation}
More generally, suppose $X:I\times M\to N$ is smooth and each
$X_t:=X(t,\cdot)$ is an embedding.  If $g_t=X_t^*\bar g$ and
$\mu_t=X_t^*\nu_t$ for measures $\nu_t$ on $X_t(M)$, then the transported map
\[
 \Phi_{t,s}:=X_s\circ X_t^{-1}:X_t(M)\longrightarrow X_s(M)
\]
has the same Lipschitz bound and satisfies
\[
 (\Phi_{t,s})_\#\frac{\nu_t}{\nu_t(X_t(M))}
 =\frac{\nu_s}{\nu_s(X_s(M))}.
\]
\end{proposition}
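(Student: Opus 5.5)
The plan is to prove the metric bound pointwise on tangent vectors, integrate it along curves to control intrinsic distances, and then handle the measures separately.

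\textbf{Step 1 (tangent vectors).} Fix $p\in M$ and $v\in T_pM$, and set $f(r):=g_r(v,v)$ for $r\in I$. The hypothesis $\partial_t g_t\geq2\alpha(t)g_t$ gives $f'(r)\geq2\alpha(r)f(r)$. Since $f$ is $C^1$ and $\alpha$ is locally integrable, I multiply by the integrating factor $e^{-2\int_s^r\alpha}$. This shows that $r\mapsto e^{-2\int_s^r\alpha}f(r)$ is nondecreasing, which is legitimate because the product is absolutely continuous. Hence
\[
g_s(v,v)\leq e^{-2\int_s^t\alpha(r)\,\d r}\,g_t(v,v).
\]

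\textbf{Step 2 (distances).} Take any piecewise $C^1$ curve $\gamma$ in $M$ joining $x$ to $y$. By Step~1, its $g_s$-length is at most $e^{-\int_s^t\alpha}$ times its $g_t$-length. Taking the infimum over admissible curves gives
\[
\dist_{g_s}(x,y)\leq e^{-\int_s^t\alpha}\dist_{g_t}(x,y).
\]
Curves may run along $\partial M$; this is allowed because both distances are intrinsic length distances over the same class of curves in $M$. This is exactly the Lipschitz bound \eqref{eq:mm-flow-conclusion} for $I_{t,s}$.

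\textbf{Step 3 (measures).} Write $\mu_t=e^{\int_s^tc(r)\,\d r}\mu_s$. To justify this, write $\mu_t=\rho_t\,\mu_{s}$ with a smooth positive density $\rho_t$. The hypothesis becomes $\partial_t\rho_t=c(t)\rho_t$ pointwise, which integrates to $\rho_t=e^{\int_s^tc}$, a constant. Therefore $\bar\mu_t=\bar\mu_s$, so the identity map preserves normalized measure.

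\textbf{Step 4 (embedded version).} Set $\Phi_{t,s}=X_s\circ X_t^{-1}$. The map $X_t$ is an isometry from $(M,g_t)$ onto $X_t(M)$ with its induced intrinsic metric, since lengths of curves correspond under pullback. Likewise $X_s$ is an isometry from $(M,g_s)$ onto $X_s(M)$. So $\Phi_{t,s}$ is conjugate to $I_{t,s}$ by isometries, and it inherits the same Lipschitz constant. For the measures, $\mu_t=X_t^*\nu_t$ means $\nu_t=(X_t)_\#\mu_t$, and similarly $\nu_s=(X_s)_\#\mu_s$. Hence
\[
(\Phi_{t,s})_\#\bar\nu_t=(X_s)_\#(X_t^{-1})_\#\bar\nu_t=(X_s)_\#\bar\mu_t=(X_s)_\#\bar\mu_s=\bar\nu_s.
\]

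\textbf{Main obstacle.} There is no real obstacle here. The only care needed is the low regularity in time: $\alpha$ and $c$ are merely locally integrable. This is handled by the absolute-continuity argument in Steps~1 and~3 rather than by pointwise differentiation of the exponential factors.
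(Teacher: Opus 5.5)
Your proposal is correct and follows the paper's proof essentially step for step. Both apply Gr\"onwall to $r\mapsto g_r(v,v)$, take infima of curve lengths, integrate the measure equation to obtain $\mu_t=e^{\int_s^t c(r)\,\d r}\mu_s$, and transfer everything to the embedded images by pullback; your integrating-factor and density arguments just spell out the details the paper leaves implicit when $\alpha$ and $c$ are only locally integrable.
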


\begin{proof}
For each tangent vector $v$, apply Gr\"{o}nwall's inequality to
$r\mapsto g_r(v,v)$.  The first inequality in
\eqref{eq:mm-flow-hypotheses} then yields
$g_t\geq e^{2\int_s^t \alpha(r)\,\d r}g_s$.
Applying this quadratic-form inequality to the velocities of absolutely
continuous curves, and then taking infima of their lengths, proves
\eqref{eq:mm-flow-conclusion}.  Moreover, the measure equation in
\eqref{eq:mm-flow-hypotheses} yields
$\mu_t=e^{\int_s^t c(r)\,\d r}\mu_s$ and
$\mu_t(M)=e^{\int_s^t c(r)\,\d r}\mu_s(M)$.  Consequently,
$\mu_t/\mu_t(M)=\mu_s/\mu_s(M)$.
Thus $I_{t,s}$ preserves normalized measure.
Pullback by the embeddings then gives the last statement.
\end{proof}

The first-variation formulas characterize inverse mean curvature flow among
normal flows.

\begin{proposition}[Characterization of IMCF among normal flows]
\label{prop:imcf-selection}
Let $X_t:M^n\to(N^{n+1},\bar g)$, $t\in I$, be a smooth normal flow of
compact hypersurfaces, possibly with boundary, where $I$ is an interval
containing $0$,
\[
 \partial_t X=f_t\nu_t,
 \qquad f_t>0.
\]
Use the sign convention for which the second fundamental form $h_t$ is
positive on a strictly convex hypersurface, and let $g_t,H_t$, and $\d\vol_t$
denote the pullback metric, mean curvature, and Riemannian volume measure.  Then
\begin{equation}\label{eq:normal-first-variation}
 \partial_t g_t=2f_t h_t,
 \qquad
 \partial_t\d\vol_t=f_t H_t\,\d\vol_t.
\end{equation}
The following statements hold:
\begin{enumerate}
\item if $h_t\geq0$, every backward flow map is $1$-Lipschitz;
\item the backward maps preserve normalized volume for every $s<t$ if
and only if $f_t H_t$ is constant on each time slice, say
$f_t H_t=c(t)$; if $H_t>0$, then the time change
$\theta(t)=\int_0^t c(r)\,\d r$ turns the flow into
$\partial_\theta X=H^{-1}\nu$.
\end{enumerate}
\end{proposition}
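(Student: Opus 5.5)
The plan is first to derive the first-variation formulas \eqref{eq:normal-first-variation} by a direct computation in local coordinates $x^i$ on $M$, and then to read off (i) and (ii) from Proposition~\ref{prop:mm-flow} together with elementary arguments.

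\textbf{First variation.} Write $g_{ij}=\bar g(\partial_iX,\partial_jX)$. Differentiating in $t$, using that $[\partial_t,\partial_i]=0$ and that the ambient connection is torsion-free, gives
\[
 \partial_t g_{ij}=\bar g(\bar\nabla_i(f\nu),\partial_jX)+\bar g(\partial_iX,\bar\nabla_j(f\nu)).
\]
Since $\nu\perp\partial_jX$, the terms containing $\partial_if$ drop out. With the sign convention fixed in the statement (for instance $h_{ij}=\bar g(\bar\nabla_i\nu,\partial_jX)$ when $\nu$ is the outward normal, so that $h>0$ on convex hypersurfaces), this yields $\partial_tg=2fh$. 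Tracing with $g^{ij}$ then gives
\[
 \partial_t\sqrt{\det g}=\tfrac12 g^{ij}\partial_tg_{ij}\sqrt{\det g}=fH\sqrt{\det g},
\]
which is the second formula. If $M$ has boundary, the computation is pointwise and nothing changes. The only care needed is to keep the sign convention consistent.

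\textbf{Part (i).} If $h_t\geq0$ and $f_t>0$, then $\partial_tg_t\geq0$. This is hypothesis \eqref{eq:mm-flow-hypotheses} with $\alpha\equiv0$, so Proposition~\ref{prop:mm-flow} (the embedded version) gives $\Lip(\Phi_{t,s})\leq1$ for $s<t$, with respect to the intrinsic length distances.

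\textbf{Part (ii).} Sufficiency is immediate: if $fH=c(t)$, then $\partial_t\d\vol_t=c(t)\d\vol_t$, and Proposition~\ref{prop:mm-flow} yields preservation of normalized volume.

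For necessity, suppose that $\d\vol_t/|M|_t=\d\vol_s/|M|_s$ for all $s<t$. Taking the density ratio, this means $\d\vol_t=\lambda(s,t)\,\d\vol_s$ with $\lambda(s,t)=|M|_t/|M|_s$ independent of the point. Differentiating in $t$ at $t=s$ gives $f_sH_s=\partial_t\log|M|_t\big|_{t=s}$, which is constant on the slice. The differentiation is justified by the $C^1$ dependence in $t$. Matching the identity pointwise is legitimate because both measures are smooth and positive, so their Radon--Nikodym density is a continuous function that must be constant.

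Finally, suppose $H>0$ and set $\theta(t)=\int_0^tc$. Then $c=fH>0$, so $\theta$ is strictly increasing and hence a valid reparametrization. By the chain rule,
\[
 \partial_\theta X=\frac{f}{c}\,\nu=\frac{f}{fH}\,\nu=H^{-1}\nu,
\]
which is IMCF.

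The only genuine subtlety is the sign bookkeeping in the first variation. The necessity direction of (ii) is routine once the measure identity is read as an equality of densities.
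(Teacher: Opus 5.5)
Your proposal is correct and follows the paper's proof essentially step for step: the same coordinate first-variation computation, part (i) via Proposition~\ref{prop:mm-flow} with $\alpha\equiv0$, and the time change $\partial_\theta X=(f/c)\nu=H^{-1}\nu$. The only difference is in the necessity direction of (ii). There the paper applies the quotient rule to $\bar\mu_t=\d\vol_t/\vol_{g_t}(M)$ to get $\partial_t\bar\mu_t=\bigl(f_tH_t-\int_M f_tH_t\,\d\bar\mu_t\bigr)\bar\mu_t$, and also notes that $\theta(t)=\log(V_t/V_0)$ with $V_t=\vol_{g_t}(M)$; this is equivalent to your differentiation of the density ratio $\d\vol_t/\d\vol_s$.
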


\begin{proof}
In local coordinates, differentiating
$g_{ij}=\langle X_i,X_j\rangle_{\bar g}$ and using
$\overline\nabla_{X_i}\nu=h_i{}^kX_k$ gives
$\partial_t g_{ij}=2f_t h_{ij}$.  Taking one half of the $g_t$-trace yields
$\partial_t\d\vol_t=f_t H_t\,\d\vol_t$.  These computations are pointwise, so no
additional boundary term occurs when $M$ has boundary.
The first claim follows from Proposition~\ref{prop:mm-flow}.  For the second,
set $V_t:=\vol_{g_t}(M)$ and $\bar\mu_t:=V_t^{-1}\d\vol_t$.
By \eqref{eq:normal-first-variation},
\[
 V_t'=\int_M f_t H_t\,\d\vol_t
 =V_t\int_M f_t H_t\,\d\bar\mu_t.
\]
The quotient rule therefore yields 
\begin{equation}
 \partial_t\bar\mu_t=
 \left(f_t H_t-\int_M f_t H_t\,\d\bar\mu_t\right)\bar\mu_t.
\end{equation}
The backward maps preserve normalized volume for every $s<t$ precisely when
$\bar\mu_t$ is independent of $t$.  The last display shows that this is
equivalent to $f_t H_t$ being spatially constant.  Write this constant as
$c(t)$.  If $H_t>0$, then $c(t)>0$.
Equation \eqref{eq:normal-first-variation} also gives
$\d(\log V_t)/\d t=c(t)$,
which implies $\theta(t):=\log\left(V_t/V_0\right)=\int_0^t c(r)\,\d r$
is strictly increasing.  Let $t=t(\theta)$ denote its inverse and set
$\tilde X_\theta:=X_{t(\theta)}$.  Denote its mean curvature and unit
normal by $\tilde H_\theta$ and $\tilde\nu_\theta$, respectively.
Then
\begin{equation}
  \partial_\theta\tilde X_\theta
  =\frac{f_{t(\theta)}}{c(t(\theta))}\nu_{t(\theta)}
  =\tilde H_\theta^{-1}\tilde\nu_\theta,
\end{equation}
which is the inverse mean curvature flow equation.
\end{proof}

\begin{lemma}
\label{lem:inverse-bound}
Let $M^n$ and $N^n$ be compact connected smooth manifolds, possibly with
boundary, endowed with $C^1$ Riemannian metrics $g$ and $h$.  Let
$F:(M,g)\to(N,h)$ be a bi-Lipschitz homeomorphism satisfying
\[
 F_\#\frac{\d\vol_g}{\vol_g(M)}
 =\frac{\d\vol_h}{\vol_h(N)}.
\]
Set $b=\vol_h(N)/\vol_g(M)$ and $L=\Lip(F)$.  Then $J_F=b$ almost everywhere
with respect to $\vol_g$
and
\begin{equation}\label{eq:inverse-bound}
 \frac{b}{L^{n-1}}\dist_g(x,y)
 \leq \dist_h(Fx,Fy)
 \leq L\dist_g(x,y), \qquad \forall\,x,y\in M.
\end{equation}
The same conclusion holds when $M$ and $N$ are compact
full-dimensional geodesically convex subsets of $\Sph^n$, equipped with
their intrinsic length metrics and the restricted round volume measures. 
It also holds in dimension two, with Riemannian volume replaced by
$\Hh^2$, for bi-Lipschitz homeomorphisms between compact Alexandrov surfaces
without boundary. 
\end{lemma}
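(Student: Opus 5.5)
The upper bound in \eqref{eq:inverse-bound} is just the definition of $L=\Lip(F)$, so the work lies in the Jacobian identity and the lower bound. The plan is to work pointwise at points of differentiability. By Rademacher's theorem applied in charts, $F$ and $F^{-1}$ are differentiable almost everywhere. Since bi-Lipschitz maps send null sets to null sets, for $\vol_g$-a.e.\ $x$ both $F$ is differentiable at $x$ and $F^{-1}$ is differentiable at $F(x)$. At such points $dF_x$ is invertible with inverse $d(F^{-1})_{F(x)}$.

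\textbf{Step 1: $J_F=b$ almost everywhere.} The area formula for the injective Lipschitz map $F$ gives, for every Borel $A\subseteq M$,
\[
 \vol_h(F(A))=\int_A J_F\,\d\vol_g .
\]
The pushforward hypothesis gives $\vol_h(F(A))=b\,\vol_g(A)$. Since both identities hold for all Borel $A$, we get $J_F=b$ $\vol_g$-a.e.

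\textbf{Step 2: the inverse differential.} Let $\sigma_1\ge\dots\ge\sigma_n>0$ be the singular values of $dF_x$ with respect to $g_x$ and $h_{F(x)}$. Then $\sigma_1=\|dF_x\|_{\mathrm{op}}\le L$, because the operator norm at a differentiability point is bounded by the Lipschitz constant for the length metrics. This uses that the metrics are $C^1$, so small balls are almost Euclidean. Consequently
\[
 \sigma_n=\frac{J_F}{\sigma_1\cdots\sigma_{n-1}}\ge \frac{b}{L^{n-1}} .
\]
Hence $\|d(F^{-1})_{F(x)}\|_{\mathrm{op}}=\sigma_n^{-1}\le L^{n-1}/b$ for a.e.\ point of $N$.

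\textbf{Step 3: from the a.e.\ derivative bound to a global Lipschitz bound.} This is the main obstacle: an a.e.\ derivative bound controls $\Lip(F^{-1})$ only if almost every curve meets the bad set in a null set. We will use the standard Fubini argument. Fix $p,q\in N$ and a near-minimizing curve $\gamma$ from $p$ to $q$. Perturb it into a family of curves parametrized by a small transversal ball, for example in charts covering $\gamma$, with endpoints also slightly perturbed. Since the exceptional set $E\subseteq N$ is null, Fubini shows that almost every curve in the family meets $E$ in a set of zero length. Along such a curve $c$, the map $F^{-1}\circ c$ is Lipschitz and its derivative is $d(F^{-1})\dot c$ a.e. Therefore
\[
 \operatorname{length}_g(F^{-1}\circ c)\le \frac{L^{n-1}}{b}\,\operatorname{length}_h(c).
\]
Letting the perturbation shrink, and using continuity of $F^{-1}$ at the endpoints, gives $\dist_g(F^{-1}p,F^{-1}q)\le (L^{n-1}/b)\,\dist_h(p,q)$. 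Setting $p=Fx$ and $q=Fy$ yields the lower bound in \eqref{eq:inverse-bound}.

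\textbf{Boundary and convex subsets.} When $N$ has boundary, near-minimizing curves may run along $\partial N$. We will first push them slightly into the interior, which changes their length by an arbitrarily small amount, and then apply the transversal family. For geodesically convex subsets of $\Sph^n$ the length metric is the restricted spherical distance, and curves can be moved into the interior by contracting toward an interior point. The area formula and Rademacher's theorem apply verbatim.

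\textbf{Alexandrov surfaces.} For compact Alexandrov surfaces without boundary, we replace the Riemannian tools by their metric counterparts. Alexandrov surfaces are almost everywhere Riemannian in the sense of Otsu--Shioya: off a $\Hh^2$-null singular set they carry a continuous Riemannian structure, and $\Hh^2$ is its volume. The area formula (Kirchheim) and metric differentials provide Steps 1--2. Step 3 uses the same Fubini argument in local bi-Lipschitz charts, with the singular set absorbed into the exceptional null set.
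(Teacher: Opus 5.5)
Your Steps~1 and~2 coincide with the paper's. The area formula and the pushforward identity give $J_F=b$, and the singular values of $\mathrm dF_x$ give $\norm{\mathrm d(F^{-1})}\leq L^{n-1}/b$ almost everywhere. In the smooth and convex cases your Step~3 is the paper's argument in different form. The paper applies the $W^{1,\infty}$-to-Lipschitz principle, proved by the Euclidean argument in interior and boundary charts along an almost minimizing curve, to the scalar functions $u_z(y)=\dist_g(F^{-1}(y),z)$. You run the same Fubini argument on $F^{-1}$ itself. Both versions use continuity of $h$ in charts to pass to the limit in the lengths of the perturbed curves.

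The Alexandrov case is where you genuinely depart from the paper, and there the phrase ``the same Fubini argument in local bi-Lipschitz charts'' hides a real gap.

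\emph{What the paper does.} It performs Steps~1--2 in strainer charts covering the regular set. For the upgrade it moves no curves. It applies the theorem of Kuwae, Machigashira and Shioya \cite{KuwaeMachigashiraShioya}, that the intrinsic distance of the canonical Dirichlet form equals $\dist_N$ (the Sobolev-to-Lipschitz property), to $(b/L)u_z$.

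\emph{Why your limit fails as stated.} In an arbitrary bi-Lipschitz chart $\phi$ of an Alexandrov surface, lengths are computed through Kirchheim's metric differential $\mathrm{md}\,\phi_x$ \cite{Kirchheim}. It exists only almost everywhere and has no continuity in $x$, since the Otsu--Shioya tensor lives off a singular set that may be dense. So the lengths of translated curves need not converge to that of $\gamma$. Your ``let the perturbation shrink'' step then only yields the constant $C^2L/b$, where $C$ is the bi-Lipschitz constant of the chart.

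\emph{How to close your route.}
\begin{enumerate}
\item Use strainer charts, which are $(1+\tau(\delta))$-bi-Lipschitz onto Euclidean domains near $(2,\delta)$-strained points \cite{BuragoGromovPerelman}. Translated chart segments then give the local constant $(1+\tau(\delta))^2L/b$.
\item In dimension two, the points that are not $(2,\delta)$-strained are finitely many, because cone-angle deficits are summable (bounded by the total positive curvature). Cut a near-minimizing curve at its first entry into and last exit from $r$-balls around these points. By the a priori Lipschitz bound for $F^{-1}$, this costs only $O(r)$. Let $r\to0$ and then $\delta\to0$.
\item Note that $\mathrm{md}\,\phi_x$ and $\mathrm{md}(F^{-1}\circ\phi)_x$ are inner-product norms at almost every point, because tangent cones are Euclidean on the regular set. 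Without this, your singular-value step has no meaning.
\end{enumerate}
With these additions your approach is elementary and avoids Dirichlet-form theory. The paper's citation instead absorbs both the singular points and the curve-family argument.
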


\begin{proof}
\noindent\emph{The smooth case.}
For every Borel set $E\subseteq M$, the pushforward identity
and the area formula yield
$\int_E J_F\,\d\vol_g=\vol_h(F(E))=b\vol_g(E)$
and hence $J_F=b$ almost everywhere with respect to $\vol_g$.  Denote
$G=F^{-1}$.  Since
bi-Lipschitz maps preserve null
sets, $F$ and $G$ are differentiable at corresponding points outside a null
set, with $\d G_{F(x)}=(\d F_x)^{-1}$.  If
$\sigma_1\geq\cdots\geq\sigma_n>0$ are the singular values of $\d F_x$, then
\begin{equation}
   \prod_{i=1}^n\sigma_i=b,
  \qquad \sigma_i\leq L,
  \qquad \norm{\d G_{F(x)}}=\sigma_n^{-1}\leq\frac{L^{n-1}}b.
\end{equation}
For fixed $z\in M$, define the Lipschitz function
$u_z:N\to\R$ by $u_z(y):=\dist_g(G(y),z)$.
Then $|\nabla u_z|_h\leq L^{n-1}/b$ at $\vol_h$-almost every point.
The standard $W^{1,\infty}$-to-Lipschitz principle for the intrinsic Riemannian
distance, obtained by applying the Euclidean result in interior and boundary
charts along an almost minimizing curve, shows that $u_z$ is
$L^{n-1}/b$-Lipschitz.  Taking $z=G(y')$ then proves
\eqref{eq:inverse-bound}.

\smallskip
\noindent\emph{The convex case.}
The boundaries of $M$ and $N$ have zero round volume, and $F$ maps their
interiors onto each other.  Applying the area formula and the pushforward
identity on the interiors gives $J_F=b$ almost everywhere.  Thus the
singular-value argument from the smooth case, with $G=F^{-1}$, yields
$\norm{\d G}\leq L^{n-1}/b$ at
$\d\vol_{\mathrm{can}}$-almost every point of $\operatorname{int}(N)$.
For fixed $z\in\operatorname{int}(M)$, define
$u_z:\operatorname{int}(N)\to\R$ by
$u_z(y):=\dist_M(G(y),z)$.  The same chain-rule argument gives
$|\nabla u_z|_{\mathrm{can}}\leq L^{n-1}/b$ at
$\d\vol_{\mathrm{can}}$-almost every point of $\operatorname{int}(N)$.
Since $N$ is geodesically convex, the $W^{1,\infty}$-to-Lipschitz argument
used above shows that $u_z$ is $L^{n-1}/b$-Lipschitz with respect to
$\dist_N$.  Taking $z=G(y')$ proves the lower bound for
$y,y'\in\operatorname{int}(N)$.  Extending the lower bound to all of $N$ by continuity proves
\eqref{eq:inverse-bound}. 

\smallskip
\noindent\emph{The Alexandrov case.}
For Alexandrov surfaces, the regular sets have full $\Hh^2$-measure and
admit countably many bi-Lipschitz strainer charts
\cite[Chapter~10]{BuragoBuragoIvanov}.  Since bi-Lipschitz maps preserve
$\Hh^2$-null sets, Rademacher's theorem
\cite[Corollary~2.14]{Bertrand}, applied coordinatewise in strainer charts,
gives a tangent differential $\d F_x$ at almost every regular point.  Choose
a countable bi-Lipschitz Borel partition
$M_i\subseteq\operatorname{Reg}(M)$ with parametrizations
$\phi_i:A_i\subseteq\mathbb R^2\to M_i$.  At almost every regular point,
the tangent spaces are Euclidean, so the chain rule and the definition of
Kirchheim's Jacobian \cite[Definition 5]{Kirchheim} show that the quotient of the area densities of
$F\circ\phi_i$ and $\phi_i$ is $J_F:=|\det\d F|$.

Thus for every Borel set $E\subseteq M$, applying Kirchheim's metric area formula \cite[Corollary~8]{Kirchheim} to $F\circ\phi_i$ and $\phi_i$, and then summing over $i$, we obtain 
\begin{equation}
 \int_E J_F\,\d\Hh_M^2=\Hh_N^2(F(E))=b\Hh_M^2(E).
\end{equation}
Hence $J_F=b$ almost everywhere with respect to $\Hh_M^2$.  Applying
Rademacher's theorem similarly to $G=F^{-1}$ and using the Euclidean chain
rule in strainer charts, we obtain at almost every corresponding pair
$y=F(x)$ of regular points, $\d G_y=(\d F_x)^{-1}$.  If
$\sigma_1\geq\sigma_2>0$ are the singular values of
$\d F_x:T_xM\to T_yN$, then
\begin{equation}
  \sigma_1\sigma_2=b,\qquad
  \sigma_1\leq L,\qquad
  \sigma_2^{-1}\leq\frac{L}{b}.
\end{equation}
Consequently, for fixed $z\in M$, the Lipschitz function
$u_z(y):=\dist_M(G(y),z)$ belongs to $W^{1,2}(N)\cap C(N)$.  The chain rule on the regular set yields 
$|\nabla u_z|\leq L/b$ at $\Hh_N^2$-almost every point.  By \cite[Theorem~7.1 and Proposition~7.2]{KuwaeMachigashiraShioya}, the
intrinsic-distance characterization, applied to $(b/L)u_z$, shows that
$u_z$ is $L/b$-Lipschitz.  In RCD terminology, this is the
Sobolev-to-Lipschitz principle; see also \cite{GigliHan} for the independence
of weak upper gradients from the Sobolev exponent.  Taking $z=G(y')$ proves
\eqref{eq:inverse-bound}. 
\end{proof}

The next proposition passes the finite-time maps to a terminal model.

\begin{proposition}[Endpoint contraction]
\label{prop:endpoint-contraction}
Let $M^n$ and $Z^n$ be compact connected manifolds of the same dimension,
either both closed or both with boundary.  Suppose the Riemannian
metrics $g_t$, $0\leq t<\tau$, depend continuously on $t$ and smoothly for
$t>0$, and satisfy
\begin{equation}\label{eq:endpoint-growth-hypotheses}
 g_t\geq g_s\quad s\leq t,
 \qquad
 \d\vol_{g_t}=e^t\d\vol_{g_0},
\end{equation}
where $0<\tau<\infty$.  Suppose there exist diffeomorphisms
$R_t:Z\to M$ for all $t$ sufficiently close to $\tau$ such that
$q_t:=R_t^*g_t\to q_\tau$ uniformly as quadratic forms, where $q_\tau$ is a
Riemannian metric on $Z$.  Then there exists a sequence $t_j\uparrow\tau$ such
that the maps $R_{t_j}$ converge uniformly to a bi-Lipschitz homeomorphism
\[
 R:(Z,q_\tau)\longrightarrow(M,g_0)
\]
which preserves normalized volume and is $1$-Lipschitz.  Moreover, if
$g_s\geq(1+\delta)g_0$
for some $s\in(0,\tau)$ and $\delta>0$, then
$\Lip(R)\leq(1+\delta)^{-1/2}<1$.  Set
$b:=\vol_{g_0}(M)/\vol_{q_\tau}(Z)=e^{-\tau}$ and $L:=\Lip(R)$.  Then
\begin{equation}\label{eq:endpoint-sharp-bilip}
 \frac{b}{L^{n-1}}\dist_{q_\tau}(x,y)
 \leq \dist_{g_0}(Rx,Ry)
 \leq L\dist_{q_\tau}(x,y),
 \qquad \forall\,x,y\in Z.
\end{equation}
\end{proposition}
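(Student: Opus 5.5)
The plan is to realize $R$ as a uniform limit of the maps $R_t:(Z,q_t)\to(M,g_0)$, which are contractions by Proposition~\ref{prop:mm-flow}, and then to pass the Lipschitz bound, the pushforward identity and injectivity to the limit.

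\emph{Step 1: uniform Lipschitz bounds and compactness.} Since $g_t\geq g_0$, the identity $(M,g_t)\to(M,g_0)$ is $1$-Lipschitz. Because $R_t$ is an isometry $(Z,q_t)\to(M,g_t)$, the map $R_t:(Z,q_t)\to(M,g_0)$ is $1$-Lipschitz. Uniform convergence $q_t\to q_\tau$ gives $q_t\leq(1+\epsilon_t)q_\tau$ with $\epsilon_t\to0$, so $\dist_{q_t}\leq(1+\epsilon_t)^{1/2}\dist_{q_\tau}$. Hence the family $R_t:(Z,\dist_{q_\tau})\to(M,\dist_{g_0})$ is equi-Lipschitz with constants tending to $1$. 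Arzel\`a--Ascoli then yields $t_j\uparrow\tau$ with $R_{t_j}\to R$ uniformly and $\Lip(R)\leq1$. If $g_s\geq(1+\delta)g_0$, then $g_t\geq g_s\geq(1+\delta)g_0$ for $t\geq s$, and the constant improves to $(1+\delta)^{-1/2}$.

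\emph{Step 2: measure preservation.} We have $R_t^*\d\vol_{g_t}=\d\vol_{q_t}$ and $\d\vol_{g_t}=e^t\d\vol_{g_0}$. Therefore $(R_t)_\#\bigl(\d\vol_{q_t}/\vol_{q_t}(Z)\bigr)=\d\vol_{g_0}/\vol_{g_0}(M)$, with $\vol_{q_t}(Z)=e^t\vol_{g_0}(M)$. Uniform convergence of the metrics gives $\d\vol_{q_t}\to\d\vol_{q_\tau}$ as densities in a fixed background, uniformly. Testing against continuous $\varphi$ on $M$, we use $\varphi\circ R_{t_j}\to\varphi\circ R$ uniformly and pass to the limit to get $R_\#\bar\mu_{q_\tau}=\bar\mu_{g_0}$. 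This also gives $\vol_{q_\tau}(Z)=e^\tau\vol_{g_0}(M)$, i.e. $b=e^{-\tau}$.

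\emph{Step 3: bi-Lipschitz homeomorphism (main obstacle).} Lipschitz limits of homeomorphisms can collapse, so injectivity needs a uniform lower bound. I would use the inverse bound of Lemma~\ref{lem:inverse-bound} applied to each $R_t$ before the limit. Each $R_t:(Z,q_t)\to(M,g_0)$ is a diffeomorphism that preserves normalized volume and has Lipschitz constant at most $1$. Lemma~\ref{lem:inverse-bound} therefore gives $\dist_{g_0}(R_tx,R_ty)\geq e^{-t}\dist_{q_t}(x,y)$. Since $\dist_{q_t}\geq(1-\epsilon_t)^{1/2}\dist_{q_\tau}$, passing to the limit yields $\dist_{g_0}(Rx,Ry)\geq e^{-\tau}\dist_{q_\tau}(x,y)$. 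Thus $R$ is injective and bi-Lipschitz onto its image. Surjectivity follows because the image is compact and carries full measure $\bar\mu_{g_0}$ (full support). Alternatively, degree theory applies: the $R_{t_j}$ are homotopic to $R$, and the boundary-to-boundary structure is preserved in the limit. The delicate points are the uniform comparison $\dist_{q_t}$ versus $\dist_{q_\tau}$ in the boundary case, which follows from the quadratic-form comparison via lengths of curves, and the fact that the limit homeomorphism maps boundary to boundary.

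\emph{Step 4: sharp bi-Lipschitz bound.} Now $R$ is a bi-Lipschitz homeomorphism $(Z,q_\tau)\to(M,g_0)$ that preserves normalized volume, with $L=\Lip(R)$. Applying Lemma~\ref{lem:inverse-bound} directly to $R$ gives \eqref{eq:endpoint-sharp-bilip} with $b=\vol_{g_0}(M)/\vol_{q_\tau}(Z)=e^{-\tau}$. This improves the crude constant from Step 3 to $b/L^{n-1}$.
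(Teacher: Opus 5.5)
Your proposal is correct and follows the paper's proof essentially step for step. You derive the finite-time bounds $e^{-t}\dist_{q_t}\leq\dist_{g_0}(R_t\cdot,R_t\cdot)\leq\dist_{q_t}$ from metric monotonicity and Lemma~\ref{lem:inverse-bound}, extract a limit by Arzel\`a--Ascoli, pass the distance bounds and pushforward identity to the limit, obtain surjectivity from full support, and finish by applying Lemma~\ref{lem:inverse-bound} to $R$ itself; the degree-theory alternative you mention is not needed.
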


\begin{proof}
\noindent\emph{Step 1: finite-time estimates.}
Since $q_t=R_t^*g_t$, the volume identity gives
\begin{equation}\label{eq:endpoint-finite-volume}
 R_t^*\d\vol_{g_0}=e^{-t}\d\vol_{q_t},
 \qquad
 (R_t)_\#\frac{\d\vol_{q_t}}{\vol_{q_t}(Z)}
 =\frac{\d\vol_{g_0}}{\vol_{g_0}(M)}.
\end{equation}
Moreover, by metric monotonicity in \eqref{eq:endpoint-growth-hypotheses},
$R_t^*g_0\leq q_t$, while
$\vol_{g_0}(M)/\vol_{q_t}(Z)=e^{-t}$.
Hence Lemma~\ref{lem:inverse-bound} gives
\begin{equation}\label{eq:endpoint-finite-bilip}
 e^{-t}\dist_{q_t}(x,y)
 \leq \dist_{g_0}(R_tx,R_ty)
 \leq \dist_{q_t}(x,y),
 \qquad \forall\,x,y\in Z.
\end{equation}
The uniform convergence $q_t\to q_\tau$ means that, for
$\varepsilon_t\downarrow0$,
$(1-\varepsilon_t)q_\tau\leq q_t\leq(1+\varepsilon_t)q_\tau$ near $\tau$.
The corresponding length bounds and the local determinant formula for volume
therefore imply
\begin{equation}
   \dist_{q_t}\to\dist_{q_\tau}
  \quad\text{uniformly on }Z\times Z,
  \qquad
  \frac{\d\vol_{q_t}}{\vol_{q_t}(Z)}
  \rightharpoonup
  \frac{\d\vol_{q_\tau}}{\vol_{q_\tau}(Z)} \quad \text{weakly}.
\end{equation}
Taking total masses in \eqref{eq:endpoint-finite-volume} and passing to the
limit yields $\vol_{q_\tau}(Z)=e^\tau\vol_{g_0}(M)$ and hence $b=e^{-\tau}$.

\smallskip
\noindent\emph{Step 2: passage to the endpoint.}
Choose a sequence $t_j\uparrow\tau$.  The preceding metric convergence and
\eqref{eq:endpoint-finite-bilip} give a uniform Lipschitz bound for
$R_{t_j}$ with respect to the fixed metrics $q_\tau$ and $g_0$.
By the Arzel\`a--Ascoli theorem, after passing to a subsequence,
$R_{t_j}$ converges uniformly to a map $R:Z\to M$.  Therefore,
passing to the limit in \eqref{eq:endpoint-finite-volume} and
\eqref{eq:endpoint-finite-bilip} yields
\begin{equation}
 b\,\dist_{q_\tau}(x,y)
 \leq \dist_{g_0}(Rx,Ry)
 \leq \dist_{q_\tau}(x,y),
 \quad \forall\,x,y\in Z, \qquad R_\#\frac{\d\vol_{q_\tau}}{\vol_{q_\tau}(Z)}
 =\frac{\d\vol_{g_0}}{\vol_{g_0}(M)}.
\end{equation}
The distance bound makes $R$ injective.  Its image is compact, and the
pushforward identity together with the full support of $\d\vol_{g_0}$ makes
it surjective.  Thus $R$ is a bi-Lipschitz homeomorphism.  Applying
Lemma~\ref{lem:inverse-bound} to $R$, with $L=\Lip(R)$, yields
\eqref{eq:endpoint-sharp-bilip}.

\smallskip
\noindent\emph{Step 3: strict contraction.}
Finally, if $g_s\geq(1+\delta)g_0$ for some $s\in(0,\tau)$, then for
$t\geq s$,
$R_t^*g_0\leq(1+\delta)^{-1}R_t^*g_s
\leq(1+\delta)^{-1}q_t$,
which implies $L\leq(1+\delta)^{-1/2}<1$.
\end{proof}

\section{Closed contractions}\label{sec:closed}

\subsection{Strictly convex spherical hypersurfaces}

We prove the closed part of Theorem~\ref{thm:extrinsic-unified}.  Let
$\Sigma_0\subseteq\Sph^{n+1}$ be as in part~(i), and let
$X_0:\Sigma_0\hookrightarrow\Sph^{n+1}$ be the inclusion.  By
\cite[Theorem~1.1]{doCarmoWarner}, $\Sigma_0$ is diffeomorphic to $\Sph^n$
and bounds a convex body in an open hemisphere.  Choose the outer unit
normal $\nu$ and use the convention
\[
 \overline\nabla_{X_i}\nu=h_i{}^kX_k,
 \qquad
 h_{ij}=g_{jk}h_i{}^k,
\]
where $X_i:=\d X_0(\partial_i)$ in local coordinates.
With this choice, the second fundamental form $h$ is positive definite.  We
write $H:=\operatorname{tr}_g h=g^{ij}h_{ij}
=\kappa_1+\cdots+\kappa_n$
for the mean curvature.

Consider the inverse mean curvature flow starting from $X_0$.
\begin{equation}\label{eq:spherical-imcf}
 \partial_t X=H_t^{-1}\nu_t,
 \qquad X(0,\cdot)=X_0.
\end{equation}
The curvature function $H(\kappa)=\kappa_1+\cdots+\kappa_n$ satisfies
Assumption~1.3(i) of \cite{MakowskiScheuer}; for its inverse concavity, see
\cite[Corollary~2.2]{Andrews}.  Hence
\cite[Theorem~1.4]{MakowskiScheuer}, applied with $F=H$ and $p=1$, yields a
unique solution on a finite interval $[0,\tau)$; see also
\cite{GerhardtSphere}.  Moreover, there exists $t_0\in(0,\tau)$ such that, for
every $t\in[t_0,\tau)$, $\Sigma_t=X_t(\Sigma_0)$ is the radial graph
$r=u_t(z)$, $z\in\Sph^n$, over a fixed equator in geodesic polar coordinates,
and
\begin{equation}\label{eq:spherical-graph-convergence}
  u_t\to\frac{\pi}{2}
  \quad\text{in }C^{1,\beta}(\Sph^n)
  \quad\text{as }t\uparrow\tau,
  \quad\text{for every }0<\beta<1.
\end{equation}

Write $X_t=X(t,\cdot)$ and set
$g_t:=X_t^*g_{\Sigma_t}$ and $\d\vol_t:=X_t^*\d\vol_{\Sigma_t}$.
By \cite[Lemma 4.6]{MakowskiScheuer}, strict convexity is preserved
along the flow.  Proposition~\ref{prop:imcf-selection}
therefore yields
\begin{equation}
  \partial_t g_t=2H_t^{-1}h_t>0,
  \qquad
  \partial_t\d\vol_t=\d\vol_t.
\end{equation}
Consequently,
\begin{equation}\label{eq:spherical-monotonicity}
 g_t\geq g_s\quad 0\leq s\leq t<\tau,
 \qquad
 \d\vol_t=e^t\d\vol_0,
 \qquad
 |\Sigma_t|=e^t|\Sigma_0|.
\end{equation}

\begin{proof}[Proof of Theorem~\ref{thm:extrinsic-unified}~(i)]
Identify the limiting equator isometrically with
$(\Sph^n,g_{\mathrm{can}})$.  For $t\in[t_0,\tau)$, let
$Q_t:\Sph^n\to\Sigma_t$ be the radial parametrization determined by $u_t$.
The spherical metric in polar coordinates is
$\d r^2+\sin^2r\,g_{\mathrm{can}}$, hence
\begin{equation}
   q_t:=Q_t^*g_{\Sigma_t}
  =\d u_t\otimes\d u_t+\sin^2(u_t)g_{\mathrm{can}}.
\end{equation}
By \eqref{eq:spherical-graph-convergence},
$q_t\to g_{\mathrm{can}}$ uniformly as quadratic
forms.  Consequently, $\dist_{q_t}\to\dist_{\mathrm{can}}$
uniformly, the density of $\d\vol_{q_t}$ with respect to
$\d\vol_{\mathrm{can}}$ tends uniformly to one, and
$|\Sigma_t|\to|\Sph^n|$.  Together with
\eqref{eq:spherical-monotonicity}, this yields
$b:=|\Sigma_0|/|\Sph^n|=e^{-\tau}$.

For $t\in[t_0,\tau)$, define
\[
 T_t:=X_t^{-1}\circ Q_t:\Sph^n\longrightarrow\Sigma_0.
\]
Then $T_t^*g_t=q_t$.  Fix $s\in(0,\tau)$.  Since $\Sigma_0$ is compact and
$g_s-g_0=\int_0^s 2H_r^{-1}h_r\,\d r>0$,
there exists $\delta>0$ such that
$g_s\geq(1+\delta)g_0$.
Applying Proposition~\ref{prop:endpoint-contraction} with $M=\Sigma_0$,
$Z=\Sph^n$, and $R_t=T_t$, we obtain a
bi-Lipschitz homeomorphism
\begin{equation*}
   T:(\Sph^n,g_{\mathrm{can}})\longrightarrow(\Sigma_0,g_0)
\end{equation*}
with $L:=\Lip(T)\leq(1+\delta)^{-1/2}<1$, satisfying
\begin{equation}
   \frac{b}{L^{n-1}}\dist_{\mathrm{can}}(z,z')
  \leq \dist_{g_0}(Tz,Tz')
  \leq L\dist_{\mathrm{can}}(z,z'),
  \quad \forall\,z,z'\in\Sph^n,\qquad T_\#\frac{\d\vol_{\mathrm{can}}}{|\Sph^n|}
  =\frac{\d\vol_0}{|\Sigma_0|},
\end{equation}
which proves Theorem~\ref{thm:extrinsic-unified}~(i).
\end{proof}

\subsection{E. Milman's conjecture for two-spheres}

The intrinsic theorem follows from the closed extrinsic result.  The
strict-curvature case uses Lu's solution of the spherical Weyl problem
\cite{Lu}.  The next lemma gives the regularity needed to apply the smooth
flow.  It is a local form of Pogorelov's regularity theorem for convex surfaces
in elliptic space; see
\cite[Chapter~VII, Section~4, Theorem~1]{PogorelovElliptic}.
\begin{lemma}\label{lem:automatic-regularity}
Let $(M^2,g)$ be a smooth Riemannian surface and let
$X\in C^2(M,\Sph^3)$ be a $C^2$ isometric immersion.  If $K_g>1$, then $X$ is
smooth.
\end{lemma}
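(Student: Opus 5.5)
The plan is to reduce to the Euclidean case through the cone construction and then apply elliptic regularity for the Gauss equation, written as a Monge--Ampère equation. Since the statement is local and smoothness is a local property, fix $p\in M$ and a small neighborhood $U$. After a rotation of $\Sph^3$, the image $X(U)$ lies in an open hemisphere centered at a point $e$. Let $P$ be the gnomonic (central) projection from this hemisphere onto the tangent hyperplane $\{x_4=1\}\cong\R^3$. The map $P$ is a smooth diffeomorphism and sends great circles to straight lines. Hence $Y:=P\circ X\in C^2(U,\R^3)$ is a $C^2$ immersion, and it is locally strictly convex precisely when $X$ is. The Gauss equation in $\Sph^3$ gives $\det h = (K_g-1)\det g>0$ for the second fundamental form $h$ of $X$. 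Since $X\in C^2$, $h$ is continuous and hence definite near $p$. After shrinking $U$, $X(U)$ is a strictly convex graph.

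The next step is to write the image as a graph in a chart adapted to the sphere. Near $X(p)$, represent $X(U)$ as a radial graph $r=u(z)$ over a small ball in an equator, as in the polar coordinates used earlier in this section, or equivalently as a Euclidean graph $x_3=f(x_1,x_2)$ after gnomonic projection. Then $f\in C^2$ and $D^2f>0$. The induced metric $g$ has smooth coefficients in the coordinates $(x_1,x_2)$, but those coordinates are themselves only $C^2$ functions on $M$. To avoid this circularity, I will instead use the Darboux-type equation in intrinsic coordinates. Take smooth $g$-harmonic or normal coordinates on $M$, and set $\phi:=\langle X,e\rangle$, which is a $C^2$ function of those coordinates. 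For surfaces in $\Sph^3$, the classical identity (Pogorelov's equation) reads
\[
 \det\bigl(\nabla^2_g\phi+\phi\,g\bigr)=(K_g-1)\bigl(1-\phi^2-|\nabla_g\phi|_g^2\bigr)\det g .
\]
This follows because $\nabla^2\phi+\phi g=-\langle\nu,e\rangle h$ and $|\nabla\phi|^2+\phi^2+\langle\nu,e\rangle^2=1$. This is a Monge--Ampère equation for $\phi$ with smooth dependence on $(x,\phi,\nabla\phi)$. The right-hand side is positive near $p$ when $e$ is chosen so that $\langle\nu,e\rangle\neq0$ at $p$, for example $e=\nu(p)$. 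The positivity of $\nabla^2\phi+\phi g$ then gives ellipticity, up to a sign.

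The core regularity step is as follows. A $C^2$ solution of a uniformly elliptic Monge--Ampère equation with smooth structure is $C^{2,\alpha}$ for every $\alpha<1$. One obtains this by Evans--Krylov/Caffarelli interior estimates; alternatively, since $D^2\phi$ is continuous and the equation is concave in the Hessian, the $C^{2,\alpha}$ result for continuous-Hessian solutions applies. Then Schauder bootstrap follows: differentiating the equation gives a linear elliptic equation with $C^{\alpha}$ coefficients for difference quotients of $\phi$. Thus $\phi\in C^{k,\alpha}$ for all $k$. Doing this for three linearly independent choices of $e$ near $\nu(p)$ (an open condition) shows that every coordinate $\langle X,e_i\rangle$ is smooth. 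Using $|X|=1$, this gives $X$ smooth near $p$.

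The main obstacle I expect is the first regularity gain from $C^2$ to $C^{2,\alpha}$. A merely $C^2$ solution does not directly fall under classical Schauder theory, so one must invoke the Evans--Krylov estimate (or Pogorelov's own argument) carefully. One must also verify the Pogorelov identity above with a consistent sign convention, so that the equation is elliptic on the convex side. Everything afterwards is standard bootstrap.
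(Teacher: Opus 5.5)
Your argument is correct in outline, but it takes a different route from the paper. The paper never writes down a PDE. After obtaining the spherical Gauss equation $\det(g^{-1}h)=K_g-1$ pointwise, it shrinks $U$ to a strictly convex cap in an open hemisphere and projects to elliptic space. It then cites Pogorelov's local regularity theorem for convex surfaces there to conclude that the image surface $F$ is smooth. That says nothing yet about the parametrization, so the paper needs a separate step showing that the $C^2$ local isometry $\pi\circ X:(U,g)\to F$ between smooth surfaces is smooth, via $\pi\circ X\circ\exp_p=\exp_{\pi(X(p))}\circ\d(\pi\circ X)_p$.

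You instead derive the Darboux-type Monge--Amp\`ere equation for $\phi=\langle X,e\rangle$ in smooth coordinates of $M$ and do the elliptic regularity yourself. In effect you prove the easy case of Pogorelov's theorem: $C^2$ regularity is already known, so uniform ellipticity is automatic. Because your coordinates are smooth on $M$, you also get smoothness of the map $X$ directly. This resolves exactly the circularity you identified, which the paper handles with the exponential-map step. Your route is more self-contained and should adapt to finite regularity of $g$ without the derivative loss in the cited form of Pogorelov's theorem. The paper's route is shorter on the page but leans on a classical theorem built for convex surfaces with no a priori regularity. Your first paragraph on gnomonic graphs is not used afterwards and can be dropped.

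Three points to tighten.

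\emph{The Gauss equation.} You treat $\det h=(K_g-1)\det g$ as classical, but for a $C^2$ immersion $h$ is only continuous, and the usual derivation uses third derivatives of $X$. This is why the paper invokes the low-regularity Gauss--Codazzi theorem of Chen--Li. In your setting you can supply it directly. Mollify $X$ in a chart and apply the codimension-two Gauss equation in $\R^4$ to $X_\varepsilon$. Then let $\varepsilon\to0$ in distributions. This works because $R_{1212}$ is linear in second derivatives of the metric plus terms built from first derivatives, and the latter converge uniformly since $g_\varepsilon\to g$ in $C^1$. The normal components of $\partial_{ij}X_\varepsilon$ also converge uniformly. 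Since both sides are continuous in the limit, the identity holds pointwise.

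\emph{The regularity gain.} The step you flag as the main obstacle is milder than you expect. Because $D^2\phi$ is continuous, difference quotients of $\phi$ solve linear uniformly elliptic equations whose coefficients have a modulus of continuity independent of the increment. Interior $W^{2,p}$ estimates then give $\phi\in W^{3,p}_{\mathrm{loc}}$ for every $p<\infty$, hence $\phi\in C^{2,\alpha}$, without Evans--Krylov. Schauder bootstrapping then proceeds as you say.

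\emph{The coordinate functions.} Take four linearly independent $e_i$ near $\nu(p)$; any open subset of $\Sph^3$ spans $\R^4$. Using three plus $|X|=1$ would additionally require the complementary direction not to be orthogonal to $X(p)$.
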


\begin{proof}
Fix $p\in M$ and choose a $C^1$ unit normal on a simply connected
neighborhood $U$.  Regard $X|_U$ as a $C^2$ isometric immersion into
$\R^4$.  The low-regularity Gauss--Codazzi theorem
\cite[Theorem~5.2]{ChenLi} gives its Euclidean Gauss equation in the sense of
distributions.  Splitting the Euclidean second fundamental form into its
radial component and its component $h$ normal to $X(U)$ in $\Sph^3$ gives the
spherical Gauss equation distributionally.  Since $g$ is smooth and $h$ is
continuous, both sides are continuous; hence the identity holds pointwise:
\begin{equation}\label{eq:spherical-gauss-equation}
 \det(g^{-1}h)=K_g-1>0.
\end{equation}
After reversing the normal, shrink $U$ so that $h>0$ and $X|_U$ is a graph
in ambient normal coordinates.  By positive definiteness of $h$ and Taylor's
formula, after shrinking $U$ once more this graph lies strictly on one side
of each tangent plane.  Thus, $X(U)$ is a strictly convex cap contained in an open hemisphere $V$.  Let
$\pi:\Sph^3\to\mathbb E^3=\Sph^3/\{\pm1\}$ be the quotient to elliptic space.
The map $\pi|_V$ is a diffeomorphism, so $\pi(X(U))$ is a strictly convex cap
in $\mathbb E^3$.  If $F=\pi(X(U))$, then
\[
 (\pi\circ X)^*(g_{\mathbb E^3}|_F)=g|_U,
 \qquad
 K_F\circ(\pi\circ X)=K_g|_U>1.
\]

Pogorelov's local regularity theorem for convex surfaces in elliptic space
\cite[Chapter~VII, Section~4, Theorem~1]{PogorelovElliptic} states that $F$
is of class $C^{k-1}$ if the metric is $C^{k}$ for $k\geq5$.  Since $g$ is
smooth, $F$ is smooth.  The map
$\pi\circ X:(U,g)\to F$ is a $C^2$ local Riemannian isometry between
smooth surfaces.  After restricting to normal neighborhoods, it satisfies
\[
 \pi\circ X\circ\exp_p
 =\exp_{\pi(X(p))}\circ\d(\pi\circ X)_p,
\]
and hence is smooth.  Finally,
$X|_U=(\pi|_V)^{-1}\circ(\pi\circ X)$ is smooth.  Since $p$ is arbitrary,
the proof is complete.
\end{proof}

\begin{proof}[Proof of Theorem~\ref{thm:milman}]
\noindent\emph{Step 1: the strict-curvature case.}
Assume first that $K_g>1$.  Since $R_g=2K_g>2$,
\cite[Theorem~1.3]{Lu} gives a $C^2$ isometric embedding
\[
X:(M,g)\longrightarrow(\Sph^3,g_{\mathrm{can}}).
\]
Lemma~\ref{lem:automatic-regularity} implies that
$X\in C^\infty(M,\Sph^3)$.  Since $M$ is orientable, choose a global
normal.  Equation \eqref{eq:spherical-gauss-equation} implies that $h$ is
definite everywhere; connectedness and a reversal of the normal allow us to
take $h>0$.  Hence $X(M)$ is smooth, closed, connected, embedded, and
strictly convex.  Applying Theorem~\ref{thm:extrinsic-unified}~(i) to $X(M)$
and then composing
the resulting map with the isometry $X^{-1}:X(M)\to M$ yields the required map
with $L<1$.

\smallskip
\noindent\emph{Step 2: the limiting case.}
Now suppose only that $K_g\geq1$.  For $0<\varepsilon<1$, set
$g_\varepsilon=(1-\varepsilon)g$.
Then $K_{g_\varepsilon}=(1-\varepsilon)^{-1}K_g>1$,
$\d\vol_{g_\varepsilon}=(1-\varepsilon)\d\vol_g$, and
$a_\varepsilon=(1-\varepsilon)a$.
By Step~1, there exists a bi-Lipschitz homeomorphism
$T_\varepsilon:\Sph^2\to M$ with $L_\varepsilon:=\Lip(T_\varepsilon)<1$
which preserves normalized volume and satisfies
\[
 \frac{a_\varepsilon}{L_\varepsilon}\dist_{\mathrm{can}}(z,z')
 \leq\dist_{g_\varepsilon}(T_\varepsilon z,T_\varepsilon z')
 \leq L_\varepsilon\dist_{\mathrm{can}}(z,z').
\]
In particular,
\begin{equation}\label{eq:strict-approximation-map}
  a_\varepsilon \dist_{\mathrm{can}}(z,z')
  \leq \dist_{g_\varepsilon}(T_\varepsilon z,T_\varepsilon z')
  \leq \dist_{\mathrm{can}}(z,z'),\quad \forall\,z,z'\in\Sph^2, \qquad (T_\varepsilon)_\#
  \frac{\d\vol_{\mathrm{can}}}{|\Sph^2|}=\frac{\d\vol_g}{\vol_g(M)}.
\end{equation}
Since
$\dist_{g_\varepsilon}=\left(1-\varepsilon\right)^{1/2}\dist_g$, we obtain
\begin{equation}\label{eq:scaled-bilip}
  a\left(1-\varepsilon\right)^{\frac{1}{2}}\,\dist_{\mathrm{can}}(z,z')
  \leq \dist_g(T_\varepsilon z,T_\varepsilon z')
  \leq (1-\varepsilon)^{-\frac{1}{2}}\dist_{\mathrm{can}}(z,z'),
  \quad \forall\,z,z'\in\Sph^2.
\end{equation}

Choose a sequence $\varepsilon_j\downarrow0$.  The upper bound in
\eqref{eq:scaled-bilip} is uniformly bounded after discarding finitely
many terms.  By the Arzel\`a--Ascoli theorem, after passing to a subsequence,
$T_{\varepsilon_j}$ converges uniformly to a map $T:\Sph^2\to M$.  Hence,
passing to the limit in \eqref{eq:strict-approximation-map} and
\eqref{eq:scaled-bilip} yields
\begin{equation}
 a\,\dist_{\mathrm{can}}(z,z')
 \leq \dist_g(Tz,Tz')
 \leq \dist_{\mathrm{can}}(z,z'),
 \quad \forall\,z,z'\in\Sph^2, \qquad T_\#\frac{\d\vol_{\mathrm{can}}}{|\Sph^2|}
 =\frac{\d\vol_g}{\vol_g(M)}.
\end{equation}
The distance bounds make $T$ injective, and the measure identity makes it
surjective.  Thus $T$ is a bi-Lipschitz homeomorphism.  Applying
Lemma~\ref{lem:inverse-bound} with $L=\Lip(T)$ gives
\eqref{eq:intro-milman-bilip}.
\end{proof}

The smooth theorem extends to Alexandrov two-spheres through the conformal
approximation of Lin, Wang, and Xu \cite[Section~11]{LinWangXu}.

\begin{corollary}[Alexandrov two-spheres]\label{cor:alexandrov}
Let $(X,\dist_X)$ be a compact Alexandrov surface homeomorphic to $\Sph^2$ with
curvature bounded below by one.  Then there exists a bi-Lipschitz homeomorphism
\[
 T:(\Sph^2,g_{\mathrm{can}})\longrightarrow(X,\dist_X)
\]
such that
\[
 T_\#\frac{\d\vol_{\mathrm{can}}}{|\Sph^2|}
 =\frac{\Hh_X^2}{\Hh^2(X)},\qquad L:=\Lip(T)\leq1.
\]
With $a_X=\Hh^2(X)/|\Sph^2|$,
\begin{equation}\label{eq:alexandrov-bilip}
 \frac{a_X}{L}\dist_{\mathrm{can}}(z,z')
 \leq \dist_X(Tz,Tz')
 \leq L\dist_{\mathrm{can}}(z,z'),
 \qquad \forall\,z,z'\in\Sph^2.
\end{equation}
\end{corollary}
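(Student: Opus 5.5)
The plan is to approximate $(X,\dist_X)$ by smooth metrics on $\Sph^2$ with $K\geq1$, apply Theorem~\ref{thm:milman} to each approximant, and pass to the limit by Arzel\`a--Ascoli, as in Step~2 of the proof of Theorem~\ref{thm:milman}. Two inputs are needed from the conformal approximation of Lin, Wang, and Xu \cite[Section~11]{LinWangXu}. First, there are smooth metrics $g_j$ on $\Sph^2$ with $K_{g_j}\geq1$ and maps $\phi_j:(\Sph^2,g_j)\to X$; I expect these to be homeomorphisms (or $\varepsilon_j$-isometries) with distortion
\[
 \sup_{p,q}\bigl|\dist_{g_j}(p,q)-\dist_X(\phi_jp,\phi_jq)\bigr|\to0.
\]
Second, the normalized volumes should converge: $(\phi_j)_\#\bar\vol_{g_j}\rightharpoonup\Hh^2_X/\Hh^2(X)$ and $\vol_{g_j}(\Sph^2)\to\Hh^2(X)$. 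The weak convergence of measures follows from noncollapsed Gromov--Hausdorff convergence together with volume continuity under a lower curvature bound. If the approximants only satisfy $K\geq1-\varepsilon_j$, I would rescale by $(1-\varepsilon_j)$, exactly as in the limiting step of Theorem~\ref{thm:milman}.

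For each $j$, Theorem~\ref{thm:milman} gives $T_j:\Sph^2\to(\Sph^2,g_j)$ with $\Lip(T_j)\leq1$, normalized volume preserved, and $\dist_{g_j}(T_jz,T_jz')\geq a_j\dist_{\mathrm{can}}(z,z')$, where $a_j=\vol_{g_j}/|\Sph^2|$. Set $F_j:=\phi_j\circ T_j:\Sph^2\to X$. These maps are $1$-Lipschitz up to additive errors $\delta_j\to0$. An Arzel\`a--Ascoli argument for approximately Lipschitz maps into a compact space then yields a subsequence converging uniformly to some $T$. The limit satisfies
\[
 a_X\dist_{\mathrm{can}}(z,z')\leq\dist_X(Tz,Tz')\leq\dist_{\mathrm{can}}(z,z'),
\]
and $T_\#\sigma_2=\Hh^2_X/\Hh^2(X)$ by uniform convergence combined with the weak convergence of the pushed-forward measures. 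The lower bound makes $T$ injective, and full support of $\Hh^2_X$ makes it surjective, so $T$ is a bi-Lipschitz homeomorphism. Finally, applying the Alexandrov case of Lemma~\ref{lem:inverse-bound} with $L=\Lip(T)$ and $b=a_X$ sharpens the lower bound to $a_X/L$, which is \eqref{eq:alexandrov-bilip}.

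The main obstacle is the approximation step: producing smooth $K\geq1$ metrics whose uniform distance distortion to $X$ tends to zero and whose normalized volumes converge, and not merely Gromov--Hausdorff convergence. I would try first to use the Lin--Wang--Xu conformal mollification $g_j=e^{2u_j}g_{\mathrm{can}}$ in a uniformization chart of $X$, with $\phi_j$ the identity of the underlying sphere. With that choice, uniform convergence of the distance functions and weak convergence of the area measures (Alexandrov's theory of bounded-curvature surfaces) give exactly the two inputs above.
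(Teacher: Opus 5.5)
Your proposal is correct and follows essentially the same route as the paper, which fixes the Lin--Wang--Xu conformal homeomorphism $\Phi:\Sph^2\to X$ and uses their heat-regularized metrics $g_j$ with $K_{g_j}\geq1$, uniform convergence of distances and total-variation convergence of volumes. It then composes the maps from Theorem~\ref{thm:milman} with $\Phi$, passes to the limit by Arzel\`a--Ascoli, and invokes the Alexandrov case of Lemma~\ref{lem:inverse-bound}. Your rescaling fallback for $K\geq1-\varepsilon_j$ is not needed, because those approximants already satisfy $K_{g_j}\geq1$.
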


\begin{proof}
\noindent\emph{Step 1: smooth approximation.}
Using the unit round metric on $\mathbb{CP}^1$, identify it isometrically with
$(\Sph^2,g_{\mathrm{can}})$.  Let $\Phi:\Sph^2\to X$ be the conformal
homeomorphism of \cite[Notation~11.1]{LinWangXu}, and set
$\dist_X^\Phi(p,q):=\dist_X(\Phi p,\Phi q)$ and
$\mu_X^\Phi:=(\Phi^{-1})_\#\Hh_X^2$.
By the heat regularization in
\cite[Lemmas~11.7 and 11.8]{LinWangXu}, there exist smooth metrics $g_j$ on
$\Sph^2$ such that
\begin{equation}
 K_{g_j}\geq1,
 \qquad
\varepsilon_j:= \norm{\dist_{g_j}-\dist_X^\Phi}_{C^0(\Sph^2\times\Sph^2)}\to0,
 \qquad
 \norm{\d\vol_{g_j}-\mu_X^\Phi}_{\mathrm{TV}}\to0.
\end{equation}
Set $V_j=\vol_{g_j}(\Sph^2)$ and $a_j=V_j/|\Sph^2|$.  Then
$V_j\to\Hh^2(X)$, $a_j\to a_X$, and we have
\begin{equation}
   \frac{\d\vol_{g_j}}{V_j} \rightharpoonup
  \frac{\mu_X^\Phi}{\Hh^2(X)}
  \qquad\text{weakly}.
\end{equation}
By Theorem~\ref{thm:milman}, there exist bi-Lipschitz homeomorphisms
$S_j:\Sph^2\to\Sph^2$ with Lipschitz constants at most one.  Hence
\begin{equation}
 a_j\dist_{\mathrm{can}}(z,z')
 \leq \dist_{g_j}(S_jz,S_jz')
 \leq \dist_{\mathrm{can}}(z,z'),
 \quad \forall\,z,z'\in\Sph^2,
 \qquad
 (S_j)_\#\frac{\d\vol_{\mathrm{can}}}{|\Sph^2|}
 =\frac{\d\vol_{g_j}}{\vol_{g_j}(\Sph^2)}.
\end{equation}

\smallskip
\noindent\emph{Step 2: passage to the limit.}
Define $T_j=\Phi\circ S_j:\Sph^2\to X$.  By the definition of
$\varepsilon_j$,
\begin{equation}\label{eq:alexandrov-map-estimate}
 a_j\dist_{\mathrm{can}}(z,z')-\varepsilon_j
 \leq \dist_X(T_jz,T_jz')
 \leq \dist_{\mathrm{can}}(z,z')+\varepsilon_j,
 \qquad
 (T_j)_\#\frac{\d\vol_{\mathrm{can}}}{|\Sph^2|}
 =\Phi_\#\frac{\d\vol_{g_j}}{V_j}.
\end{equation}
Since $\varepsilon_j\to0$, the additive upper bound gives equicontinuity.
After passing to a subsequence, Arzel\`a--Ascoli gives uniform convergence to
a map $T:\Sph^2\to X$.  Passing to the limit in
\eqref{eq:alexandrov-map-estimate} yields
\begin{equation}
 a_X\dist_{\mathrm{can}}(z,z')
 \leq \dist_X(Tz,Tz')
 \leq \dist_{\mathrm{can}}(z,z'),
 \quad \forall\,z,z'\in\Sph^2,
 \qquad
 T_\#\frac{\d\vol_{\mathrm{can}}}{|\Sph^2|}
 =\frac{\Hh_X^2}{\Hh^2(X)}.
\end{equation}
The distance bounds and the measure identity show that $T$ is a bi-Lipschitz
homeomorphism.  The Alexandrov part of Lemma~\ref{lem:inverse-bound} now gives
\eqref{eq:alexandrov-bilip}.
\end{proof}

\begin{remark}
The constant one in Corollary~\ref{cor:alexandrov} is optimal.
The nonround spherical footballs in \cite[Example~17.3]{LinWangXu} have diameter
$\pi$, so every surjection from $\Sph^2$ onto them has Lipschitz constant at
least one.  Thus equality can occur for a nonround singular Alexandrov surface,
unlike in the smooth strictly convex case.
\end{remark}

\begin{corollary}[Metric and spectral consequences]
In the setting of Corollary~\ref{cor:alexandrov},
\begin{equation}
  \dist_{\mathrm{GH}}\bigl((X,\dist_X),(\Sph^2,g_{\mathrm{can}})\bigr)
  \leq\frac\pi2\left(1-\frac{a_X}{L}\right),\qquad
  \dist_{\mathrm{GH}}\bigl((X,\dist_X),(\Sph^2,a_Xg_{\mathrm{can}})\bigr)
  \leq\frac\pi2(L-\sqrt{a_X}).
\end{equation}
In particular, if $L=\sqrt{a_X}$, then $(X,\dist_X)$ is isometric to the round sphere
$(\Sph^2,a_Xg_{\mathrm{can}})$.

Moreover, let $0=\lambda_0(X)\leq\lambda_1(X)\leq\cdots$ denote the spectrum
of the canonical self-adjoint Laplacian of $(X,\dist_X,\Hh_X^2)$.  Then
\begin{equation}\label{eq:closed-spectrum-comparison}
 \frac{1}{L^2}\lambda_k(\Sph^2,g_{\mathrm{can}})
\leq\lambda_k(X)
\leq\left(\frac{L}{a_X}\right)^2
\lambda_k(\Sph^2,g_{\mathrm{can}}),\qquad k\geq0.
\end{equation}
\end{corollary}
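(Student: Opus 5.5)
Both Gromov--Hausdorff bounds and the spectral comparison will come from the single bi-Lipschitz homeomorphism $T$ of Corollary~\ref{cor:alexandrov}, with the two-sided bound \eqref{eq:alexandrov-bilip} used as a distortion estimate.

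\emph{Gromov--Hausdorff bounds.} Since $T$ is a bijection, its graph $\{(z,Tz)\}$ is a correspondence between $\Sph^2$ and $X$. Its distortion is $\sup_{z,z'}\bigl|\dist_X(Tz,Tz')-\lambda\,\dist_{\mathrm{can}}(z,z')\bigr|$, where $\lambda=1$ for the first comparison and $\lambda=\sqrt{a_X}$ for the rescaled sphere $(\Sph^2,a_Xg_{\mathrm{can}})$. Write $d=\dist_{\mathrm{can}}(z,z')\in[0,\pi]$.
\begin{itemize}
\item For $\lambda=1$, the bound \eqref{eq:alexandrov-bilip} with $L\le1$ gives $\dist_X(Tz,Tz')-d\in[(a_X/L-1)d,\,(L-1)d]$. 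Both endpoints are nonpositive. The lower endpoint has modulus at most $\pi(1-a_X/L)$, and the upper one has modulus $(1-L)d\le(1-a_X/L)d$, because $a_X\le L^2\le L$.
\item For $\lambda=\sqrt{a_X}$, the difference lies in $[(a_X/L-\sqrt{a_X})d,\,(L-\sqrt{a_X})d]$. The modulus of the lower endpoint satisfies $\sqrt{a_X}-a_X/L=(\sqrt{a_X}/L)(L-\sqrt{a_X})\le L-\sqrt{a_X}$, since $\sqrt{a_X}\le L$.
\end{itemize}
The inequality $a_X\le L^2$ is needed in both cases. It holds because $J_T=a_X$ almost everywhere (Lemma~\ref{lem:inverse-bound}, Alexandrov case) and $J_T\le L^2$. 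Using $\dist_{\mathrm{GH}}\le\frac12\operatorname{dis}$, each distortion is at most $\pi$ times the relevant factor, which gives both displayed bounds. If $L=\sqrt{a_X}$, then \eqref{eq:alexandrov-bilip} becomes $\dist_X(Tz,Tz')=\sqrt{a_X}\,\dist_{\mathrm{can}}(z,z')$ exactly, so $T$ itself is the claimed isometry.

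\emph{Spectral comparison.} I will use the min--max characterization of $\lambda_k(X)$ through the Cheeger energy, which agrees with the Dirichlet form of the canonical Laplacian; for Alexandrov surfaces this is \cite{KuwaeMachigashiraShioya}. For the lower bound, pull back: $u\mapsto u\circ T$ maps Lipschitz functions on $X$ to Lipschitz functions on $\Sph^2$. By the pushforward identity it is an isometry of normalized $L^2$ spaces. At almost every point, $|\nabla(u\circ T)|\le L\,|\nabla u|\circ T$. Integrating against $\sigma_2$ and pushing forward gives $\mathcal E_{\Sph^2}(u\circ T)\le L^2\,\mathcal E_X(u)$ with normalized measures. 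Every $(k+1)$-dimensional test space on $X$ therefore yields one on $\Sph^2$ with Rayleigh quotients at most $L^2$ times larger, so $\lambda_k(\Sph^2)\le L^2\lambda_k(X)$.

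For the upper bound, push forward instead: use $v\mapsto v\circ T^{-1}$. By Lemma~\ref{lem:inverse-bound}, $\Lip(T^{-1})\le L/a_X$, so $|\nabla(v\circ T^{-1})|\le (L/a_X)\,|\nabla v|\circ T^{-1}$ almost everywhere, which gives $\lambda_k(X)\le (L/a_X)^2\lambda_k(\Sph^2)$.

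\emph{Main obstacle.} The delicate step is justifying the pointwise chain-rule bounds for the minimal weak upper gradients on the Alexandrov surface $X$. I would handle this as in the Alexandrov part of Lemma~\ref{lem:inverse-bound}: differentiate in strainer charts on the regular set, which has full $\Hh^2$-measure, use that bi-Lipschitz maps preserve null sets, and invoke the identification of $|\nabla u|$ with the local Lipschitz constant for Lipschitz $u$. Density of Lipschitz functions in $W^{1,2}$ then lets the min--max run over Lipschitz test spaces.
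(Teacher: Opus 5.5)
Your proposal is correct and takes the same route as the paper. The Gromov--Hausdorff bounds come from using the graph of $T$ as a correspondence together with \eqref{eq:alexandrov-bilip}, and the spectral bounds come from your pullback/min--max argument, which is Milman's contraction principle applied to $T$ and $T^{-1}$ with the identification of the Dirichlet form from \cite{KuwaeMachigashiraShioya}. The only difference is minor: the paper reads $a_X\le L^2$ directly off \eqref{eq:alexandrov-bilip}, since any $z\neq z'$ gives $a_X/L\le L$, so the Jacobian argument is not needed.
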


\begin{proof}
Since $a_X/L\leq L\leq1$, equation \eqref{eq:alexandrov-bilip} gives
\begin{equation}
   \bigl|\dist_{\mathrm{can}}(z,z')-\dist_X(Tz,Tz')\bigr|
  \leq\left(1-\frac{a_X}{L}\right)
  \dist_{\mathrm{can}}(z,z').
\end{equation}
Since $T$ is surjective, its graph is a correspondence.  The correspondence
characterization of the Gromov--Hausdorff distance
\cite[Theorem~7.3.25]{BuragoBuragoIvanov} therefore yields
\begin{equation}
   \dist_{\mathrm{GH}}\bigl((X,\dist_X),(\Sph^2,g_{\mathrm{can}})\bigr)
  \leq\frac12\sup_{z,z'\in\Sph^2}
  \bigl|\dist_{\mathrm{can}}(z,z')-\dist_X(Tz,Tz')\bigr|
  \leq\frac\pi2\left(1-\frac{a_X}{L}\right).
\end{equation}

On the other hand, since $\dist_{a_Xg_{\mathrm{can}}}=\sqrt{a_X}\dist_{\mathrm{can}}$,
\eqref{eq:alexandrov-bilip} also gives
\begin{equation}\label{eq:closed-equal-volume-bilip}
 \frac{\sqrt{a_X}}{L}\dist_{a_Xg_{\mathrm{can}}}(z,z')
 \leq\dist_X(Tz,Tz')
 \leq\frac{L}{\sqrt{a_X}}
 \dist_{a_Xg_{\mathrm{can}}}(z,z').
\end{equation}
Using the graph of $T$ as a correspondence gives
\begin{equation}
   \dist_{\mathrm{GH}}\bigl((X,\dist_X),
  (\Sph^2,a_Xg_{\mathrm{can}})\bigr)
  \leq\frac12\left(\frac{L}{\sqrt{a_X}}-1\right)
  \operatorname{diam}(\Sph^2,a_Xg_{\mathrm{can}})
  =\frac\pi2(L-\sqrt{a_X}).
\end{equation}
If $L=\sqrt{a_X}$, then
\eqref{eq:closed-equal-volume-bilip} shows that $T$ is an isometry.

The normalized measure identity yields
$T_\#\d\vol_{\mathrm{can}}=a_X^{-1}\Hh_X^2$.
Moreover, \eqref{eq:alexandrov-bilip} gives
$\Lip(T)\leq L$ and $\Lip(T^{-1})\leq L/a_X$.
Applying Milman's metric contraction principle
\cite[Proposition~3.1 and Section~3.3]{Milman} to these two maps, together
with the identification of the relaxed Lipschitz energy with the canonical
Dirichlet form on Alexandrov spaces \cite{KuwaeMachigashiraShioya} yields
the two spectral bounds in \eqref{eq:closed-spectrum-comparison}.
\end{proof}

Since $L\leq1$, the lower estimate in
\eqref{eq:closed-spectrum-comparison} recovers the spectral bound of
\cite[Theorem~1.8]{LinWangXu} in the smooth case.
We prove rigidity in the equality case of Milman's metric contraction
principle \cite[Proposition~3.1]{Milman} for smooth two-spheres.  The result
applies to every Lipschitz map onto a smooth Riemannian two-sphere that
preserves normalized volume; neither injectivity nor a curvature assumption is
required.  Both the smoothness assumption and the restriction to dimension
two are essential.
Nonround spherical footballs provide counterexamples in the singular setting
\cite[Example~17.3]{LinWangXu}, while Berger spheres show that the analogous
equality rigidity fails already in dimension three
\cite[Proposition~3.9]{LauretBerger}.

\begin{theorem}[Spectral rigidity for measure-preserving Lipschitz maps]
\label{thm:spectral-rigidity}
Let $(M^2,g)$ be a smooth Riemannian two-sphere and let
$T:(\Sph^2,g_{\mathrm{can}})\longrightarrow(M,g)$ be a Lipschitz map
satisfying
\[
 T_\#\frac{\d\vol_{\mathrm{can}}}{|\Sph^2|}
 =\frac{\d\vol_g}{\vol_g(M)}.
\]
Set $a:=\vol_g(M)/|\Sph^2|$ and $L:=\Lip(T)$.  Then $a\leq L^2$ and
\begin{equation}\label{eq:spectral-lower-L}
 \lambda_k(M,g)\geq
 L^{-2}\lambda_k(\Sph^2,g_{\mathrm{can}}),
 \qquad k\geq0.
\end{equation}
Moreover, the following statements are equivalent:
\begin{enumerate}
\item equality in \eqref{eq:spectral-lower-L} holds for some
$k\geq1$;
\item $a=L^2$;
\item $T:(\Sph^2,L^2g_{\mathrm{can}})\longrightarrow(M,g)$ is an isometry;
\item equality in \eqref{eq:spectral-lower-L} holds for every $k\geq0$.
\end{enumerate}
\end{theorem}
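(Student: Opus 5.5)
We first establish $a\leq L^2$ and the lower bound \eqref{eq:spectral-lower-L}, then prove the cycle (i)$\Rightarrow$(ii)$\Rightarrow$(iii)$\Rightarrow$(iv)$\Rightarrow$(i).

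\emph{The inequality $a\leq L^2$ and the spectral bound.} Since $T$ is Lipschitz, the area formula applies. The pushforward identity gives, for every Borel $A\subseteq M$,
\[
a\,|T^{-1}(A)|=\vol_g(A)\leq\int_{T^{-1}(A)}J_T\,\d\vol_{\mathrm{can}}.
\]
Hence $J_T\geq a$ almost everywhere. On the other hand, $J_T\leq\sigma_1\sigma_2\leq L^2$, so $a\leq L^2$.

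For the spectral bound, pull back functions: for $u\in W^{1,2}(M)$, set $u\circ T\in W^{1,2}(\Sph^2)$. Then $|\nabla(u\circ T)|\leq L\,|\nabla u|\circ T$ almost everywhere. The pushforward identity turns $L^2(\mu_g)$ norms into $L^2(\sigma_2)$ norms. Inserting this into min--max on $k$-dimensional subspaces gives
\[
\lambda_k(\Sph^2)\leq L^2\lambda_k(M),
\]
which is Milman's principle \cite[Proposition~3.1]{Milman}. Pullback is injective on $L^2$ because it is an isometry of normalized $L^2$ spaces, so dimensions are preserved.

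\emph{(i)$\Rightarrow$(ii).} This is the main step. Suppose equality holds for some $k\geq1$. Take an extremal subspace and pass to the first nonconstant eigenspace case. I would argue that equality forces the pulled-back eigenfunctions $v=u\circ T$ to be eigenfunctions of $\Sph^2$, and forces $|\nabla v|=L|\nabla u|\circ T$ almost everywhere. The latter says $\d T$ has operator norm exactly $L$ in the direction of $\nabla u$. For $k\geq1$, the equality span contains the pullbacks of eigenfunctions $u$ on $M$, and after unique continuation their gradient directions span $T_xM$ at generic points. This is where dimension two matters: in dimension two, a second independent eigenfunction direction (or a rotation argument on $\Sph^2$ eigenspaces, whose gradients span the tangent plane everywhere) forces both singular values of $\d T$ to equal $L$ almost everywhere. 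Then $J_T=L^2$. Combining with the area-formula computation above gives $\vol_g(M)\geq L^2|\Sph^2|$, so $a\geq L^2$, and hence $a=L^2$. The delicate point is that only one eigenvalue is assumed equal. I would handle it by using that the first spherical eigenspace is rotation invariant of dimension $3$, with gradients spanning every tangent plane; for general $k$, I would use that equality on the subspace forces an entire spherical eigenspace to be attained by pullbacks. This step is the obstacle I expect.

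\emph{(ii)$\Rightarrow$(iii).} If $a=L^2$, then $a\leq J_T\leq\sigma_1\sigma_2\leq L^2$ forces $\sigma_1=\sigma_2=L$ almost everywhere. Moreover, the area formula inequality must be an equality, so the multiplicity $\#T^{-1}(y)=1$ for almost every $y$. Thus $T$ is an $L$-homothety almost everywhere, i.e.\ $T^*g=L^2g_{\mathrm{can}}$ almost everywhere as a Lipschitz map. A Lipschitz map with $\d T$ almost everywhere a conformal isometry of the metric $L^2g_{\mathrm{can}}$ is $1$-Lipschitz from $(\Sph^2,L^2g_{\mathrm{can}})$ and locally length preserving. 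By the Reshetnyak/Liouville regularity theory for $1$-quasiregular maps, it is a smooth local isometry. Since the area is equal and the preimages have multiplicity one, it is a global isometry.

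\emph{(iii)$\Rightarrow$(iv)$\Rightarrow$(i).} These are immediate: an isometry from $(\Sph^2,L^2g_{\mathrm{can}})$ scales eigenvalues by $L^{-2}$, and (iv) trivially implies (i).
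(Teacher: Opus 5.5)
Your proof of $a\le L^2$ and of the spectral lower bound is fine and matches the paper. One caveat: ``hence $J_T\ge a$ a.e.'' does not follow from testing only on preimage sets $T^{-1}(A)$. It is true, but it needs the density identity $\sum_{x\in T^{-1}(y)}J_T(x)^{-1}=a^{-1}$; for $a\le L^2$ it suffices to take $A=M$.

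The genuine gap is (i)$\Rightarrow$(ii), which fails at two points.

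\emph{Only one extremizer is available.} Equality at a single $k$ only yields extremizers in $W=\{f\in E: f\circ T\perp F\}$. Here $E$ is the span of the eigenfunctions of $(M,L^{-2}g)$ with eigenvalue at most $\Lambda$, and $F$ is the span of the spherical eigenfunctions with eigenvalue below $\Lambda$. The bound $\dim W\ge \dim E-\dim F$ only guarantees $\dim W\geq 1$, for instance when $k=1$ and $\lambda_1(M)$ is simple. A single extremal pair (an eigenfunction $f$ on $M$ whose pullback $u=f\circ T$ is a spherical eigenfunction with $|\nabla u|=L|\nabla f|\circ T$) only says that one singular value of $\d T$ equals $L$. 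The other singular value is unconstrained. Your rotation idea produces nothing new: replacing $T$ by $T\circ R$ with $R\in\mathrm{SO}(3)$ leaves $W$ unchanged, because $F^\perp$ is rotation invariant. Nothing supports ``equality forces a whole spherical eigenspace to be attained by pullbacks''.

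\emph{Even $J_T=L^2$ would not give $a=L^2$.} Granting $J_T=L^2$ a.e., the area formula gives $L^2|\Sph^2|=\int_{\Sph^2}J_T=\int_M\#T^{-1}(y)\,\d\vol_g(y)\ge\vol_g(M)$. That is $a\le L^2$ again, so your inequality has the wrong direction. With the pushforward identity it only yields $\#T^{-1}(y)=L^2/a$ a.e. Getting $a=L^2$ requires multiplicity one, which is the global, topological content of the theorem and cannot come from pointwise singular values.

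The missing idea is to use a single extremal pair together with Gauss--Bonnet. Put $\tilde g=L^{-2}g$ and $b=a/L^2$. The chain rule and $\norm{\d T}\le1$ give $\d T_x n_u=n_f(Tx)$ for the unit level-set normals. Differentiating $|\nabla u|=|\nabla f|_{\tilde g}\circ T$ along $n_u$ matches the normal Hessians. With the two eigenvalue equations, the level-set geodesic curvatures then agree: $\kappa_u=\kappa_f\circ T$. Coarea and the pushforward identity give $\int_{\{f=t\}}\kappa_f\,\d s_{\tilde g}=b\int_{\{u=t\}}\kappa_u\,\d s$ for a.e.\ $t$. Gauss--Bonnet on sublevel sets then gives
\[
\mathcal G(t)=\int_{\{f<t\}}K_{\tilde g}\,\d\vol_{\tilde g}-b\,\bigl|\{u<t\}\bigr|=2\pi\bigl(\chi(\{f<t\})-b\,\chi(\{u<t\})\bigr).
\]
$\mathcal G$ is absolutely continuous and locally constant off the null set of critical values, hence constant. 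It vanishes for $t\ll0$ and equals $4\pi(1-b)$ for $t\gg0$, so $b=1$. Dimension two enters through Gauss--Bonnet, not through spanning tangent planes.

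Your (ii)$\Rightarrow$(iii) also needs repair. Having $\d T\in L\cdot\mathrm O(2)$ a.e.\ does not make $T$ $1$-quasiregular, since folds satisfy it, so Reshetnyak rigidity needs $\det\d T>0$. First prove injectivity: if $T(x_1)=T(x_2)=y$, two disjoint $r$-balls lie in $T^{-1}(B_{Lr}(y))$, whose measure is $\vol_g(B_{Lr}(y))/a=\pi r^2+o(r^2)$ when $a=L^2$. Then the local degree of the resulting homeomorphism gives $\d T\in L\cdot\mathrm{SO}(2)$ a.e.
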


\begin{proof}
\noindent\emph{Step 1: the spectral bound.}
The pushforward identity and full support of the Riemannian volume measures
imply that $T$ is
surjective, so $L>0$.  Set $\tilde g=L^{-2}g$ and denote
$\mu_0=\d\vol_{\mathrm{can}}/|\Sph^2|$ and
$\tilde\mu=\d\vol_{\tilde g}/\vol_{\tilde g}(M)$.
Then $T:(\Sph^2,g_{\mathrm{can}})\to(M,\tilde g)$ is $1$-Lipschitz and
$T_\#\mu_0=\tilde\mu$.  The area formula gives
$b:=\vol_{\tilde g}(M)/|\Sph^2|=a/L^2\leq1$.
Indeed, surjectivity and the area formula \cite{Kirchheim} give
$\vol_{\tilde g}(M)\leq\int_{\Sph^2}J_T\,\d\vol_{\mathrm{can}}$, while
$J_T\leq1$ almost everywhere.
Milman's contraction principle \cite[Proposition~3.1]{Milman} then gives
\begin{equation}
  \lambda_j(\Sph^2,g_{\mathrm{can}})
  \leq\lambda_j(M,\tilde g)
  =L^2\lambda_j(M,g).
\end{equation}

\smallskip
\noindent\emph{Step 2: an extremizing eigenfunction.}
Assume equality in \eqref{eq:spectral-lower-L} for some $k\geq1$.
Since $T_\#\mu_0=\tilde\mu$, the pullback
operator
\[
U:L^2(M,\tilde\mu)\longrightarrow L^2(\Sph^2,\mu_0),
\qquad Uf=f\circ T,
\]
is a linear isometric embedding.
Set
$\Lambda=\lambda_k(\Sph^2,g_{\mathrm{can}})=\lambda_k(M,\tilde g)>0$.
Let
\[
E:=
\bigoplus_{\substack{\lambda\in\operatorname{Spec}(-\Delta_{\tilde g}); \,
    \lambda\leq\Lambda}}
\ker(-\Delta_{\tilde g}-\lambda\operatorname{Id}),
\qquad
F:=
\bigoplus_{\substack{\lambda\in\operatorname{Spec}(-\Delta_{g_{\mathrm{can}}});\,
    \lambda<\Lambda}}
  \ker(-\Delta_{g_{\mathrm{can}}}-\lambda\operatorname{Id}).
\]
Then $\dim E\geq k+1$ and $\dim F\leq k$, and hence there exists
$0\neq f\in E$ such that $u=Uf\in F^\perp$.  Let $\mathcal E_0$ and
$\mathcal E_{\tilde g}$ denote the Dirichlet energies associated with
$(g_{\mathrm{can}},\mu_0)$ and $(\tilde g,\tilde\mu)$.
Since $T$ is $1$-Lipschitz, the Sobolev chain rule yields
$\mathcal E_0(Uw)\leq\mathcal E_{\tilde g}(w)$ for any
$w\in W^{1,2}(M)$.  Hence, by the definitions of
$E$ and $F$ and the $L^2$ isometry of $U$, we obtain
\begin{equation}
     \Lambda\norm{u}_{L^2(\mu_0)}^2
    \leq\mathcal E_0(u)
  \leq\mathcal E_{\tilde g}(f)
  \leq\Lambda\norm{f}_{L^2(\tilde\mu)}^2
  =\Lambda\norm{u}_{L^2(\mu_0)}^2.
\end{equation}
Hence all inequalities are equalities.  Consequently,
\begin{equation}
   -\Delta_{g_{\mathrm{can}}}u=\Lambda u,\qquad
  -\Delta_{\tilde g}f=\Lambda f,\qquad
  u=f\circ T,
  \qquad
  |\nabla u|_{g_{\mathrm{can}}}\overset{\text{a.e.}}{=}|\nabla f|_{\tilde g}\circ T.
\end{equation}
The first two conclusions follow from the spectral decompositions of $u$ and
$f$.  The gradient identity follows because the Sobolev chain rule gives a
pointwise inequality whose integrals are equal, using $T_\#\mu_0=\tilde\mu$.
Elliptic regularity gives $u,f\in C^\infty$.  Since both sides of the last
identity are continuous, that identity holds pointwise.

\smallskip
\noindent\emph{Step 3: regular level sets.}
Set
$\operatorname{Crit}(u):=\{x\in\Sph^2:\nabla u(x)=0\}$ and
$\operatorname{Crit}(f):=\{y\in M:\nabla f(y)=0\}$.
At almost every $x\in\Sph^2\setminus\operatorname{Crit}(u)$ at which $T$
is differentiable, set
\[
n_u(x):=\frac{\nabla u(x)}{|\nabla u(x)|},
\qquad
n_f(T(x)):=\frac{\nabla f(T(x))}{|\nabla f(T(x))|},
\qquad
A_x:=\d T_x.
\]
By the chain rule applied to $u=f\circ T$, we have
$A_x^*n_f(T(x))=n_u(x)$, and $\norm{A_x}\leq1$ then yields
$A_x n_u(x)=n_f(T(x))$: indeed,
$\langle A_x n_u,n_f\rangle=1$ and $|A_x n_u|\leq1$.
Differentiating
$|\nabla u|_{g_{\mathrm{can}}}=|\nabla f|_{\tilde g}\circ T$ in the $n_u$
direction gives
\begin{equation}
  \begin{aligned}
    (\nabla^2u)_x\bigl(n_u(x),n_u(x)\bigr)
    &=\d\bigl(|\nabla u|_{g_{\mathrm{can}}}\bigr)_x\bigl(n_u(x)\bigr)\\
    &=\d\bigl(|\nabla f|_{\tilde g}\bigr)_{T(x)}
    \bigl(A_x n_u(x)\bigr)=(\nabla^2f)_{T(x)}
    \bigl(n_f(T(x)),n_f(T(x))\bigr).
  \end{aligned}
\end{equation}

With the Gauss--Bonnet sign convention, the signed geodesic curvature of a
regular level set of $v$ is denoted by
$\kappa_v=(\Delta v-\nabla^2v(n_v,n_v))/|\nabla v|$.  Then
$u=f\circ T$, the two eigenvalue equations, and the preceding Hessian
identity give $\kappa_u=\kappa_f\circ T$ almost everywhere on
$\Sph^2\setminus\operatorname{Crit}(u)$.
For any $\eta\in C_c(\R)$, the coarea formula and
$\d\vol_{\tilde g}=b\,T_\#\d\vol_{\mathrm{can}}$ yield
\begin{equation}
   \int_{\R} \eta(t)\int_{\{f=t\}} \kappa_f\,\d s_{\tilde g}\,\d t
  =\int_M \eta(f)\kappa_f|\nabla f|_{\tilde g}\,\d\vol_{\tilde g}
  =b\int_{\R} \eta(t)\int_{\{u=t\}} \kappa_u\,\d s_{g_{\mathrm{can}}}\,\d t.
\end{equation}
Thus, for almost every common regular value $t$ of $u$ and $f$,
\begin{equation}\label{eq:level-curvature-pushforward}
 \int_{\{f=t\}} \kappa_f\,\d s_{\tilde g}
 =b\int_{\{u=t\}} \kappa_u\,\d s_{g_{\mathrm{can}}}.
\end{equation}

\smallskip
\noindent\emph{Step 4: Gauss--Bonnet and the volume ratio.}
Let $\Omega_t=\{u<t\}$ and $D_t=\{f<t\}$.  Combining the Gauss--Bonnet
formulas for $\Omega_t$ and $D_t$ with $K_{g_{\mathrm{can}}}=1$ and
\eqref{eq:level-curvature-pushforward}, we obtain
\begin{equation}\label{eq:sublevel-gauss-bonnet}
 \mathcal G(t):=
 \int_{D_t} K_{\tilde g}\,\d\vol_{\tilde g}
 -b\,\vol_{g_{\mathrm{can}}}(\Omega_t)
 =2\pi\bigl(\chi(D_t)-b\chi(\Omega_t)\bigr).
\end{equation}
The critical set of a nonconstant eigenfunction on a smooth closed surface
has zero area \cite[Remark~4.4]{BakriCritical}.  This applies to both $u$ and
$f$; alternatively,
$T^{-1}(\operatorname{Crit}(f))=\operatorname{Crit}(u)$ and the pushforward
identity transfer the null-set statement from $u$ to $f$.
The coarea formula therefore yields
\begin{equation}
     \mathcal G(t)=\int_{-\infty}^t
  \left(
  \int_{\{f=s\}} \frac{K_{\tilde g}}{|\nabla f|}\,\d s_{\tilde g}
  -b\int_{\{u=s\}} \frac{1}{|\nabla u|}\,\d s_{g_{\mathrm{can}}}
  \right)\d s,
\end{equation}
which implies that $\mathcal G$ is absolutely continuous.  By Sard's theorem,
the union of the critical values of $u$ and $f$ has measure zero.  On each component of the complement of this union, the normalized gradient
flows $\nabla u/|\nabla u|^2$ and $\nabla f/|\nabla f|^2$ identify, respectively, the sublevel sets of $u$ and $f$ at different levels.  Hence the two Euler characteristics in
\eqref{eq:sublevel-gauss-bonnet} are constant there.  Since that identity holds
for almost every such value and $\mathcal G$ is continuous, $\mathcal G$ is
constant on each component.  Hence $\mathcal G'=0$ almost everywhere and
$\mathcal G$ is constant on $\R$.  For all sufficiently small $t$, both
sublevel sets are empty, and hence
$\mathcal G(t)=0$.  For all sufficiently large $t$, we have
$D_t=M$ and $\Omega_t=\Sph^2$, and Gauss--Bonnet gives
\begin{equation}
   \mathcal G(t)=
  \int_M K_{\tilde g}\,\d\vol_{\tilde g}
  -b|\Sph^2|=4\pi(1-b).
\end{equation}
Hence $b=1$ and then $a=L^2$.  This proves (i)$\Rightarrow$(ii).

\smallskip
\noindent\emph{Step 5: rigidity at equal volume.}
Assume now only (ii), so that $b=1$.  Suppose that
$T(x_1)=T(x_2)=y$ with $x_1\neq x_2$.  For all sufficiently small $r$, the
balls $B_r(x_1)$ and $B_r(x_2)$ are disjoint and contained in
$T^{-1}(B_r(y))$.  The pushforward identity then implies
\begin{equation}
  2\pi r^2+o(r^2)
  \leq\vol_{g_{\mathrm{can}}}\bigl(T^{-1}(B_r(y))\bigr)
  =\vol_{\tilde g}(B_r(y))
  =\pi r^2+o(r^2),
\end{equation}
which is a contradiction.  Thus $T$ is injective and hence a homeomorphism.
The area formula gives $J_T=1$ almost everywhere.  Since
$\norm{\d T}\leq1$, both singular values of $\d T$ are therefore one, so
$\d T$ is a linear isometry almost everywhere.  Give $M$ the orientation for
which the homeomorphism $T$ has degree one.  At every differentiability point
where $\d T$ is invertible, rescaling in oriented coordinate charts converges
uniformly on the boundary of a small disk to the linear map $\d T$.  Homotopy
invariance of the local degree then gives $\det(\d T)>0$.  Thus
$\d T\in\mathrm{SO}(2)$ almost everywhere.  By Reshetnyak rigidity
\cite[Theorem~1.1]{KupfermanMaorShachar}, $T$ is a smooth local isometry.
Since $T$ is a homeomorphism, it is a global isometry
$T:(\Sph^2,g_{\mathrm{can}})\longrightarrow(M,\tilde g)$, equivalently
$T:(\Sph^2,L^2g_{\mathrm{can}})\longrightarrow(M,g)$ is an isometry.

This proves (ii)$\Rightarrow$(iii).  If (iii) holds, the scaling law for the
Laplacian yields $\lambda_k(M,g)
=L^{-2}\lambda_k(\Sph^2,g_{\mathrm{can}})$ for every $k\geq0$.
Thus (iii)$\Rightarrow$(iv), while (iv)$\Rightarrow$(i) is immediate.
\end{proof}

\begin{remark}\label{rem:spectral-upper-rigidity}
  If $T$ is bi-Lipschitz, then the upper bound holds 
  \[
  \lambda_k(M,g)\leq
  \left(\frac{L}{a}\right)^2
  \lambda_k(\Sph^2,g_{\mathrm{can}}),
  \qquad k\geq0.
  \]
  This upper bound has the same rigidity: equality for some $k\geq1$ holds if
  and only if $a=L^2$, or equivalently,
  $T:(\Sph^2,L^2g_{\mathrm{can}})\to(M,g)$ is an isometry.  The level-set and
  Gauss--Bonnet argument in the proof of
  Theorem~\ref{thm:spectral-rigidity} applies equally to an arbitrary smooth
  Riemannian two-sphere $(N,h)$ as the source.  Indeed, in
  \eqref{eq:sublevel-gauss-bonnet} one replaces
  $b\,\vol_{g_{\mathrm{can}}}(\Omega_t)$ by
  $b\int_{\Omega_t}K_h\,\d\vol_h$.  Both total curvature integrals are
  $4\pi$, so the same endpoint argument gives $b=1$; the critical-set
  nullity needed for coarea is supplied by \cite[Remark~4.4]{BakriCritical}.
  Applying this observation to $T^{-1}$ proves the asserted upper rigidity.
  For
  comparison, volume-normalized upper bounds of a different nature are proved in
  \cite{KarpukhinNadirashviliPenskoiPolterovich}.
\end{remark}

Combining Theorem~\ref{thm:milman} and
Theorem~\ref{thm:spectral-rigidity} gives an alternative proof of the smooth
spectral rigidity theorem of Lin, Wang, and Xu
\cite[Theorem~1.8]{LinWangXu}.
\begin{corollary}[Spectral rigidity of Lin--Wang--Xu]
Under the assumptions of Theorem~\ref{thm:milman}, the equality
\[
 \lambda_k(M,g)=\lambda_k(\Sph^2,g_{\mathrm{can}})
\]
for some $k\geq1$ holds if and only if $(M,g)$ is isometric to the unit round
sphere.
\end{corollary}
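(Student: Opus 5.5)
The ``if'' direction is immediate: if $(M,g)$ is isometric to the unit round sphere, then its spectrum equals that of $(\Sph^2,g_{\mathrm{can}})$ for every $k$. For the ``only if'' direction we combine Theorem~\ref{thm:milman} with Theorem~\ref{thm:spectral-rigidity}. First apply Theorem~\ref{thm:milman}. Since $K_g\geq1$, it gives a bi-Lipschitz homeomorphism $T:(\Sph^2,g_{\mathrm{can}})\to(M,g)$ that preserves normalized volume and has $L=\Lip(T)\leq1$. Set $a=\vol_g(M)/|\Sph^2|$. Theorem~\ref{thm:spectral-rigidity} applies to this $T$, since it is Lipschitz, preserves normalized volume, and $M$ is a smooth two-sphere. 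It yields
\[
 \lambda_k(M,g)\geq L^{-2}\lambda_k(\Sph^2,g_{\mathrm{can}})\geq\lambda_k(\Sph^2,g_{\mathrm{can}}).
\]

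Now suppose $\lambda_k(M,g)=\lambda_k(\Sph^2,g_{\mathrm{can}})$ for some $k\geq1$. Then both inequalities above are equalities. Since $k\geq1$, we have $\lambda_k(\Sph^2,g_{\mathrm{can}})\geq2>0$. Equality in the second inequality therefore forces $L^{-2}=1$, so $L=1$. Equality in the first inequality is condition (i) of Theorem~\ref{thm:spectral-rigidity}, which implies (iii): $T:(\Sph^2,L^2g_{\mathrm{can}})\to(M,g)$ is an isometry. With $L=1$, this says $T$ is an isometry from the unit round sphere onto $(M,g)$.

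There is no real obstacle here, because all the work lies in the two cited theorems. The only point needing care is the positivity of $\lambda_k(\Sph^2,g_{\mathrm{can}})$ for $k\geq1$. This is what turns the chain of inequalities into $L=1$, and so upgrades the homothety given by (iii) to a genuine isometry with the unit sphere.
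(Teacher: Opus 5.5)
Your proof is correct and is the argument the paper intends: the paper gives no separate proof and states the corollary as an immediate consequence of combining Theorem~\ref{thm:milman}, which supplies $T$ with $L\leq1$, with the implication (i)$\Rightarrow$(iii) of Theorem~\ref{thm:spectral-rigidity}. Your use of $\lambda_k(\Sph^2,g_{\mathrm{can}})>0$ for $k\geq1$ to force $L=1$, which upgrades the homothety in (iii) to an isometry with the unit sphere, is exactly the step the paper leaves implicit.
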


\section{Free-boundary contractions}\label{sec:free-boundary}
We prove the free-boundary part of
Theorem~\ref{thm:extrinsic-unified}.  A compact embedded hypersurface
$\Sigma^n\subseteq\overline{\B^{n+1}}$ has free boundary if
\begin{equation}\label{eq:free-boundary-condition}
 \Sigma\cap\Sph^n=\partial\Sigma,
 \qquad
 \langle\nu,x\rangle=0\quad\text{on }\partial\Sigma.
\end{equation}
Thus $\Sigma$ meets the boundary sphere
$\Sph^n=\partial\overline{\B^{n+1}}$ orthogonally along $\partial\Sigma$.

\begin{proof}[Proof of Theorem~\ref{thm:extrinsic-unified}~(ii)]
\noindent\emph{Step 1: the free-boundary flow.}
Choose a smooth parametrization
$X_0:\overline{\B^n}\to\Sigma$ and the normal for which its second
fundamental form $h_0$ is positive definite.
Let $\gamma:(-\varepsilon,0]\to\Sigma$ be a $C^1$ curve with
$\gamma(0)\in\partial\Sigma$.  Since
$\Sigma\subseteq\overline{\B^{n+1}}$, we have
\begin{equation}
   2\langle\dot\gamma(0),\gamma(0)\rangle
  =\lim_{r\uparrow0}
  \frac{|\gamma(0)|^2-|\gamma(r)|^2}{-r}\geq0.
\end{equation}
This verifies the one-sided
hypothesis of Lambert and Scheuer; the remaining boundary conditions follow
from \eqref{eq:free-boundary-condition}.  By
\cite[Theorem~1.1]{LambertScheuer}, the inverse mean curvature flow
\begin{equation}
   \partial_t X=H_t^{-1}\nu_t,\qquad X(0,\cdot)=X_0,
\end{equation}
exists on a finite maximal interval $[0,\tau)$ and converges to a flat unit
disk.  Moreover, the strict convexity is preserved by
\cite[Proposition~6.6]{LambertScheuer}.

Use $X_0$ to identify $\overline{\B^n}$ with
$\Sigma_0:=\Sigma$, and still write $X_t:\Sigma_0\to\Sigma_t$ for the resulting
flow embeddings.  Set
$g_t=X_t^*g_{\Sigma_t}$ and
$\d\vol_t=X_t^*\d\vol_{\Sigma_t}$.
For $t\in(0,\tau)$, Proposition~\ref{prop:imcf-selection} gives
\begin{equation}
   \partial_t g_t=2H_t^{-1}h_t>0,
   \qquad
   \partial_t\d\vol_t=\d\vol_t,
\end{equation}
while $X_t\to X_0$ in $C^1$ as $t\downarrow0$.  Then
\begin{equation}
  g_t\geq g_s, \quad 0\leq s\leq t<\tau,
  \qquad
  \d\vol_t=e^t\d\vol_0.
\end{equation}

\smallskip
\noindent\emph{Step 2: the terminal flat disk.}
Let $Q_t(x)=f(x,u_t(x))$, $x\in\overline{\B^n}$, be the M\"obius graph
parametrizations used in the proof of
\cite[Theorem~1.1]{LambertScheuer}; see
\cite[Section~5]{LambertScheuer}.
After an ambient rotation, identify the limiting flat disk with
$\overline{\B^n}$.  By \cite[Remark~7.4]{LambertScheuer},
\begin{equation}
  u_t\to1
  \quad\text{in }C^{1,\beta}(\overline{\B^n})
  \quad\text{as }t\uparrow\tau,
  \quad\text{for every }0<\beta<1.
\end{equation}
Since $f(x,1)=x$ by the definition of $f$, it follows that
\begin{equation}
  Q_t\to\operatorname{id}_{\overline{\B^n}}
  \quad\text{in }C^{1,\beta}(\overline{\B^n})
  \quad\text{as }t\uparrow\tau,
  \quad\text{for every }0<\beta<1.
\end{equation}
Consequently, for $q_t:=Q_t^*g_{\Sigma_t}$,
$q_t\to g_{\mathrm{Euc}}$ uniformly as quadratic forms.  In
particular, $|\Sigma_t|\to\omega_n$, and hence
$b:=|\Sigma_0|/\omega_n=e^{-\tau}$.

\smallskip
\noindent\emph{Step 3: the endpoint map.}
For $t\in(0,\tau)$, define
\[
T_t:=X_t^{-1}\circ Q_t:\overline{\B^n}\longrightarrow\Sigma_0.
\]
Then $T_t^*g_t=q_t$.  Fix $s\in(0,\tau)$.  Since $\Sigma_0$ is compact and
$g_s-g_0=\int_0^s 2H_r^{-1}h_r\,\d r>0$,
there exists $\delta>0$ such that
$g_s\geq(1+\delta)g_0$.  Applying
Proposition~\ref{prop:endpoint-contraction} with $M=\Sigma_0$,
$Z=\overline{\B^n}$, and $R_t=T_t$, we obtain a
bi-Lipschitz homeomorphism
\begin{equation*}
    T:(\overline{\B^n},g_{\mathrm{Euc}})\longrightarrow(\Sigma_0,g_0)
\end{equation*}
with $L:=\Lip(T)\leq(1+\delta)^{-1/2}<1$, satisfying
\begin{equation}
  \frac{b}{L^{n-1}}|z-z'|
  \leq \dist_{g_0}(Tz,Tz')
  \leq L|z-z'|,
  \qquad \forall\,z,z'\in\overline{\B^n}, \qquad T_\#\frac{\d x}{\omega_n}
  =\frac{\d\vol_0}{|\Sigma_0|},
\end{equation}
which proves Theorem~\ref{thm:extrinsic-unified}~(ii).
\end{proof}

\begin{corollary}[Volume and spectral consequences]
\label{cor:free-consequences}
In the free-boundary setting of Theorem~\ref{thm:extrinsic-unified},
$|\Sigma|\leq L^n\omega_n<\omega_n$.  Moreover, for both Dirichlet and
Neumann boundary conditions,
\begin{equation}
 \lambda_k^{D/N}(\Sigma,g_\Sigma)
 \geq L^{-2}\lambda_k^{D/N}(\B^n,g_{\mathrm{Euc}}).
\end{equation}
Here $k=0,1,\ldots$ in the Neumann case and $k=1,2,\ldots$ in the Dirichlet case.
\end{corollary}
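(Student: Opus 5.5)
The plan is to read off both claims from the map $T:(\overline{\B^n},g_{\mathrm{Euc}})\to(\Sigma,g_\Sigma)$ of Theorem~\ref{thm:extrinsic-unified}~(ii), which is bi-Lipschitz, satisfies $\Lip(T)=L<1$, and pushes $\d x/\omega_n$ to $\d\vol_\Sigma/|\Sigma|$.

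For the volume bound, I would use the first part of Lemma~\ref{lem:inverse-bound}, which gives $J_T=b=|\Sigma|/\omega_n$ almost everywhere. At a differentiability point the singular values $\sigma_i$ of $\d T$ all satisfy $\sigma_i\leq L$. Hence $b=\prod_i\sigma_i\leq L^n$, so $|\Sigma|\leq L^n\omega_n<\omega_n$. Alternatively, the area formula together with surjectivity gives $|\Sigma|\leq\int J_T\,\d x\leq L^n\omega_n$ directly.

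For the spectral bound, I would follow Step~1 of Theorem~\ref{thm:spectral-rigidity}. Rescale to $\tilde g=L^{-2}g_\Sigma$, so that $T:(\overline{\B^n},g_{\mathrm{Euc}})\to(\Sigma,\tilde g)$ is $1$-Lipschitz and still preserves normalized volume. Pullback $Uf=f\circ T$ is then an $L^2$-isometry from $L^2(\Sigma,\tilde\mu)$ into $L^2(\B^n,\d x/\omega_n)$. By the Sobolev chain rule it satisfies $\mathcal E_0(Uf)\leq\mathcal E_{\tilde g}(f)$, because $|\nabla(f\circ T)|\leq|\nabla f|_{\tilde g}\circ T$ almost everywhere and $T$ pushes the measure forward correctly. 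Both Rayleigh quotients carry the same normalized measure, so the quotient does not increase under $U$.

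In the Neumann case, min--max over subspaces of $W^{1,2}$ of dimension $k+1$ gives $\lambda_k^N(\B^n)\leq\lambda_k^N(\Sigma,\tilde g)=L^2\lambda_k^N(\Sigma,g_\Sigma)$. Since $U$ is injective, it maps a $(k+1)$-dimensional subspace to a $(k+1)$-dimensional one; this is Milman's contraction principle \cite[Proposition~3.1]{Milman}.

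The Dirichlet case is the main obstacle: one must check that $U$ maps $W^{1,2}_0(\Sigma)$ into $W^{1,2}_0(\B^n)$. For this, I would show that the homeomorphism $T$ sends $\partial\B^n$ onto $\partial\Sigma$ by invariance of domain, since interior points map to interior points. A Lipschitz $f$ with compact support in $\operatorname{int}\Sigma$ then pulls back to a Lipschitz function with compact support in $\B^n$. Density of such $f$ in $W^{1,2}_0$, combined with boundedness of $U$ in $W^{1,2}$, finishes the argument. The same min--max then yields $\lambda_k^D(\Sigma,g_\Sigma)\geq L^{-2}\lambda_k^D(\B^n)$ for $k\geq1$.
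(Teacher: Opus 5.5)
Your proposal is correct and follows the paper's argument: the volume bound comes from the area formula (equivalently $J_T=b\leq L^n$), and the spectral bound comes from Milman's contraction principle with boundary, using that the homeomorphism $T$ maps $\partial\B^n$ onto $\partial\Sigma$. The only difference is that you unpack \cite[Theorem~3.6]{Milman} — pullback $L^2$-isometry, Rayleigh-quotient comparison, and preservation of $W^{1,2}_0$ — where the paper simply cites it.
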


\begin{proof}
  The area formula gives $|\Sigma|\leq L^n\omega_n$.  Since $T$ is a
  homeomorphism of
  manifolds with boundary, $T(\partial\B^n)=\partial\Sigma$.  Applying
  Milman's contraction principle with boundary
  \cite[Theorem~3.6]{Milman} to both the Neumann and
  Dirichlet spectra gives the stated inequality.
\end{proof}

\begin{remark}
  Lambert and Scheuer proved that $|\Sigma|\leq\omega_n$ for weakly convex
  free-boundary hypersurfaces, with equality only for the flat disk
  \cite{LambertScheuerInequality}.  In the strictly convex setting, the
  volume estimate here is strengthened by the factor $L<1$, where $L$ is
  the Lipschitz constant of the contraction.  The strict inequality is not
  uniform.  For example, for $c>1$, the portion in
  $\overline{\B^{n+1}}$ of the sphere centered at $ce_{n+1}$ with radius
  $\sqrt{c^2-1}$ is a strictly convex free-boundary cap; the orthogonality
  follows from $c x_{n+1}=1$ along its boundary.  As $c\to\infty$, these caps
  converge in $C^1$ to the flat unit disk, in which case
  $|\Sigma|/\omega_n\to1$ and hence necessarily $L\to1$.
\end{remark}

The intrinsic disk result follows from the free-boundary contraction,
Koerber's isometric embedding theorem \cite{Koerber}, and a conformal
approximation.

\begin{proof}[Proof of Theorem~\ref{thm:disk}]
	\noindent\emph{Step 1: the positive-curvature case.}
Assume first that $K_g>0$.  Fix $\alpha\in(0,1)$.  Since $g$ is smooth,
\cite[Theorem~1.1]{Koerber} gives, for every integer $k\geq4$, a
$C^{k+1,\alpha}$ isometric embedding
\[
F_k:(\overline{\B^2},g)
\longrightarrow(\overline{\B^3},g_{\mathrm{Euc}})
\]
with free boundary on $\Sph^2$.  Fix $F_4$.  By the uniqueness statement in
\cite[Theorem~1.1]{Koerber}, for each $k\geq4$ there exists $Q_k\in O(3)$ such
that
$F_4=Q_k\circ F_k$.  Thus $F_4$ is of class $C^{k+1,\alpha}$ for every
$k\geq4$, and hence $F:=F_4$ is smooth.

Set $\Sigma=F(\overline{\B^2})$.  Since $\overline{\B^2}$ is orientable,
choose a global unit normal along $F$ and denote the corresponding second
fundamental form on $\overline{\B^2}$ by $h$.  The Gauss equation yields
$\det(g^{-1}h)=K_g>0$.
Thus $h$ is definite everywhere; connectedness and a reversal of the normal
allow us to take $h>0$.  Hence $\Sigma$ is a smooth, connected, embedded,
strictly convex free-boundary surface of disk type.  Apply
Theorem~\ref{thm:extrinsic-unified}~(ii) to $\Sigma$ and compose the map with
the isometry $F^{-1}:\Sigma\to(\overline{\B^2},g)$ to obtain the required map
with $L<1$.

\smallskip
	\noindent\emph{Step 2: the nonnegative-curvature case.}
Now suppose only that $K_g\geq0$.  Pulling back by a smooth diffeomorphism of
$\overline{\B^2}$, write $g=E_0^2g_{\mathrm{Euc}}$ in global isothermal
coordinates; see \cite[Section~2]{Koerber}.  Choose a smooth metric
$g_1=E_1^2g_{\mathrm{Euc}}$ with $K_{g_1}>0$ and
$k_{\partial\overline{\B^2},g_1}=1$, for instance the metric of a strictly
convex free-boundary spherical cap.  For $0<\varepsilon<1$, set
\[
 E_\varepsilon
 =\frac{E_0E_1}{(1-\varepsilon)E_1+\varepsilon E_0},
 \qquad
 g_\varepsilon=E_\varepsilon^2g_{\mathrm{Euc}}.
\]
Then $g_\varepsilon\to g$ smoothly.  For $g_E=E^2g_{\mathrm{Euc}}$, our sign
convention gives
$k_{\partial\B^2,g_E}
=E^{-1}\bigl(1+\partial_{\eta_{\mathrm{Euc}}}\log E\bigr)$.
Equivalently, $k_{\partial\B^2,g_E}=1$ precisely when
$\partial_{\eta_{\mathrm{Euc}}}(E^{-1})=E^{-1}-1$.  Since
$E_\varepsilon^{-1}=(1-\varepsilon)E_0^{-1}
+\varepsilon E_1^{-1}$, this linear boundary condition gives
$k_{\partial\overline{\B^2},g_\varepsilon}=1$.  Moreover, writing
$D_\varepsilon=(1-\varepsilon)E_1+\varepsilon E_0$, the computation in
\cite[Lemma~2.1]{Koerber} gives
\begin{equation}
  \begin{aligned}
    -\Delta_{\mathrm{Euc}}\log E_\varepsilon
    &=
    \frac{(1-\varepsilon)E_1E_0^2K_g
      +\varepsilon E_0E_1^2K_{g_1}}
    {D_\varepsilon} \\
    &\quad+
    \frac{\varepsilon(1-\varepsilon)}{D_\varepsilon^2}
    \left|
    \sqrt{\frac{E_1}{E_0}}\,\nabla_{\mathrm{Euc}}E_0
    -
    \sqrt{\frac{E_0}{E_1}}\,\nabla_{\mathrm{Euc}}E_1
    \right|_{g_{\mathrm{Euc}}}^{\,2}
    >0.
  \end{aligned}
\end{equation}
Thus $K_{g_\varepsilon}>0$.  By the strict case, there exists a
bi-Lipschitz homeomorphism
$T_\varepsilon:\overline{\B^2}\to\overline{\B^2}$ such that,
with
$a_\varepsilon=\vol_{g_\varepsilon}(\B^2)/\omega_2$ and
$L_\varepsilon=\Lip(T_\varepsilon)<1$,
\begin{equation}\label{eq:disk-approx-bilip}
   \frac{a_\varepsilon}{L_\varepsilon}|z-z'|
  \leq\dist_{g_\varepsilon}(T_\varepsilon z,T_\varepsilon z')
  \leq L_\varepsilon|z-z'|, \quad \forall\,z,z'\in\overline{\B^2},
  \qquad
  (T_\varepsilon)_\#\frac{\d x}{\omega_2}
  =\frac{\d\vol_{g_\varepsilon}}{\vol_{g_\varepsilon}(\B^2)}.
\end{equation}
Choose a sequence $\varepsilon_j\downarrow0$.  Since
$g_{\varepsilon_j}\to g$ smoothly, the upper bound in
\eqref{eq:disk-approx-bilip} gives a uniform Lipschitz bound with respect to
$g$.  By the Arzel\`a--Ascoli theorem, after passing to a subsequence,
$T_{\varepsilon_j}$ converges uniformly to a map
$T:\overline{\B^2}\to\overline{\B^2}$.  Using
$g_{\varepsilon_j}\to g$ smoothly and $L_{\varepsilon_j}\leq1$, passing to
the limit in \eqref{eq:disk-approx-bilip} yields
\begin{equation}
 a|z-z'|\leq\dist_g(Tz,Tz')\leq|z-z'|,
 \quad \forall\,z,z'\in\overline{\B^2},\qquad
 T_\#\frac{\d x}{\omega_2}
 =\frac{\d\vol_g}{\vol_g(\B^2)}.
\end{equation}
The distance bounds make $T$ injective, and the measure identity and
compactness make it surjective.  Thus $T$ is a bi-Lipschitz homeomorphism, and
Lemma~\ref{lem:inverse-bound} gives the lower bound in
Theorem~\ref{thm:disk}.  In particular,
$T(\partial\overline{\B^2})=\partial\overline{\B^2}$.
Applying Milman's contraction principle with boundary
\cite[Theorem~3.6]{Milman} directly to $T$ gives the
stated Dirichlet and Neumann spectral inequalities.
\end{proof}

\section{Hemisphere contractions}\label{sec:hemisphere}
In this section we prove the hemisphere results using reflection symmetry in
the closed spherical inverse mean-curvature flow construction.  We first
treat uniform measures on geodesically convex subsets in every dimension.
In dimension two, an equivariant refinement yields contractions onto
Alexandrov disks of curvature at least one; applying this result to canonical
conformal metrics gives weighted contractions.

\subsection{Uniform targets}

Let $n\geq2$.  We identify $\Sph^n$ with the equator
$\Sph^{n+1}\cap\{s=0\}$ and let $\mathcal R(x,s)=(x,-s)$ be reflection across
this equator.  The closed construction has the following equivariant form.

\begin{lemma}\label{lem:hemisphere-equivariant-closed}
Let $\Sigma^n\subseteq\Sph^{n+1}$ be a smooth, closed, connected, embedded,
strictly convex hypersurface contained in an open hemisphere and invariant
under $\mathcal R$.  Assume that $\Sigma\cap\{s>0\}$ is connected and set
$\Sigma^+:=\Sigma\cap\{s\geq0\}$.  Then there exist an
$\mathcal R$-invariant round equator $E\subseteq\Sph^{n+1}$, a closed round
hemisphere $E^+\subseteq E$, and a bi-Lipschitz homeomorphism
\[
 P:E^+\longrightarrow\Sigma^+
\]
such that
\begin{equation}\label{eq:hemisphere-equivariant-half-measure}
	P_\#\frac{\d\vol_E|_{E^+}}{|E^+|}
	=\frac{\d\vol_\Sigma|_{\Sigma^+}}{|\Sigma^+|}.
\end{equation}
With $b_\Sigma:=|\Sigma^+|/|E^+|$,
\begin{equation}\label{eq:hemisphere-equivariant-half-distance}
 b_\Sigma\dist_E(z,z')
 \leq\dist_\Sigma(Pz,Pz')
 \leq\dist_E(z,z'),
 \qquad \forall\,z,z'\in E^+. 
\end{equation}
\end{lemma}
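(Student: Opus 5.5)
Run the closed construction of Theorem~\ref{thm:extrinsic-unified}~(i) equivariantly. Since $\Sigma$ is $\mathcal R$-invariant and $\mathcal R$ is an ambient isometry, uniqueness of the Makowski--Scheuer solution of \eqref{eq:spherical-imcf} gives $\mathcal R(\Sigma_t)=\Sigma_t$ for all $t\in[0,\tau)$. It also gives, on the parameter manifold, an involution $\rho:=X_0^{-1}\circ\mathcal R\circ X_0$ with $\mathcal R\circ X_t=X_t\circ\rho$. This holds because $\mathcal R\circ X_t\circ\rho$ solves the same flow with the same initial data; the normal is preserved since $\mathcal R$ maps the convex body to itself. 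Hence $g_t$ and $\d\vol_t$ are $\rho$-invariant.

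Next I would identify the limiting equator $E$. The flow converges to an equator $E=\{r=\pi/2\}$ with respect to some centre $p$. The limit of the invariant sets $\Sigma_t$ is $\mathcal R$-invariant, so $E$ is $\mathcal R$-invariant. Then $\mathcal R$ restricts to an isometric involution of $E\cong\Sph^n$. If it is a reflection, let $E^+:=E\cap\{s\geq0\}$, a closed hemisphere. I must exclude the case $\mathcal R|_E=\operatorname{id}$, i.e.\ $E=\{s=0\}$. That case is fine too, but then the fixed-point set is all of $E$, which is incompatible with the equivariance below, so I expect $p$ to lie on $\{s=0\}$.

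The radial parametrizations $Q_t$ over $E$ with centre $p$ commute with $\mathcal R$ once $\mathcal R p=\pm p$. Hence $T_t=X_t^{-1}\circ Q_t$ satisfies $T_t\circ\mathcal R|_E=\rho\circ T_t$. The limit map $T$ of Proposition~\ref{prop:endpoint-contraction} is obtained by uniform convergence along a subsequence, so it inherits this equivariance. $T$ therefore maps the fixed set $E\cap\{s=0\}$ into $\operatorname{Fix}(\rho)=X_0^{-1}(\Sigma\cap\{s=0\})$. Being a homeomorphism, it sends the two complementary components of $E\setminus\{s=0\}$ onto the two components of $\Sigma\setminus\{s=0\}$; these components are $\Sigma\cap\{s>0\}$, which is connected by hypothesis, and its mirror image. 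After possibly replacing $E^+$ by $\mathcal R(E^+)$, define $P:=X_0\circ T|_{E^+}:E^+\to\Sigma^+$, a homeomorphism.

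For the measure identity \eqref{eq:hemisphere-equivariant-half-measure}, $T_\#$ maps the normalized volume of $E$ to that of $\Sigma$. Restricting to $E^+$ gives $P_\#(\d\vol_E|_{E^+})=b\,\d\vol_\Sigma|_{\Sigma^+}$ with $b=|\Sigma|/|E|$. The equator fixed sets are null, so $|E^+|=|E|/2$ and $|\Sigma^+|=|\Sigma|/2$, and therefore $b=b_\Sigma$. For the distances \eqref{eq:hemisphere-equivariant-half-distance}, where $\dist_E$ and $\dist_\Sigma$ are the full intrinsic distances on the closed manifolds, the global bound $b\,\dist_E\leq\dist_\Sigma(T\cdot,T\cdot)\leq\dist_E$ follows from the limit inequality in Step~2 of Proposition~\ref{prop:endpoint-contraction} and restricts directly to $E^+$.

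The main obstacle is the orientation step: showing that the terminal equator is $\mathcal R$-invariant with $\mathcal R|_E$ a genuine reflection, and that the radial charts can be chosen $\mathcal R$-equivariantly. Makowski--Scheuer give convergence to some equator, and its centre is determined only as a limit. I would handle this by noting that $\mathcal R$ fixes the convex body, so its polar points, and hence the limiting centre, satisfy $\mathcal R p=\pm p$. For $\Sigma$ invariant with a connected half, the body meets $\{s=0\}$ and is symmetric. The mixed alternative $\mathcal R p=-p$ with $E=\{s=0\}$ must then be ruled out using that the enclosed body is contained in an open hemisphere symmetric under $\mathcal R$.
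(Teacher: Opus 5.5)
Your proposal is correct and follows the paper's overall scheme: equivariance of the flow by uniqueness, equivariant radial charts and endpoint limit, splitting along the fixed sets, and restricting the global pushforward and distance bounds. It differs from the paper in how the terminal pole is located.

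\textbf{The paper's route.} The paper uses Gerhardt's polar duality. The pole $p$ of $E$ is the extinction point of the $\mathcal R$-invariant polar flow, so $\mathcal Rp=p$ at once, and the case $E=\{s=0\}$ never arises.

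\textbf{Your route.} You use only the radial-graph convergence from Theorem~\ref{thm:extrinsic-unified}~(i). Since $\Sigma_t\to E$ in Hausdorff distance, $\mathcal RE=E$ and hence $\mathcal Rp=\pm p$, after which $E=\{s=0\}$ must be excluded. This is more self-contained, since no polar flow is needed, but it has an extra case. Close that case directly rather than via the ``symmetric open hemisphere'' remark in your last paragraph, which by itself decides nothing. If $\mathcal Rp=-p$, then $\mathcal R$ maps each half great circle $r\mapsto\cos r\,p+\sin r\,z$ ($z\in E$) to itself by $r\mapsto\pi-r$. Invariance of $\Sigma_t$ and uniqueness of its radial-graph representation then force $u_t\equiv\pi/2$, so $\Sigma_t=E$ would be totally geodesic, contradicting preserved strict convexity. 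This is what your ``incompatible with the equivariance'' remark amounts to.

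\textbf{Two small slips.}
\begin{enumerate}
\item You need $T$ to map $E\cap\{s=0\}$ \emph{onto} $\operatorname{Fix}(\rho)$, not merely into it. This follows from equivariance and injectivity: if $\rho(Tz)=Tz$, then $T(\mathcal Rz)=Tz$, so $\mathcal Rz=z$. Ontoness is what matches the complementary components and gives $T(E^+)=\Sigma^+$ rather than just $T(E^+)\subseteq\Sigma^+$.
\item The restricted pushforward is $P_\#(\d\vol_E|_{E^+})=b^{-1}\d\vol_\Sigma|_{\Sigma^+}$, not $b\,\d\vol_\Sigma|_{\Sigma^+}$. Comparing total masses then gives $b_\Sigma=b$ directly, without needing nullity of the fixed sets.
\end{enumerate}
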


\begin{proof}
Run the inverse mean curvature flow \eqref{eq:spherical-imcf} from $\Sigma$ and
write $r=\mathcal R|_\Sigma$.  Uniqueness gives
$X_t\circ r=\mathcal R\circ X_t$.  By spherical polar duality
\cite[Section~4 and Theorem~1.1]{GerhardtSphere}, the pole $p$ of the
limiting equator $E=p^\perp\cap\Sph^{n+1}$ is the unique extinction point of
the associated polar flow.  This flow is also $\mathcal R$-invariant, so
$\mathcal R p=p$.  Hence $p\in\{s=0\}$, $e_s\in E$, and
$\mathcal R|_E$ is a nontrivial reflection.

Near the terminal time, $\Sigma_t$ is uniquely represented as a radial graph
over $E$.  The radial parametrizations $Q_t:E\to\Sigma_t$ in the proof of
Theorem~\ref{thm:extrinsic-unified}~(i) are therefore
$\mathcal R$-equivariant, as are $P_t=X_t^{-1}\circ Q_t$.  Applying the endpoint
argument in that proof yields an equivariant bi-Lipschitz homeomorphism
$P:E\to\Sigma$ with $\Lip(P)\leq1$ and
$P_\#(\d\vol_E/|E|)=\d\vol_\Sigma/|\Sigma|$.

Equivariance and injectivity show that $P$ maps the fixed equator of
$\mathcal R|_E$ onto $\Sigma\cap\{s=0\}$.  Since the fixed equator has zero
volume and $P$ is bi-Lipschitz, $\Sigma\cap\{s=0\}$ also has zero volume.
Since $\Sigma\cap\{s>0\}$ is connected, reflection symmetry implies that
$\Sigma\cap\{s<0\}$ is connected as well.  Hence these are precisely the two
components of $\Sigma\setminus\bigl(\Sigma\cap\{s=0\}\bigr)$. 
After interchanging the two hemispheres of $E$ if necessary, we therefore
have $P(E^+)=\Sigma^+$.  Reflection symmetry yields
$|E^+|=|E|/2$ and $|\Sigma^+|=|\Sigma|/2$,
and hence $b_\Sigma=|\Sigma|/|E|$.  Applying
Lemma~\ref{lem:inverse-bound} to the global map $P$ and then restricting to
$E^+$ gives \eqref{eq:hemisphere-equivariant-half-distance}.  Restricting
the global pushforward identity to the corresponding halves gives
\eqref{eq:hemisphere-equivariant-half-measure}. 
\end{proof}

We next approximate an arbitrary convex target by the upper halves of
strictly convex hypersurfaces invariant under reflection.  The construction
also gives the required convergence of volume measures and intrinsic
distances. 
\begin{lemma}\label{lem:hemisphere-thin-approximation}
Let $K\subseteq\Sph^n_+$ be closed and geodesically convex, with $|K|>0$.
There exist smooth, closed, connected, embedded, strictly convex hypersurfaces
$\Sigma_j\subseteq\Sph^{n+1}$, contained in an open hemisphere and invariant
under $\mathcal R$, such that, with
$\Sigma_j^+:=\Sigma_j\cap\{s\geq0\}$,
\begin{equation}\label{eq:hemisphere-thin-convergence}
 \dist_{\mathrm H}(\Sigma_j^+,K)\to0,
 \quad
 \d\vol_{\Sigma_j}|_{\Sigma_j^+}
 \rightharpoonup\mathbf 1_K\d\vol_{\mathrm{can}},
\end{equation}
as $j\to\infty$.  Here
$\dist_{\mathrm H}$
denotes Hausdorff distance in $\Sph^{n+1}$. Moreover, they may be chosen so
that whenever $p_j,q_j\in\Sigma_j^+$ converge respectively to $p,q\in K$
in $\Sph^{n+1}$, then
\begin{equation}\label{eq:hemisphere-intrinsic-distance-limit}
 \limsup_{j\to\infty}\dist_{\Sigma_j}(p_j,q_j)
 \leq\dist_{\mathrm{can}}(p,q).
\end{equation}
In addition, $\Sigma_j\cap\{s>0\}$ is connected for every $j$. 
\end{lemma}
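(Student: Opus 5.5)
The plan is to build $\Sigma_j$ as boundaries of thin, reflection-symmetric, strictly convex "pillows" around $K$ in the ambient $\Sph^{n+1}$. Since $K\subseteq\Sph^n_+$ is closed, geodesically convex, and has positive volume, it lies in a closed hemisphere of the equator $\Sph^n=\{s=0\}$. I will first shrink $K$ slightly, replacing it by $K_\eta\subseteq\operatorname{int}\Sph^n_+$ with $K_\eta\to K$ in Hausdorff distance and in volume. For this I use a spherical homothety toward an interior point $o$ of $K$, which stays inside the open hemisphere centered at $o$. I then approximate $K_\eta$ from outside by smooth, strictly geodesically convex domains $K'\subseteq\Sph^n$. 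One way is to pass to the gnomonic (central) projection from the centre of an open hemisphere containing $K_\eta$; there geodesic convexity becomes Euclidean convexity. Standard smoothing of the support function by mollification, plus a small multiple of the unit ball, gives smooth strictly convex bodies converging in Hausdorff distance. Pulled back, they give smooth strictly convex $K'$.

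Next I thicken $K'$ in the $s$-direction. I work in the open hemisphere of $\Sph^{n+1}$ centered at $o$ and use gnomonic coordinates $(y,\sigma)\in\R^n\times\R$ there. In these coordinates $\mathcal R$ is $\sigma\mapsto-\sigma$, great spheres are affine hyperplanes, and convexity is Euclidean convexity. Let $C\subseteq\R^n$ be the image of $K'$. I take the body
\[
 B_\varepsilon:=\{(y,\sigma):\ \phi(y)+\sigma^2/\varepsilon^2\leq 0\},
\]
where $\phi$ is a smooth, strictly convex defining function for $C$ with nonvanishing gradient on $\partial C$. Concretely, $\phi$ can be built from the smoothed gauge of $C$ plus a small multiple of $|y|^2$. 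Then $B_\varepsilon$ is smooth, strictly convex, and $\mathcal R$-invariant. Its boundary $\Sigma$ is the union of the two graphs $\sigma=\pm\varepsilon\sqrt{-\phi(y)}$ over $C$, and $\Sigma\cap\{s>0\}$ is the upper graph over $\operatorname{int}C$, which is connected. Strict convexity of the boundary hypersurface in the sphere follows from strict Euclidean convexity in gnomonic coordinates, because projective maps preserve strict convexity of hypersurfaces. Choosing $K'=K'_j$, and then $\varepsilon=\varepsilon_j\to0$ fast enough, gives $\Sigma_j^+\to K$ in Hausdorff distance.

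For the measures, $\Sigma_j^+$ is the graph of $w_j=\varepsilon_j\sqrt{-\phi_j}$ over $K'_j$, pushed into the sphere. Its volume density relative to the equatorial one is $\sqrt{1+|\nabla w_j|^2}$ up to factors tending uniformly to $1$. This density blows up only near $\partial K'_j$. I expect this to be the main obstacle: showing the boundary layer carries vanishing mass. I will handle it by convexity. For convex graphs, the area of the graph over a thin collar $\{-\phi_j<\delta\}$ is at most the area of the collar's boundary (the portion of a convex body's boundary enclosed by a "cylinder" is monotone), so it is $O(\delta)+O(\varepsilon_j)$. Off the collar, $|\nabla w_j|\leq C_\delta\varepsilon_j\to0$. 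Letting $j\to\infty$ and then $\delta\to0$ gives weak convergence to $\mathbf 1_K\d\vol_{\mathrm{can}}$.

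For \eqref{eq:hemisphere-intrinsic-distance-limit}, given $p_j\to p$ and $q_j\to q$, I project $p_j,q_j$ vertically to $y$-points $p_j',q_j'\in K'_j$. I join these by the $K'_j$-geodesic segment, which lies in the convex set, and lift it to the graph. The lifted length exceeds the base length by at most the extra variation of $w_j$. Near the boundary I bound the vertical excursion by $2\sup w_j=O(\varepsilon_j)$, using monotonicity in each region and the fact that a concave function restricted to a segment is unimodal, so the total variation of $w_j$ along the segment is at most $2\max w_j$. The short vertical pieces from $p_j$ to $p_j'$ contribute $O(\varepsilon_j)$, for the same reason. Hence $\dist_{\Sigma_j}(p_j,q_j)\leq\dist_{\mathrm{can}}(p'_j,q'_j)+O(\varepsilon_j)\to\dist_{\mathrm{can}}(p,q)$.
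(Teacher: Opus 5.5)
Your construction is, from the gnomonic chart onward, the one in the paper: a smooth uniformly convex body $C=\{\phi\le0\}$, the symmetric thickening $\{\phi+\sigma^2/\varepsilon^2\le0\}$, and the upper sheet as the graph of the concave function $w=\varepsilon\sqrt{-\phi}$. The intrinsic-distance step is also the paper's: the bound $\operatorname{Var}(w\circ\ell)\le2\max w$ along segments, followed by a diagonal choice of $\varepsilon_j$.

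\textbf{The gap is in your first step.} The geodesic dilation $D_\lambda=\exp_o\circ\lambda\exp_o^{-1}$ does not preserve geodesic convexity for sets reaching farther than $\pi/2$ from $o$. Also, $K$ need not lie in the open hemisphere centered at $o$. Take $K=\Sph^n_+$ and $o\neq N$ close to $\partial\Sph^n_+$.
\begin{itemize}
\item In geodesic polar coordinates $(s,\theta)$ about $-o$, the dilation is exactly $(s,\theta)\mapsto(\lambda s+(1-\lambda)\pi,\theta)$, a radial push away from $-o$.
\item This bends the piece of $\partial\Sph^n_+$ passing near $-o$ around $-o$. Hence $D_\lambda(K)$ is not locally convex there for $\lambda$ close to $1$.
\end{itemize}
So your $K_\eta$ need not be geodesically convex, and then it cannot be a Hausdorff limit of convex sets $K'$. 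Moreover, a gnomonic chart centered at $o$ need not contain $K'$.

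The repair is short. Either use $K\cap\{x\cdot N\ge\delta\}$ as the paper does, or replace $D_\lambda(K)$ by its geodesic convex hull. The first set is convex and converges to $K$, because boundary points are reached along segments from an interior point and the equator is null. The second still lies in $K\cap\{x\cdot N>0\}$. In either case, center every gnomonic chart at $N$. Also, define $C$ directly as a regular sublevel set $\{\phi\le0\}$ of your strongly convex function. A smoothed gauge plus a multiple of $|y|^2$ has zero set different from a prescribed $C$.

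\textbf{Volume convergence.} Here you use a different tool from the paper, which proves $|\nabla s|\le C_j\varepsilon\,\dist_{\mathrm{Euc}}(x,\partial A'_j)^{-1/2}$ and applies dominated convergence. Your convexity idea is sound, and the coarea formula makes it precise. The level sets of $w$ bound nested convex subsets of $C$, so
\[
\int_C|\nabla w|\,\d y\le\max w\cdot\Hh^{n-1}(\partial C)=O(\varepsilon).
\]
This controls the entire boundary layer at once, so no collar is needed. Two details in your write-up need correcting:
\begin{itemize}
\item Near $\partial C$, the round area density of the sheet differs from $\sqrt{1+|\nabla w|^2}$ by a factor containing $(w-y\cdot\nabla w)^2$. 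That factor is bounded but does not tend to $1$.
\item Your collar constants depend on $j$. So the limit must be taken as $\varepsilon\to0$ for fixed $j$, then diagonalized, not as $j\to\infty$ first.
\end{itemize}
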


\begin{proof}
\noindent\emph{Step 1: smooth targets in an open hemisphere.}
Let $N$ be the center of $\Sph^n_+$.  Since $|K|>0$, choose
$p\in\operatorname{int}(K)$ and set
$\delta_j:=(p\cdot N)/(j+1)$ and
$K_j:=K\cap\{x\cdot N\geq\delta_j\}$.
Then $K_j$ is contained in the open hemisphere centered at $N$ and
\begin{equation}
 \dist_{\mathrm H}(K_j,K)\to0,
 \qquad
 \mathbf 1_{K_j}\d\vol_{\mathrm{can}}\rightharpoonup
 \mathbf 1_K\d\vol_{\mathrm{can}}.
\end{equation}
Indeed, points of $K$ on the boundary equator are approached along the
geodesic segments from $p$, and that equator has zero $n$-dimensional volume.

Regard $N$ as $(N,0)\in\Sph^{n+1}$ and use the gnomonic coordinates 
\[
 \Gamma(Y)=\frac{Y}{Y\cdot N}-N,
 \qquad
 G(V)=\frac{N+V}{\sqrt{1+|V|^2}},
\]
on $\mathcal H_N:=\{Y\in \Sph^{n+1}:Y\cdot N>0\}$.  They commute with $\mathcal R$ and
map affine segments to reparametrized minimizing great-circle arcs.  They
therefore identify Euclidean convexity with geodesic convexity.  Thus
$A_j:=\Gamma(K_j)\subseteq\R^n$ is a compact convex body.

To approximate $A_j$ by smooth uniformly convex bodies, let $h_j$ be its
support function and set
$f_j(x):=\max_{u\in\Sph^{n-1}}(x\cdot u-h_j(u))$.
Then $f_j$ is convex and $1$-Lipschitz, since it is a supremum of affine
functions with unit slopes, and the separating-hyperplane theorem implies 
\[
 A_j=\{f_j\leq0\},
 \qquad
 \max\{f_j,0\}=\dist_{\mathrm{Euc}}(\,\cdot\,,A_j).
\]
For $\eta>0$, let $F_{j,\eta}:=f_j*\varphi_\eta+\eta|x|^2$,
where $\varphi_\eta$ is a standard mollifier.  Then
$D^2F_{j,\eta}\geq2\eta I$ and $F_{j,\eta}\to f_j$ locally uniformly.
Choose $R_j>0$ such that $A_j\subseteq\B_{R_j}(0)$ and set
$c_{j,\eta}:=2\eta(1+R_j^2)$ and
$A_{j,\eta}:=\{F_{j,\eta}\leq c_{j,\eta}\}$.  Since
$\norm{f_j*\varphi_\eta-f_j}_{C^0}\leq\eta$,
\[
 A_j\subseteq\{F_{j,\eta}<c_{j,\eta}\}
 \subseteq A_{j,\eta}
 \subseteq A_j+(c_{j,\eta}+\eta)\overline{\B^n}.
\]
Moreover, $F_{j,\eta}$ is strongly convex and coercive.  Its only critical
point is its global minimum, whose value is below $c_{j,\eta}$; hence
$c_{j,\eta}$ is a regular value.  Thus $A_{j,\eta}$ is a smooth uniformly
convex compact body and converges to $A_j$ in Hausdorff distance.

Set $S_{j,\eta}:=G(A_{j,\eta}\times\{0\})$.  The preceding inclusions, the
local Lipschitz continuity of $G$, and dominated convergence yield, for each
fixed $j$,
\[
 \dist_{\mathrm H}(S_{j,\eta},K_j)\to0,
 \qquad
 \mathbf 1_{S_{j,\eta}}\d\vol_{\mathrm{can}}
 \rightharpoonup\mathbf 1_{K_j}\d\vol_{\mathrm{can}}
\]
as $\eta\downarrow0$, where
$\bigl(G|_{\R^n\times\{0\}}\bigr)^*\d\vol_{\mathrm{can}}
=(1+|x|^2)^{-(n+1)/2}\d x$.  Using the bounded--Lipschitz metric on finite
measures on $\Sph^{n+1}$, choose $\eta_j\downarrow0$ so that both the Hausdorff and
bounded--Lipschitz errors above are at most $j^{-1}$.
Set $\rho_j:=F_{j,\eta_j}-c_{j,\eta_j}$,
$A'_j:=A_{j,\eta_j}$, and $S_j:=S_{j,\eta_j}$.  Then
\begin{equation}\label{eq:hemisphere-smoothed-target-limit}
 \dist_{\mathrm H}(S_j,K)\to0,
 \qquad
 \mathbf 1_{S_j}\d\vol_{\mathrm{can}}\rightharpoonup
 \mathbf 1_K\d\vol_{\mathrm{can}}.
\end{equation}

\smallskip
\noindent\emph{Step 2: symmetric strictly convex thickenings.}
For $\varepsilon>0$, let
\[
 B_{j,\varepsilon}:=
 \left\{(x,s):\rho_j(x)+\frac{s^2}{\varepsilon^2}\leq0\right\},
 \qquad
 \Sigma_{j,\varepsilon}:=G(\partial B_{j,\varepsilon}).
\]
The defining function has positive-definite Hessian.  Hence
$B_{j,\varepsilon}$ is compact and strictly convex.  Its zero level is
regular: a critical point would have $s=0$ and would be the unique minimum
of $\rho_j$, where $\rho_j<0$.  Thus
$\partial B_{j,\varepsilon}$ is smooth, closed, and strictly convex.  The
gnomonic map also preserves strict convexity.  Indeed, if $y\in
\partial B_{j,\varepsilon}$, $\zeta$ is the Euclidean unit normal there,
and $v,w$ are tangent vectors, then, for compatible choices of normal,
\begin{equation}
	 \operatorname{II}_{\Sph^{n+1}}(\d G_yv,\d G_yw)
	=\frac{\operatorname{II}_{\R^{n+1}}(v,w)}
	{\sqrt{1+|y|^2}\sqrt{1+(y\cdot\zeta)^2}}. 
\end{equation}
Therefore $\Sigma_{j,\varepsilon}\subseteq\mathcal H_N$ is a smooth, closed,
connected, embedded, strictly convex hypersurface invariant under
$\mathcal R$.  Its upper open half is connected
because it is the graph over $\operatorname{int}(A'_j)$ of
$s_{j,\varepsilon}(x):=\varepsilon\sqrt{-\rho_j(x)}$.

\smallskip
\noindent\emph{Step 3: Hausdorff and volume convergence.}
Parametrize the upper half by
$\Psi_{j,\varepsilon}(x)=G(x,s_{j,\varepsilon}(x))$.  Near
$\partial A'_j$, the defining-function property gives
$-\rho_j(x)\geq c_j\dist_{\mathrm{Euc}}(x,\partial A'_j)$ for some $c_j>0$.
Together with
$\nabla s_{j,\varepsilon}=-\varepsilon\nabla\rho_j/(2\sqrt{-\rho_j})$
and the boundedness of $\nabla\rho_j$, this gives, after enlarging $C_j$ away
from the boundary,
\begin{equation}\label{eq:hemisphere-graph-gradient}
 |\nabla s_{j,\varepsilon}(x)|
 \leq \frac{C_j\varepsilon}
 {\sqrt{\dist_{\mathrm{Euc}}(x,\partial A'_j)}}.
\end{equation}
Thus $s_{j,\varepsilon}\to0$ uniformly and
$\Sigma_{j,\varepsilon}^+\to S_j$ in Hausdorff distance.  The graph
Jacobian is
\begin{equation}
	 J_{j,\varepsilon}(x)=
	\frac{\sqrt{1+|\nabla s_{j,\varepsilon}|^2+
			(s_{j,\varepsilon}-x\cdot\nabla s_{j,\varepsilon})^2}}
	{(1+|x|^2+s_{j,\varepsilon}^2)^{(n+1)/2}}. 
\end{equation}
It converges pointwise in $\operatorname{int}(A'_j)$ to
$(1+|x|^2)^{-(n+1)/2}$.  Estimate
\eqref{eq:hemisphere-graph-gradient} bounds it by
$C_j(1+\dist_{\mathrm{Euc}}(x,\partial A'_j)^{-1/2})$, which is integrable
in tubular coordinates.  Apply the smooth area formula first on
$A'_{j,\delta}:=
\{x\in A'_j:\dist_{\mathrm{Euc}}(x,\partial A'_j)\geq\delta\}$,
and then let $\delta\downarrow0$.  These sets exhaust
$\operatorname{int}(A'_j)$, while the omitted equatorial set is a smooth
$(n-1)$-dimensional submanifold and hence has zero $n$-volume.  The preceding
integrable bound and dominated convergence therefore give, for every
continuous $f$,
\begin{equation}
 \int_{\Sigma_{j,\varepsilon}^+}f\,\d\vol_{\Sigma_{j,\varepsilon}}
 \to \int_{S_j}f\,\d\vol_{\mathrm{can}}.
\end{equation}

\smallskip
\noindent\emph{Step 4: intrinsic distances and the diagonal sequence.}
Define the vertical projection by
$\pi_{j,\varepsilon}(\Psi_{j,\varepsilon}(x)):=G(x,0)$.
For $x,y\in A'_j$, let $\ell$ be the Euclidean segment joining them and set
$v:=s_{j,\varepsilon}\circ\ell$.  Since $-\rho_j$ is nonnegative and
concave and the square-root function is increasing and concave, $v$ is
nonnegative and concave.  Thus $v$ is absolutely continuous and
$\operatorname{Var}(v)\leq2\norm{s_{j,\varepsilon}}_{C^0(A'_j)}$.
On a fixed compact neighborhood of $A'_j\times\{0\}$, the differential of
$G$ is bounded and Lipschitz.  Hence, almost everywhere along $\ell$,
\begin{equation}
	\left|\d G_{(\ell,v)}(\ell',v')\right|
	\leq \left|\d G_{(\ell,0)}(\ell',0)\right|
	+C_j\bigl(|v|\,|\ell'|+|v'|\bigr). 
\end{equation}
Integrating over compact subsets of $\ell$ and then letting these
subsets exhaust $\ell$ yields
\begin{equation}
	 \operatorname{Length}\bigl(G(\ell,v)\bigr)
	\leq \operatorname{Length}\bigl(G(\ell,0)\bigr)
	+C_j\left(
	\norm{s_{j,\varepsilon}}_{C^0}\operatorname{Length}_{\mathrm{Euc}}(\ell)
	+\operatorname{Var}(v)\right). 
\end{equation}
Since $A'_j$ is bounded and
$\norm{s_{j,\varepsilon}}_{C^0}\to0$, the error is bounded by a quantity
$\omega_j(\varepsilon)\to0$, uniformly in $x,y$.  Moreover,
$G(\ell,0)$ is the minimizing short great-circle arc joining its endpoints.
Hence
\begin{equation}\label{eq:hemisphere-upper-sheet-distance}
 \dist_{\Sigma_{j,\varepsilon}}(p,q)
 \leq \dist_{\mathrm{can}}
 \bigl(\pi_{j,\varepsilon}(p),\pi_{j,\varepsilon}(q)\bigr)+
 \omega_j(\varepsilon),
 \qquad p,q\in\Sigma_{j,\varepsilon}^+.
\end{equation}

The uniform convergence of $s_{j,\varepsilon}$ also shows that the maximal
distance between a point of $\Sigma_{j,\varepsilon}^+$ and its vertical
projection tends to zero.  Choose $\varepsilon_j\downarrow0$ so that the
Hausdorff, bounded--Lipschitz, length and vertical-projection errors above
are all at most $j^{-1}$.
Denote 
$\Sigma_j:=\Sigma_{j,\varepsilon_j}$.  Together with
\eqref{eq:hemisphere-smoothed-target-limit}, this proves
\eqref{eq:hemisphere-thin-convergence}.  Finally, if
$p_j,q_j\in\Sigma_j^+$ converge to $p,q\in K$, respectively, then their
vertical projections converge to $p,q$.  Hence
\eqref{eq:hemisphere-upper-sheet-distance} yields 
\begin{equation}
	 \limsup_{j\to\infty}\dist_{\Sigma_j}(p_j,q_j)
	\leq\lim_{j\to\infty}\dist_{\mathrm{can}}
	\bigl(\pi_{j,\varepsilon_j}(p_j),
	\pi_{j,\varepsilon_j}(q_j)\bigr)
	=\dist_{\mathrm{can}}(p,q), 
\end{equation}
which proves \eqref{eq:hemisphere-intrinsic-distance-limit}.
\end{proof}

\begin{proof}[Proof of Theorem~\ref{thm:hemisphere-transport}]
\noindent\emph{Step 1: prelimit maps and compactness.}
Apply Lemma~\ref{lem:hemisphere-equivariant-closed} to the hypersurfaces in
Lemma~\ref{lem:hemisphere-thin-approximation}.  After identifying the
source halves with $\Sph^n_+$, we obtain bi-Lipschitz homeomorphisms
$T_j:\Sph^n_+\to\Sigma_j^+$ satisfying
\begin{equation}\label{eq:hemisphere-prelimit-estimates}
 \dist_{\mathrm{can}}(T_jz,T_jz')
 \leq\dist_{\Sigma_j}(T_jz,T_jz')
 \leq\dist_{\mathrm{can}}(z,z'),
 \qquad
 (T_j)_\#\sigma_+
 =\frac{\d\vol_{\Sigma_j}|_{\Sigma_j^+}}{|\Sigma_j^+|}.
\end{equation}
Let $P_j:E_j\to\Sigma_j$ denote the global equivariant endpoint map before
restriction.  Reflection symmetry gives
$|\Sigma_j|/|E_j|=|\Sigma_j^+|/|\Sph^n_+|=:b_j$.  Applying
Lemma~\ref{lem:inverse-bound} to $P_j$, rather than to its restriction, and
using $\Lip(P_j)\leq1$ yields 
\begin{equation}\label{eq:hemisphere-prelimit-lower}
 b_j\dist_{\mathrm{can}}(z,z')
 \leq\dist_{\Sigma_j}(T_jz,T_jz').
\end{equation}
By the Arzel\`a--Ascoli theorem and
\eqref{eq:hemisphere-thin-convergence}, after passing to a subsequence,
$T_j$ converges uniformly to a map $T:\Sph^n_+\to K$.  Passing to the limit
in the upper estimate in \eqref{eq:hemisphere-prelimit-estimates} shows that
$T$ is $1$-Lipschitz.

\smallskip
\noindent\emph{Step 2: the limiting measure and surjectivity.}
The weak convergence in
\eqref{eq:hemisphere-thin-convergence}, first tested against $1$, yields 
$|\Sigma_j^+|\to|K|$.  Hence, for every continuous $f$ on $\Sph^{n+1}$,
\begin{equation}
	 \int_{\Sph^n_+}f(Tx)\,\d\sigma_+(x)
	=\lim_{j\to\infty}\frac1{|\Sigma_j^+|}
	\int_{\Sigma_j^+}f\,\d\vol_{\Sigma_j}
	=\frac1{|K|}\int_K f\,\d\vol_{\mathrm{can}}. 
\end{equation}
This proves the pushforward identity in
Theorem~\ref{thm:hemisphere-transport}.  Its right-hand side has
support $K$, while the image of $T$ is compact and contained in $K$;
therefore $T$ is surjective.

\smallskip
\noindent\emph{Step 3: the lower Lipschitz bound.}
We have $b_j\to b_K$.  For fixed $z,z'$, apply
\eqref{eq:hemisphere-intrinsic-distance-limit} to $T_jz,T_jz'$ and combine it
with \eqref{eq:hemisphere-prelimit-lower} to obtain
$b_K\dist_{\mathrm{can}}(z,z')\leq\dist_{\mathrm{can}}(Tz,Tz')$.
Thus $T$ is injective with $b_K^{-1}$-Lipschitz inverse.  Together with
Step~2, this shows that $T$ is a bi-Lipschitz homeomorphism.  Applying
Lemma~\ref{lem:inverse-bound} to $T:\Sph^n_+\to K$, with
$L=\Lip(T)$, proves \eqref{eq:hemisphere-contraction} and the theorem.
\end{proof}

\subsection{Alexandrov disks and weighted targets}

Let $r_0:\Sph^2\to\Sph^2$ denote reflection across the boundary of
$\Sph^2_+$.  We first establish the following equivariant refinement of
Corollary~\ref{cor:alexandrov}, which will be applied to metric doubles.

\begin{proposition}[Equivariant contractions onto Alexandrov two-spheres]
\label{prop:hemisphere-equivariant-alexandrov}
Let $(X,\dist_X)$ be an Alexandrov surface of curvature at least one and
homeomorphic to $\Sph^2$.  Suppose that $X$ admits an isometric involution
$\iota$ whose fixed-point set is a simple closed curve, and assume that
$\iota$ exchanges the two components of the complement of this curve.  Then
there exists a bi-Lipschitz homeomorphism
\[
 F:(\Sph^2,g_{\mathrm{can}})\longrightarrow(X,\dist_X)
\]
such that
\begin{equation}\label{eq:equivariant-alexandrov-measure}
 F\circ r_0=\iota\circ F,
 \qquad
 F_\#\frac{\d\vol_{\mathrm{can}}}{|\Sph^2|}
 =\frac{\Hh_X^2}{\Hh^2(X)},
 \qquad
 L:=\Lip(F)\leq1.
\end{equation}
With $a_X:=\Hh^2(X)/|\Sph^2|$,
\begin{equation}\label{eq:equivariant-alexandrov-bilip}
 \frac{a_X}{L}\dist_{\mathrm{can}}(z,z')
 \leq\dist_X(Fz,Fz')
 \leq L\dist_{\mathrm{can}}(z,z'),
 \qquad \forall\,z,z'\in\Sph^2.
\end{equation}
\end{proposition}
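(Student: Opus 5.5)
We will follow the proof of Corollary~\ref{cor:alexandrov}, keeping track of the involution at every stage. Equivariance survives uniform limits: if $F_j\circ r_0=\iota\circ F_j$ and $F_j\to F$ uniformly, then $F\circ r_0=\iota\circ F$. So it suffices to produce equivariant smooth approximants, and then rerun the smooth endpoint argument equivariantly, as in Lemma~\ref{lem:hemisphere-equivariant-closed}.

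\emph{Equivariant conformal uniformization and smoothing.} By uniqueness of the conformal structure on $\Sph^2$, the conformal homeomorphism $\Phi:\Sph^2\to X$ from \cite[Notation~11.1]{LinWangXu} conjugates $\iota$ to an anticonformal involution of $\mathbb{CP}^1$. Its fixed set is a circle separating the sphere into two exchanged components. Any such involution is Möbius-conjugate to $r_0$, so after precomposing $\Phi$ with a Möbius map we may assume $\Phi\circ r_0=\iota\circ\Phi$. The pulled-back data $\dist_X^\Phi$ and $\mu_X^\Phi$ are then $r_0$-invariant. The heat regularization of \cite[Lemmas~11.7 and 11.8]{LinWangXu} is built from the conformal factor and the round heat kernel, both of which commute with the isometry $r_0$. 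Hence (possibly after averaging the conformal factor, if needed) we obtain $r_0$-invariant smooth metrics $g_j$ with $K_{g_j}\geq1$ and the same convergence properties.

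\emph{Equivariant smooth contractions.} For each $g_j$, scale to $(1-\varepsilon)g_j$ so that $K>1$, and apply Lu's theorem together with Lemma~\ref{lem:automatic-regularity} to get a smooth strictly convex embedding into $\Sph^3$. The key point is to make this embedding equivariant: $Y\circ r_0=\mathcal R\circ Y$ for a reflection $\mathcal R$ of $\Sph^3$. We would derive this from the rigidity (uniqueness up to ambient isometry) of strictly convex isometric embeddings in $\Sph^3$. Since $Y$ and $Y\circ r_0$ are both isometric embeddings of $g$, there is $A\in O(4)$ with $Y\circ r_0=A\circ Y$. Then $A^2$ fixes the full-dimensional image, so $A^2=\mathrm{id}$. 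The fixed set of $r_0$ is a curve whose image spans a $3$-dimensional subspace, since the surface lies in an open hemisphere and is strictly convex. Hence $A$ is a reflection. Now run IMCF as in Lemma~\ref{lem:hemisphere-equivariant-closed}: uniqueness makes the flow $\mathcal R$-equivariant and forces the terminal equator $E$ to be $\mathcal R$-invariant. The endpoint map $P$ is then equivariant, $P\circ(\mathcal R|_E)=\mathcal R\circ P$, with $\mathcal R|_E$ a reflection. Identifying $E$ with $\Sph^2$ by an isometry carrying $\mathcal R|_E$ to $r_0$, and composing with $Y^{-1}$, gives an equivariant contraction $S_j^\varepsilon$. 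Letting $\varepsilon\to0$ as in Step~2 of the proof of Theorem~\ref{thm:milman} preserves equivariance.

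\emph{Limit and bounds.} Set $F_j=\Phi\circ S_j$. Arzelà--Ascoli and the estimates in the proof of Corollary~\ref{cor:alexandrov} give a bi-Lipschitz, measure-preserving, $1$-Lipschitz limit $F$, which is equivariant by the observation in the first paragraph. The bound \eqref{eq:equivariant-alexandrov-bilip} then follows from the Alexandrov case of Lemma~\ref{lem:inverse-bound}.

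The main obstacle is the equivariant smoothing step: checking that the Lin--Wang--Xu regularization can be chosen $r_0$-invariant while keeping $K\geq1$. Averaging does not preserve curvature bounds in general, so we would need to verify directly that their construction is natural under isometries of the round sphere. The secondary difficulty is justifying that the ambient isometry $A$ is a reflection. For that we would use the rigidity half of Lu's result or Pogorelov's uniqueness theorem in elliptic space.
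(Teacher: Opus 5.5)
Your proposal is correct and follows essentially the same route as the paper. The shared steps are: Möbius-normalized equivariant uniformization; the $r_0$-invariant regularization $g_j=e^{2P_{t_j}u}g_{\mathrm{can}}$, where no averaging is needed because $u\circ r_0=u$ and the round heat semigroup commutes with $r_0$; rigidity in elliptic space producing a reflection $A\in O(4)$; the equivariant endpoint construction of Lemma~\ref{lem:hemisphere-equivariant-closed}; and limits that preserve equivariance. The only step stated too quickly is that $\mathcal R$-invariance of the terminal equator $E$ does not by itself make $\mathcal R|_E$ a nontrivial reflection, since it still allows $E=\operatorname{Fix}\mathcal R$. The paper gets $\mathcal R p=p$ for the pole of $E$ from the polar-duality argument in that lemma, and you should also note $A\neq\mathrm{id}$, which follows from injectivity.
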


\begin{proof}
\noindent\emph{Step 1: equivariant smooth approximation.}
Since $\iota$ exchanges the two sides of its fixed curve, it reverses
orientation.  By conformal uniformization
\cite[Theorem~7.6]{Troyanov}, followed by a M\"obius change of coordinates, there exists a conformal homeomorphism $\Phi:\Sph^2\to X$ such that
$\Phi\circ r_0=\iota\circ\Phi$.
Indeed, the induced map on $\Sph^2$ is an antiholomorphic involution, and
sending three points of its fixed circle to $0,1,\infty$ identifies it with
complex conjugation.  Set
$\dist_X^\Phi(p,q):=\dist_X(\Phi p,\Phi q)$ and
$\mu_X^\Phi:=(\Phi^{-1})_\#\Hh_X^2$.
In the conformal-potential sense used in
\cite[Notation~11.1]{LinWangXu}, write the pulled-back metric as
$e^{2u}g_{\mathrm{can}}$.  Since $\iota$ is an isometry and $\Phi$ is
equivariant, $u\circ r_0=u$ almost everywhere.  The round heat semigroup
commutes with $r_0$, so the regularizations
$g_j=e^{2P_{t_j}u}g_{\mathrm{can}}$, $t_j\downarrow0$, are
$r_0$-invariant.  By \cite[Lemmas~11.7 and~11.8]{LinWangXu}, we have
\begin{equation}\label{eq:equivariant-alexandrov-approximation}
 K_{g_j}\geq1,
 \qquad
 \varepsilon_j:=
 \norm{\dist_{g_j}-\dist_X^\Phi}_{C^0(\Sph^2\times\Sph^2)}
 \to0,
 \qquad
 \norm{\d\vol_{g_j}-\mu_X^\Phi}_{\mathrm{TV}}\to0.
\end{equation}
Set $V_j:=\vol_{g_j}(\Sph^2)$ and $a_j:=V_j/|\Sph^2|$.  Then
$V_j\to\Hh^2(X)$ and $a_j\to a_X$.

\smallskip
\noindent\emph{Step 2: smooth equivariant contractions.}
Fix $j$ and set $g_{j,\varepsilon}:=(1-\varepsilon)g_j$ for
$0<\varepsilon<1$.  Then $K_{g_{j,\varepsilon}}>1$, and by the spherical Weyl
theorem \cite[Theorem~1.3]{Lu} and
Lemma~\ref{lem:automatic-regularity}, there exists a smooth strictly convex
isometric embedding
$X_{j,\varepsilon}:(\Sph^2,g_{j,\varepsilon})\to\Sph^3$, whose image lies in
an open hemisphere by \cite[Theorem~1.1]{doCarmoWarner}.
Let $\pi:\Sph^3\to\mathbb E^3=\Sph^3/\{\pm1\}$ be the elliptic quotient.
Projecting to elliptic space and applying global rigidity
\cite[Chapter~VI, Section~5, Theorem~1]{PogorelovElliptic} to
$\pi\circ X_{j,\varepsilon}$ and
$\pi\circ X_{j,\varepsilon}\circ r_0$ then yields a lift
$A_{j,\varepsilon}\in O(4)$.  The two spherical lifts differ by a constant
sign, which may be absorbed into $A_{j,\varepsilon}$, so
$X_{j,\varepsilon}\circ r_0
=A_{j,\varepsilon}\circ X_{j,\varepsilon}$.
Applying this identity twice shows that $A_{j,\varepsilon}^2$ is the
identity on the embedded surface.  This surface spans $\R^4$.  Otherwise, it
would lie in a great sphere of dimension at most two, and invariance of
domain would force it to be a great two-sphere, contrary to strict
convexity.  Hence $A_{j,\varepsilon}^2=\operatorname{Id}$.  Let
$V^+_{j,\varepsilon}$ be the $+1$ eigenspace of $A_{j,\varepsilon}$.  It
contains the image of the equator fixed by $r_0$, so
$\dim V^+_{j,\varepsilon}\geq2$.  Moreover, $A_{j,\varepsilon}$ is
nontrivial by injectivity of $X_{j,\varepsilon}$.  If
$\dim V^+_{j,\varepsilon}=2$, that image would be the great circle
$V^+_{j,\varepsilon}\cap\Sph^3$ and would have zero normal curvature,
contrary to strict convexity.  Thus $\dim V^+_{j,\varepsilon}=3$, and
$A_{j,\varepsilon}$ is reflection across the great two-sphere
$V^+_{j,\varepsilon}\cap\Sph^3$.

Put $\Sigma_{j,\varepsilon}:=X_{j,\varepsilon}(\Sph^2)$.  The polar-flow
argument and the global construction in the proof of
Lemma~\ref{lem:hemisphere-equivariant-closed} give an
$A_{j,\varepsilon}$-invariant round equator $E_{j,\varepsilon}$, on which
$A_{j,\varepsilon}$ is a nontrivial reflection, and an equivariant
bi-Lipschitz homeomorphism
$P_{j,\varepsilon}:E_{j,\varepsilon}\to\Sigma_{j,\varepsilon}$ preserving
normalized volume.  Choose a round isometry
$J_{j,\varepsilon}:\Sph^2\to E_{j,\varepsilon}$ which intertwines $r_0$
with $A_{j,\varepsilon}|_{E_{j,\varepsilon}}$, and set
\[
 S_{j,\varepsilon}:=X_{j,\varepsilon}^{-1}
 \circ P_{j,\varepsilon}\circ J_{j,\varepsilon}.
\]
By construction, $S_{j,\varepsilon}$ is equivariant and preserves
normalized volume:
\begin{equation}
	S_{j,\varepsilon}\circ r_0=r_0\circ S_{j,\varepsilon},
	\qquad
	(S_{j,\varepsilon})_\#
	\frac{\d\vol_{\mathrm{can}}}{|\Sph^2|}
	=\frac{\d\vol_{g_j}}{V_j}. 
\end{equation}
Since the volume ratio in the global construction is
$(1-\varepsilon)a_j$ and
$\dist_{g_{j,\varepsilon}}=\sqrt{1-\varepsilon}\,\dist_{g_j}$, the global
distance estimate yields 
\begin{equation}
	 \sqrt{1-\varepsilon}\,a_j\dist_{\mathrm{can}}(z,z')
	\leq\dist_{g_j}(S_{j,\varepsilon}z,S_{j,\varepsilon}z')
	\leq(1-\varepsilon)^{-1/2}\dist_{\mathrm{can}}(z,z'). 
\end{equation}
Letting $\varepsilon\downarrow0$ and applying the compactness argument from
Step~2 of the proof of Theorem~\ref{thm:milman}, we obtain an
$r_0$-equivariant bi-Lipschitz homeomorphism
$S_j:\Sph^2\to(\Sph^2,g_j)$ such that
\begin{equation}\label{eq:equivariant-smooth-maps}
 S_j\circ r_0=r_0\circ S_j,
 \quad
 a_j\dist_{\mathrm{can}}(z,z')
 \leq\dist_{g_j}(S_jz,S_jz')
 \leq\dist_{\mathrm{can}}(z,z'),
 \quad
 (S_j)_\#\frac{\d\vol_{\mathrm{can}}}{|\Sph^2|}
 =\frac{\d\vol_{g_j}}{V_j}.
\end{equation}

\smallskip
\noindent\emph{Step 3: passage to the Alexandrov limit.}
Set $F_j:=\Phi\circ S_j$.  The compactness argument in Step~2 of the proof of
Corollary~\ref{cor:alexandrov} applies without change.  Moreover,
$F_j\circ r_0=\iota\circ F_j$, so the uniform limit is equivariant.  Thus there exists a
bi-Lipschitz homeomorphism $F:\Sph^2\to X$ such that 
\begin{equation}
	 F\circ r_0=\iota\circ F,
	\qquad
	F_\#\frac{\d\vol_{\mathrm{can}}}{|\Sph^2|}
	=\frac{\Hh_X^2}{\Hh^2(X)},
	\qquad
	a_X\dist_{\mathrm{can}}(z,z')
	\leq\dist_X(Fz,Fz')
	\leq\dist_{\mathrm{can}}(z,z'). 
\end{equation}
Applying Lemma~\ref{lem:inverse-bound} to $F$ yields
\eqref{eq:equivariant-alexandrov-bilip}.
\end{proof}

\begin{proof}[Proof of Theorem~\ref{thm:alexandrov-disk}]
Let $X=DY$ be the metric double of $Y$ along its boundary.  The Alexandrov
doubling theorem \cite[Theorem~2.1]{Petrunin} shows that $X$ is an Alexandrov
two-sphere of curvature at least one.  Let $\iota$ be the involution
exchanging the two copies of $Y$.
Proposition~\ref{prop:hemisphere-equivariant-alexandrov} gives an equivariant map
$F:\Sph^2\to X$.  Equivariance and injectivity identify the fixed sets and
the two component closures.  After replacing $F$ by $\iota\circ F$ if
necessary, set $T:=F|_{\Sph^2_+}:\Sph^2_+\to Y$.

Set $\tilde L:=\Lip(F)$ and $L:=\Lip(T)$. 
Equivariance shows that the restriction of $F$ to the other hemisphere also
has Lipschitz constant $L$.  For $z,z'$ in different hemispheres, let $q$ be
an intersection of a minimizing geodesic from $z$ to $z'$ with the equator.  Then 
\begin{equation}
	 \dist_X(Fz,Fz')
	\leq L\bigl(\dist_{\mathrm{can}}(z,q)+\dist_{\mathrm{can}}(q,z')\bigr)
	=L\dist_{\mathrm{can}}(z,z'). 
\end{equation}
Together with the same-hemisphere estimate, this yields 
$\tilde L=L\leq1$.

The inclusion and the $1$-Lipschitz folding map $X\to Y$ show that each copy
of $Y$ is isometrically embedded in $X$.  Since $\Hh^2(\partial Y)=0$,
\begin{equation}
	 \Hh^2(X)=2\Hh^2(Y),
	\qquad
	a_X=\frac{\Hh^2(X)}{|\Sph^2|}
	=\frac{\Hh^2(Y)}{|\Sph^2_+|}=b_Y. 
\end{equation}
Restricting \eqref{eq:equivariant-alexandrov-measure} and
\eqref{eq:equivariant-alexandrov-bilip} to the corresponding halves yields 
\begin{equation}
	\frac{b_Y}{L}\dist_{\mathrm{can}}(z,z')
	\leq\dist_Y(Tz,Tz')
	\leq L\dist_{\mathrm{can}}(z,z'), \qquad T_\#\sigma_+=\frac{\Hh_Y^2}{\Hh^2(Y)},
\end{equation}
which proves Theorem~\ref{thm:alexandrov-disk}.
\end{proof}

The weighted corollary follows directly from
Theorem~\ref{thm:alexandrov-disk} applied to the canonical conformal metric.

\begin{proof}[Proof of Corollary~\ref{cor:weighted-hemisphere}]
Set $\hat g:=e^{-V}g_0$.  The conformal transformation laws yield 
\begin{equation}
 K_{\hat g}
 =e^V\left(1+\frac12\Delta_0V\right),
 \qquad
 \hat\kappa
 =e^{V/2}\left(\kappa_0-\frac12\partial_\eta V\right).
\end{equation}
Thus \eqref{eq:intro-weighted-conditions} is equivalent to
$K_{\hat g}\geq1$ and local convexity of $\partial\Omega$ in
$(\Omega,\hat g)$.  Hence $(\Omega,\dist_{\hat g})$ is an Alexandrov
disk of curvature at least one
\cite[Section~2.9, Example~(1)]{BuragoGromovPerelman}.  Since
$\d\vol_{\hat g}=e^{-V}\d\vol_0$ and
$\vol_{\hat g}(\Omega)=Z_V$, Theorem~\ref{thm:alexandrov-disk} gives a
bi-Lipschitz homeomorphism $T:\Sph^2_+\to\Omega$ such that
\begin{equation}
	\frac{Z_V}{2\pi}\dist_{\mathrm{can}}(z,z')
	\leq\dist_{\hat g}(Tz,Tz')
	\leq\dist_{\mathrm{can}}(z,z'), \qquad  T_\#\sigma_+
	=\frac{\d\vol_{\hat g}}{Z_V}
	=\frac{e^{-V}}{Z_V}\d\vol_0. 
\end{equation}
Since $m_V\leq V\leq0$, for all $x,y\in\Omega$,
\begin{equation}
 e^{m_V/2}\dist_{\hat g}(x,y)
 \leq\dist_{0,\Omega}(x,y)
 \leq\dist_{\hat g}(x,y).    
\end{equation}
Combining this estimate with the preceding display gives
\eqref{eq:intro-weighted-bilip} and completes the proof.
\end{proof}

\bigskip
\noindent\textbf{Acknowledgments.}
The authors thank Guoyi Xu for his interest in this work and for valuable
suggestions.  S. Aryan thanks his advisor, Tobias Colding, for his support and
for many insightful conversations, and Tang-Kai Lee for helpful conversations
about geometric flows.

\medskip
\noindent\textbf{Declaration.}
The authors declare no competing interests.  No datasets were generated or
analyzed in this study.

\medskip
\noindent\textbf{Funding.}
B.-X. Han and Z.-N. Zhu were supported in part by the National Key R\&D Program
of China (2021YFA1000900, 2021YFA1002200), the Shandong Provincial Natural
Science Foundation (ZR2025QB05), and the Taishan Scholars Program of Shandong
Province (tsqn202408059).  S.~Aryan was supported in part by NSF Grant
DMS-2405393 and a Simons Dissertation Fellowship.


\begin{thebibliography}{99}

\bibitem{Andrews}
B.~Andrews,
\emph{Pinching estimates and motion of hypersurfaces by curvature functions},
J. Reine Angew. Math. \textbf{608} (2007), 17--33.

\bibitem{Aryan}
S.~Aryan,
\emph{Spectral obstructions to contracting transport maps on curved spaces},
arXiv:2605.24705v1 (2026), accessed July 28, 2026.

\bibitem{BakriCritical}
L.~Bakri,
\emph{Critical sets of eigenfunctions of the Laplacian},
J. Geom. Phys. \textbf{62} (2012), no.~10, 2024--2037,
doi:10.1016/j.geomphys.2012.05.006.

\bibitem{BeckJerison}
T.~Beck and D.~Jerison,
\emph{The Friedland--Hayman inequality and Caffarelli's contraction theorem},
J. Math. Phys. \textbf{62} (2021), no.~10, Article No.~101504,
doi:10.1063/5.0046058.

\bibitem{Bertrand}
J.~Bertrand,
\emph{Existence and uniqueness of optimal maps on Alexandrov spaces},
Adv. Math. \textbf{219} (2008), no.~3, 838--851.

\bibitem{BuragoBuragoIvanov}
D.~Burago, Y.~Burago, and S.~Ivanov,
\emph{A Course in Metric Geometry},
Graduate Studies in Mathematics, vol.~33,
American Mathematical Society, Providence, RI, 2001.

\bibitem{BuragoGromovPerelman}
Yu.~D. Burago, M.~L. Gromov, and G.~Ya. Perel'man,
\emph{A.~D. Alexandrov spaces with curvature bounded below},
Russian Math. Surveys \textbf{47} (1992), no.~2, 1--58,
doi:10.1070/RM1992v047n02ABEH000877.

\bibitem{Caffarelli}
L.~A. Caffarelli,
\emph{Monotonicity properties of optimal transportation and the FKG and
related inequalities},
Comm. Math. Phys. \textbf{214} (2000), no.~3, 547--563.

\bibitem{carlier2024optimal}
G.~Carlier, A.~Figalli, and F.~Santambrogio,
\emph{On optimal transport maps between $1/d$-concave densities},
Ann. Inst. H. Poincar\'e Anal. Non Lin\'eaire \textbf{43} (2026), no.~2,
483--500,
doi:10.4171/AIHPC/148.

\bibitem{ChenLi}
G.-Q.~G. Chen and S.~Li,
\emph{Global weak rigidity of the Gauss--Codazzi--Ricci equations and
isometric immersions of Riemannian manifolds with lower regularity},
J. Geom. Anal. \textbf{28} (2018), no.~3, 1957--2007,
doi:10.1007/s12220-017-9893-1.

\bibitem{chewi2023entropic}
S.~Chewi and A.-A.~Pooladian,
\emph{An entropic generalization of Caffarelli's contraction theorem via
covariance inequalities},
C. R. Math. Acad. Sci. Paris \textbf{361} (2023), no.~G9, 1471--1482,
doi:10.5802/crmath.486.

\bibitem{doCarmoWarner}
M.~P. do Carmo and F.~W. Warner,
\emph{Rigidity and convexity of hypersurfaces in spheres},
J. Differential Geom. \textbf{4} (1970), 133--144.

\bibitem{CM}
T.~H.~Colding and W.~P.~Minicozzi,
\emph{Weyl type bounds for harmonic functions},
Inventiones mathematicae. \textbf{131} (1998), no.~2, 257--298.


\bibitem{FathiFradeliziGozlanZugmeyer}
M.~Fathi, M.~Fradelizi, N.~Gozlan, and S.~Zugmeyer,
\emph{Some obstructions to contraction theorems on the half-sphere},
in \emph{High Dimensional Probability X}, Progress in Probability, vol.~82,
Birkh\"auser, Cham, 2026, 149--159,
doi:10.1007/978-3-032-06057-0\_6.

\bibitem{fathi2020proof}
M.~Fathi, N.~Gozlan, and M.~Prod'homme,
\emph{A proof of the Caffarelli contraction theorem via entropic
regularization},
Calc. Var. Partial Differential Equations \textbf{59} (2020), no.~3,
Article No.~96, 18 pp.,
doi:10.1007/s00526-020-01754-0.

\bibitem{fathi2024transportation}
M.~Fathi, D.~Mikulincer, and Y.~Shenfeld,
\emph{Transportation onto log-Lipschitz perturbations},
Calc. Var. Partial Differential Equations \textbf{63} (2024), no.~3,
Article No.~61, 25 pp.,
doi:10.1007/s00526-023-02652-x.

\bibitem{feyel2004monge}
D.~Feyel and A.~S. \"Ust\"unel,
\emph{Monge--Kantorovitch measure transportation and Monge--Amp\`ere equation
on Wiener space},
Probab. Theory Related Fields \textbf{128} (2004), no.~3, 347--385,
doi:10.1007/s00440-003-0307-x.

\bibitem{GeSerres}
Y.~Ge and J.~Serres,
\emph{A generalization of Caffarelli's contraction theorem to nearly
spherical manifolds},
arXiv:2512.01496v1 (2025), accessed July 28, 2026.

\bibitem{GerhardtSphere}
C.~Gerhardt,
\emph{Curvature flows in the sphere},
J. Differential Geom. \textbf{100} (2015), no.~2, 301--347.

\bibitem{GerhardtCurvatureProblems}
C.~Gerhardt,
\emph{Curvature Problems},
Series in Geometry and Topology, vol.~39,
International Press, Somerville, MA, 2006.

\bibitem{GigliHan}
N.~Gigli and B.-X.~Han,
\emph{Independence on $p$ of weak upper gradients on RCD spaces},
J. Funct. Anal. \textbf{271} (2016), no.~1, 1--11,
doi:10.1016/j.jfa.2016.04.014.

\bibitem{GimenoGonzalez}
V.~Gimeno i Garcia and F.~Gonz\'alez-Ib\'a\~nez,
\emph{Evolution of the torsional rigidity under geometric flows},
Potential Anal. \textbf{64} (2026), Article No.~29,
doi:10.1007/s11118-025-10243-y.

\bibitem{Gromov}
M.~Gromov,
\emph{Metric Structures for Riemannian and Non-Riemannian Spaces},
Modern Birkh\"auser Classics,
Birkh\"auser Boston, Boston, MA, 2007,
doi:10.1007/978-0-8176-4583-0.

\bibitem{HoPyo}
P.~T. Ho and J.~Pyo,
\emph{First eigenvalues of free boundary hypersurfaces in the unit ball along
the inverse mean curvature flow},
Differential Geom. Appl. \textbf{93} (2024), Article No.~102095,
doi:10.1016/j.difgeo.2023.102095.

\bibitem{KarpukhinNadirashviliPenskoiPolterovich}
M.~Karpukhin, N.~Nadirashvili, A.~V. Penskoi, and I.~Polterovich,
\emph{An isoperimetric inequality for Laplace eigenvalues on the sphere},
J. Differential Geom. \textbf{118} (2021), no.~2, 313--333,
doi:10.4310/jdg/1622743142.

\bibitem{KimMilman}
Y.-H.~Kim and E.~Milman,
\emph{A generalization of Caffarelli's contraction theorem via (reverse) heat
flow},
Math. Ann. \textbf{354} (2012), no.~3, 827--862,
doi:10.1007/s00208-011-0749-x.

\bibitem{Kirchheim}
B.~Kirchheim,
\emph{Rectifiable metric spaces: local structure and regularity of the
Hausdorff measure},
Proc. Amer. Math. Soc. \textbf{121} (1994), no.~1, 113--123.

\bibitem{Koerber}
T.~K\"orber,
\emph{A free boundary isometric embedding problem in the unit ball},
Calc. Var. Partial Differential Equations \textbf{61} (2022), no.~2,
Article No.~50, 36 pp.,
doi:10.1007/s00526-021-02173-5.

\bibitem{KupfermanMaorShachar}
R.~Kupferman, C.~Maor, and A.~Shachar,
\emph{Reshetnyak rigidity for Riemannian manifolds},
Arch. Ration. Mech. Anal. \textbf{231} (2019), no.~1, 367--408,
doi:10.1007/s00205-018-1282-9.

\bibitem{KuwaeMachigashiraShioya}
K.~Kuwae, Y.~Machigashira, and T.~Shioya,
\emph{Sobolev spaces, Laplacian, and heat kernel on Alexandrov spaces},
Math. Z. \textbf{238} (2001), no.~2, 269--316,
doi:10.1007/s002090100252.

\bibitem{LambertScheuer}
B.~Lambert and J.~Scheuer,
\emph{The inverse mean curvature flow perpendicular to the sphere},
Math. Ann. \textbf{364} (2016), no.~3--4, 1069--1093,
doi:10.1007/s00208-015-1248-2.

\bibitem{LambertScheuerInequality}
B.~Lambert and J.~Scheuer,
\emph{A geometric inequality for convex free boundary hypersurfaces in the
unit ball},
Proc. Amer. Math. Soc. \textbf{145} (2017), no.~9, 4009--4020,
doi:10.1090/proc/13516.

\bibitem{LauretBerger}
E.~A. Lauret,
\emph{The smallest Laplace eigenvalue of homogeneous $3$-spheres},
Bull. Lond. Math. Soc. \textbf{51} (2019), no.~1, 49--69,
doi:10.1112/blms.12213.

\bibitem{LinWangXu}
S.~Lin, H.~Wang, and G.~Xu,
\emph{Eigenvalues on spheres},
arXiv:2607.11544v1 (2026), accessed July 28, 2026.

\bibitem{lopez2025bakry}
P.~L\'opez-Rivera,
\emph{A Bakry--\'Emery approach to Lipschitz transportation on manifolds},
Potential Anal. \textbf{62} (2025), no.~2, 331--353,
doi:10.1007/s11118-024-10138-4.

\bibitem{Lu}
S.~Lu,
\emph{On Weyl's embedding problem in Riemannian manifolds},
Int. Math. Res. Not. IMRN \textbf{2020} (2020), no.~11, 3229--3259.

\bibitem{MakowskiScheuer}
M.~Makowski and J.~Scheuer,
\emph{Rigidity results, inverse curvature flows, and Alexandrov--Fenchel type
inequalities in the sphere},
Asian J. Math. \textbf{20} (2016), no.~5, 869--892.

\bibitem{MarquardtThesis}
T.~Marquardt,
\emph{The inverse mean curvature flow for hypersurfaces with boundary},
Ph.D. thesis, Freie Universit\"at Berlin, 2012,
doi:10.17169/refubium-12953.

\bibitem{mikulincer2024brownian}
D.~Mikulincer and Y.~Shenfeld,
\emph{The Brownian transport map},
Probab. Theory Related Fields \textbf{190} (2024), 379--444,
doi:10.1007/s00440-024-01286-0.

\bibitem{Milman}
E.~Milman,
\emph{Spectral estimates, contractions and hypercontractivity},
J. Spectr. Theory \textbf{8} (2018), no.~2, 669--714,
doi:10.4171/JST/210.

\bibitem{Petrunin}
A.~Petrunin,
\emph{Applications of quasigeodesics and gradient curves},
in \emph{Comparison Geometry}, Math. Sci. Res. Inst. Publ., vol.~30,
Cambridge University Press, Cambridge, 1997, 203--219.

\bibitem{PogorelovElliptic}
A.~V. Pogorelov,
\emph{Topics in the Theory of Surfaces in Elliptic Space},
Gordon and Breach, New York, 1961.

\bibitem{Serres}
J.~Serres,
\emph{Contractive transport maps from $\mathbb S^2$ to nearly spherical
surfaces with positive Ricci curvature},
Nonlinear Anal. \textbf{267} (2026), Article No.~114058,
doi:10.1016/j.na.2026.114058.

\bibitem{Troyanov}
M.~Troyanov,
\emph{On Alexandrov's surfaces with bounded integral curvature},
in \emph{Reshetnyak's Theory of Subharmonic Metrics},
Springer, Cham, 2023, 9--34,
doi:10.1007/978-3-031-24255-7\_2.

\bibitem{Villani}
C.~Villani,
\emph{Optimal Transport: Old and New},
Grundlehren der mathematischen Wissenschaften, vol.~338,
Springer-Verlag, Berlin, 2009,
doi:10.1007/978-3-540-71050-9.

\end{thebibliography}
\end{document}